\providecommand{\U}[1]{\protect\rule{.1in}{.1in}}
\theoremstyle{plain}
\newtheorem{theorem}{Theorem}[subsection]
\newtheorem{corollary}[theorem]{Corollary}
\newtheorem{proposition}[theorem]{Proposition}
\newtheorem{lemma}[theorem]{Lemma}
\theoremstyle{definition}
\newtheorem{definition}[theorem]{Definition}
\newtheorem{example}[theorem]{Example}
\theoremstyle{remark}
\numberwithin{equation}{subsection}
\DeclareMathOperator{\coz}{coz}
\DeclareMathOperator{\cone}{con}
\begin{document}
\title[Truncs]{Truncated abelian lattice-ordered groups II:\\the pointfree (Madden) representation}
\author{Richard N. Ball}
\address{Department of Mathematics, University of Denver, Denver, CO 80208, U.S.A.}
\thanks{File name: \texttt{NewTruncFrame.tex}. }
%\thanks{These are working notes, not meant for distribution. }
\date{28 June 2014}
\subjclass{06D22, 54H10, 54C05 }
\keywords{divisible abelian lattice ordered group, polar, compact Hausdorff space,
regular frame}

\begin{abstract}
This is the second of three articles on the topic of truncation as an
operation on divisible abelian lattice-ordered groups, or simply $\ell
$-groups. This article uses the notation and terminology of the first article
and assumes its results. In particular, we refer to an $\ell$-group with
truncation as a truncated $\ell$-group, or simply a trunc, and denote the
category of truncs with truncation morphisms by $\mathbf{AT}$.

Here we develop the analog for $\mathbf{AT}$ of Madden's pointfree
representation for $\mathbf{W}$, the category of archimedean $\ell$-groups
with designated order unit. More explicitly, for every archimedean trunc $A$
there is a regular Lindel\"{o}f frame $L$ equipped with a designated point
$\ast:L\rightarrow2$, a subtrunc $\widehat{A}$ of $\mathcal{R}_{0}L$, the
trunc of pointed frame maps $\mathcal{O}_{0}\mathbb{R}\rightarrow L$, and a
trunc isomorphism $A\rightarrow\widehat{A}$. A pointed frame map is just a
frame map between frames which commutes with their designated points, and
$\mathcal{O}_{0}\mathbb{R}$ stands for the pointed frame which is the topology
$\mathcal{O}\mathbb{R}$ of the real numbers equipped with the frame map of the
insertion $0 \to \mathbb{R}$. $\left(  L,\ast\right)  $ is
unique up to pointed frame isomorphism with respect to its properties.
Finally, we reprove an important result from the first article, namely that
$\mathbf{W}$ is a non-full monoreflective subcategory of  $\mathbf{AT}$.

\end{abstract}
\maketitle
\tableofcontents

\section{Introduction}

\subsection{A brief synopsis}

We develop the analog for truncs of Madden's pointfree representation for
$\mathbf{W}$, the category of archimedean $\ell$-groups with order unit
(\cite{Madden:1990}, \cite{MaddenVermeer:1990}, \cite{BallHager:1991}). We
begin by showing in Section \ref{Sec:4} that, for an arbitrary trunc $A$, the
truncation kernels form a regular Lindel\"{o}f frame $\mathcal{K}A$. In
Section \ref{Sec:5} we then provide a subtrunc $\underline{A}$ of
$\mathcal{RK}A$ and a trunc isomorphism $\kappa_{A}:A\rightarrow\underline{A}%
$. This much directly parallels Madden's development in $\mathbf{W}$. But here
an obstacle rears its head, an obstacle which is invisible in $\mathbf{W}$.

Although the representation $\kappa_{A}:A\rightarrow\underline{A}%
\leq\mathcal{RK}A$ is faithful, i.e., one-one, it is not functorial, as we
show by example. However, we restore functoriality by the simple stratagem of
attaching a designated point to each frame under consideration and restricting
our attention to the frame maps which respect the points by commuting with
them. This requires a systematic study of the category of pointed frames, and
we carry this out in Section \ref{Sec:1}. The corresponding algebraic
construct is $\mathcal{R}_{0}L$, the trunc of pointed frame maps
$\mathcal{O}_{0}\mathbb{R}\rightarrow L$, where $\mathcal{O}_{0}\mathbb{R}$
designates the pointed frame of the real numbers, the point being the frame
map of the insertion $0 \to \mathbb{R}$. (For instance, if $L$
is compact and regular then $L=OX$ for some compact Hausdorff space $X$ with
designated point $x_{0}$, and $\mathcal{R}_{0}L$ is isomorphic to
$\mathcal{D}_{0}X$, the trunc of continuous almost-finite extended-real valued
functions on $X$ which vanish at $x_{0}$.) The section culminates in the
development of the spectrum $\mathcal{M}A$ of the trunc $A$; it is the pointed
frame obtained by formally adjoining a point to the frame $\mathcal{K}A$ of
truncation kernels.

All that has gone before is preparation for Section \ref{Sec:2}. There we work
out the desired representation $A\rightarrow\widehat{A}\leq\mathcal{R}%
_{0}\mathcal{M}A$, and show it to be faithful and functorial. We conclude the
paper with Section \ref{Sec:3}, in which we show that the unital objects form 
a non-full monoreflective subcategory of the category of truncs,
thereby confirming the proof of the same fact in \cite{Ball:2013}.

For background on rings of continuous functions, we direct the reader to
Gillman and Jerison's masterpiece \cite{GillmanJerison:1960}, and for a general
reference on $\ell$-groups, to Darnel's fine text \cite{Darnel:1995}. 

\subsection{The basic definitions}

For the purposes of this article, we define truncation as follows.

\begin{definition}
\label{Def:3}A \emph{truncation } on an $\ell$-group $A$ is a unary operation
$A^{+}\rightarrow A^{+}$, written $a\longmapsto\overline{a}$, which has the
following properties for all $a,b\in A^{+}$.

\begin{enumerate}
\item[($\mathfrak{T}1$)] $a\wedge\overline{b}\leq\overline{a}\leq a$

\item[($\mathfrak{T}2$)] If $\overline{a}=0$ then $a=0$.

\item[($\mathfrak{T}3$)] If $na=\overline{na}$ for all $n$ then $a=0$.

\item[($\mathfrak{T}4$)] $\bigcap_{\mathbb{N}}a\ominus n^{\bot\bot}=0$ for all
$a\in A^{+}$. The symbol $a\ominus n$ stands for $n\left(  \left(  a/n\right)
\ominus1\right)  =a-n\overline{a/n}$. This axiom can therefore be formulated as

\item[($\mathfrak{T}4^{\prime}$)] $\forall~b>0~\exists~c~\exists~n~\left(
0<c\leq b\text{ and }c\wedge a\ominus n=0\right)  .~$
\end{enumerate}

\noindent\noindent An $\ell$-group equipped with a truncation is called a
\emph{truncated }$\ell$\emph{-group}, or \emph{trunc} for short. A
\emph{truncation morphism} is an $\ell$-homomorphism $f:A\rightarrow B$
between truncs which preserves the truncation, i.e., $f\left(  \overline
{a}\right)  =\overline{f\left(  a\right)  }$ for all $a\in A^{+}$. We denote
the category of truncs with truncation morphisms by $\mathbf{AT}$.
\end{definition}

The definition of truncation in \cite{Ball:2013} requires only the first three
axioms; the fourth appears in that article as the requirement that each
element of $A^{+}$ be archimedean, and it follows from \cite[5.3.1]{Ball:2013}
that $A$ itself is archimedean. In summary, we get the following.

\begin{proposition}
The following are equivalent for an $\ell$-group $A$.

\begin{enumerate}
\item $A$ is truncated in the sense of Definition \ref{Def:3}.

\item $A$ is archimedean and truncated in the sense of \cite{Ball:2013}.

\item $A$ is archimedean and satisfies ($\mathfrak{T}1$) and ($\mathfrak{T}2$).
\end{enumerate}
\end{proposition}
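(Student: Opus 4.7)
My plan is to establish the cyclic implications $(1) \Rightarrow (2) \Rightarrow (3) \Rightarrow (1)$.

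For $(1) \Rightarrow (2)$, I would rely on the observation in the paragraph preceding the statement: axioms $\mathfrak{T}1$, $\mathfrak{T}2$, $\mathfrak{T}3$ together are exactly the axioms for truncation in \cite{Ball:2013}, and axiom $\mathfrak{T}4$ (equivalently $\mathfrak{T}4'$) expresses precisely the requirement that every $a \in A^+$ be archimedean in the element-wise sense of that paper. Hence Definition \ref{Def:3} is precisely the statement that $A$ is a trunc in the sense of \cite{Ball:2013} in which every positive element is archimedean. An appeal to \cite[5.3.1]{Ball:2013} then upgrades this to the $\ell$-group-level archimedean property of $A$, giving (2). The implication $(2) \Rightarrow (3)$ is immediate, since $\mathfrak{T}1$ and $\mathfrak{T}2$ are among the three axioms that constitute the old notion of truncation.

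The substantive step is $(3) \Rightarrow (1)$, in which one must derive $\mathfrak{T}3$ and $\mathfrak{T}4$ from archimedeanness of $A$ together with only $\mathfrak{T}1$ and $\mathfrak{T}2$. For $\mathfrak{T}3$, I would begin by assuming $na = \overline{na}$ for every $n$ and applying $\mathfrak{T}1$ with the choice $b := na$, which yields $a = a \wedge na = a \wedge \overline{na} \leq \overline{a}$; combined with $\overline{a} \leq a$, this gives $a = \overline{a}$, and the same trick with $na$ in place of $a$ yields $\overline{a/n} = a/n$ and analogous fixed-point identities. Iterating on $n$ and combining this fixed-point behavior with the archimedean property of $A$ should then force $a = 0$. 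For $\mathfrak{T}4$, I would argue in the form $\mathfrak{T}4'$: given $b > 0$, unwind $a \ominus n = a - n\overline{a/n}$ and use the $\ell$-group archimedean property to produce, for sufficiently large $n$, a positive witness $c \leq b$ with $c \wedge (a \ominus n) = 0$.

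The main obstacle will be the $\mathfrak{T}3$ portion of $(3) \Rightarrow (1)$: converting the equation $na = \overline{na}$ into the conclusion $a = 0$ via only $\mathfrak{T}1$, $\mathfrak{T}2$, and $\ell$-group archimedeanness requires extracting a uniform bound on $\{na\}_n$ from otherwise unbounded fixed-point data, and this is where the three axioms give the least leverage. The $\mathfrak{T}4$ portion should be more routine given the explicit form of $a \ominus n$ together with the $\ell$-group-level archimedean property of $A$.
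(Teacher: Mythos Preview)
Your cycle $(1) \Rightarrow (2) \Rightarrow (3) \Rightarrow (1)$ is close in spirit to the paper's argument, but the paper organizes the proof as two separate equivalences rather than a cycle: $(1) \Leftrightarrow (2)$ is taken wholesale from the preceding discussion, and $(2) \Leftrightarrow (3)$ is handled on its own. This matters because in the paper's decomposition the only nontrivial step is $(3) \Rightarrow (2)$, which requires recovering $(\mathfrak{T}3)$ alone; once $(2)$ is in hand, $(\mathfrak{T}4)$ comes for free from the already-established implication $(2) \Rightarrow (1)$. Your plan to verify $(\mathfrak{T}4')$ directly from archimedeanness is therefore an unnecessary detour.

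More to the point, the paper does \emph{not} attempt the direct derivation of $(\mathfrak{T}3)$ that you correctly flag as the main obstacle. It simply observes that an element $a$ with $na = \overline{na}$ for all $n$ is an infinitesimal in the sense of \cite{Ball:2013}, notes that an archimedean trunc has no infinitesimals, and cites \cite[5.3.1]{Ball:2013} for that fact. Your concern about extracting a uniform bound on $\{na\}_n$ from $(\mathfrak{T}1)$ and $(\mathfrak{T}2)$ alone is well-placed: those two axioms do not hand you a single element dominating every $na$, so a bare-hands argument is not straightforward, and your partial step (deducing $a = \overline{a}$ via $(\mathfrak{T}1)$ with $b = na$) does not close the gap. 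The paper's resolution is to outsource precisely this step to the companion article rather than reprove it; you should do the same.
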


\begin{proof}
The discussion prior to the proposition establishes the equivalence of the
first two conditions. The equivalence of the second and third is just the
observation that an archimedean trunc has no infinitesimals and therefore
satisfies ($\mathfrak{T}3$). \cite[5.3.1]{Ball:2013}.
\end{proof}

We reiterate two points for emphasis.

\begin{itemize}
\item \emph{The truncs considered in this article are truncs in the sense of
\cite{Ball:2013}.} As a result, all the descriptive results from
\cite{Ball:2013}, including the identities in Section 3.3, apply to the more
specialized truncs considered here.

\item \emph{The truncs considered in this article are necessarily
archimedean.}
\end{itemize}

\begin{definition}
A trunc $A$ is said to be \emph{unital with unit }$u$ if $\overline{a}=a\wedge
u$ for all $a\in A^{+}$. We denote by $\mathbf{W}$ the full subcategory of
$\mathbf{AT}$ comprised of the unital truncs.
\end{definition}

\section{Truncation ideals\label{Sec:4}}

We have set before us the task of developing a representation theory for
$\mathbf{AT}$ generalizing the pointfree Madden representation for
$\mathbf{W}$. The universal objects for the latter are of the form
$\mathcal{R}M$, the $\mathbf{W}$-object of frame maps $\mathcal{O}%
\mathbb{R}\rightarrow M$ for some frame $M$. In fact, an arbitrary
$\mathbf{W}$-object $A$ is captured as a subobject of $\mathcal{RM}A$, where
$\mathcal{M}A$, the \emph{Madden frame of }$A$, is the frame of $\mathbf{W}%
$-kernels of $A$. It is therefore to be expected that the frame of kernels of
the morphisms of $\mathbf{AT}$ plays a central role in the representation of
truncs. We will refer to such kernels as \emph{truncation kernels}. Our first
task is to develop criteria which will enable us to recognize them.

\subsection{An internal characterization}

An \emph{archimedean kernel} of an $\ell$-group $A$ is the set of elements
sent to zero by an $\ell$-homomorphism into an archimedean codomain. Of the
several known characterizations of such kernels, we will use only the
simplest, which we reprove here in the interests of a self-contained
treatment. We continue to assume that $A$ represents an arbitrary trunc.

\begin{lemma}
\label{Lem:4}A subset $K$ of a trunc $A$ is an archimedean kernel iff it is a
convex $\ell$-subgroup such that, for all $a\in A^{+}$,
\[
\left(  \exists~c\in A^{+}~\forall~n\in\mathbb{N}~\left(  na-c\right)  ^{+}\in
K\right)  \Longrightarrow a\in K.
\]

\end{lemma}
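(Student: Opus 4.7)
The plan is to prove both directions of the biconditional, with the reverse direction proceeding by showing the quotient $A/K$ is archimedean and hence $K$ is the kernel of the quotient map $A \to A/K$.

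For the forward direction, suppose $K = \ker f$ for some $\ell$-homomorphism $f:A \to B$ with $B$ archimedean. Then $K$ is automatically a convex $\ell$-subgroup. Given $a \in A^+$ and $c \in A^+$ with $(na-c)^+ \in K$ for all $n \in \mathbb{N}$, apply $f$ to get $0 = f((na-c)^+) = (nf(a) - f(c))^+$ in $B$, so $nf(a) \leq f(c)$ for every $n$. Since $f(a) \geq 0$ and $B$ is archimedean, $f(a) = 0$, i.e., $a \in K$.

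For the reverse direction, assume $K$ is a convex $\ell$-subgroup satisfying the closure condition. It suffices to show $A/K$ is archimedean, because then $K$ is the kernel of the quotient morphism $A \to A/K$, which lands in an archimedean $\ell$-group. So suppose $0 \leq \alpha$ and $n\alpha \leq \gamma$ in $A/K$ for all $n \in \mathbb{N}$, where $\gamma \in (A/K)^+$. Using that $K$ is a convex $\ell$-subgroup, choose representatives $a \in A^+$ for $\alpha$ and $c \in A^+$ for $\gamma$. The inequality $n\alpha \leq \gamma$ in $A/K$ translates to $(na - c)^+ \in K$ for every $n$. By hypothesis, $a \in K$, so $\alpha = 0$, confirming that $A/K$ is archimedean.

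The only mildly subtle step is the translation between inequalities in $A/K$ and membership statements in $K$ in the reverse direction; this is where convexity of $K$ is used to pick positive representatives. Once that bookkeeping is in place, neither direction requires any of the trunc axioms beyond the $\ell$-group structure of $A$, which is why the characterization is phrased purely in $\ell$-group terms.
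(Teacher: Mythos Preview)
Your proof is correct and follows essentially the same approach as the paper: both arguments reduce the question to the equivalence between the displayed closure condition and the archimedean property of $A/K$, via the translation $(na-c)^{+}\in K \Longleftrightarrow n(K+a)\leq K+c$. The only cosmetic difference is that in the forward direction you work with an arbitrary archimedean codomain $B$ while the paper passes immediately to $A/K$, but the content is the same.
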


\begin{proof}
The quotient group $A/K$ can be endowed with an order making the quotient map
an $\ell$-homomorphism iff $K$ is a convex $\ell$-subgroup. We must show that
the archimedean property of $A/K$ is equivalent to the condition displayed
above. This follows directly from the fact that, for $a,c\in A^{+}$ and
$n\in\mathbb{N}$,
\begin{gather*}
\left(  na-c\right)  ^{+}\in K\Longleftrightarrow K=K+\left(  na-c\right)
\vee0\Longleftrightarrow K+c=\left(  K+na\right)  \vee\left(  K+c\right)
\Longleftrightarrow\\
K+na\leq K+c\Longleftrightarrow n\left(  K+a\right)  \leq K+c. \qedhere
\end{gather*}

\end{proof}

We shall say that a subset $K$ of a trunc $A$ is \emph{absorbing} if
$\overline{a}\in K$ implies $a\in K$ for all $a\in A^{+}$.

\begin{lemma}
\label{Lem:5}A subset $K\subseteq A$ is a truncation kernel iff it is an
absorbing archimedean kernel.
\end{lemma}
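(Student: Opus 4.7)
The plan is to prove both directions via the quotient construction, exhibiting an absorbing archimedean kernel as the kernel of the projection $\pi : A \to A/K$ once $A/K$ has been suitably equipped with a truncation.

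For the forward direction, suppose $K = \ker f$ for a truncation morphism $f : A \to B$. Since $B$ is a trunc and hence archimedean, Lemma \ref{Lem:4} immediately gives that $K$ is an archimedean kernel. The absorbing property then follows at once: if $\overline{a} \in K$ for some $a \in A^+$, then $\overline{f(a)} = f(\overline{a}) = 0$ in $B$, and axiom $(\mathfrak{T}2)$ applied in $B$ forces $f(a) = 0$, so $a \in K$.

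For the converse, I would define a truncation on $A/K$ by setting $\overline{\pi(a)} := \pi(\overline{a})$ for each $a \in A^+$. Because every element of $(A/K)^+$ admits a positive representative in $A$ (take $a^+$), this prescribes a candidate unary operation on $(A/K)^+$, provided it does not depend on the chosen representative. Well-definedness is the principal obstacle: if $a, b \in A^+$ with $a - b \in K$, one must verify $\overline{a} - \overline{b} \in K$. The key tool is the non-expansiveness identity $|\overline{a} - \overline{b}| \leq |a - b|$, which holds in every trunc (see the identities in Section 3.3 of \cite{Ball:2013}). It can in fact be extracted from $(\mathfrak{T}1)$ alone: $a \wedge \overline{b} \leq \overline{a}$ yields $\overline{b} - \overline{a} \leq \overline{b} - (a \wedge \overline{b}) = (\overline{b} - a)^+ \leq (b - a)^+$, and the symmetric inequality is analogous. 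Convexity of $K$ then delivers $\overline{a} - \overline{b} \in K$.

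Once well-definedness is in hand, the remaining verifications are routine. Axiom $(\mathfrak{T}1)$ descends term by term from $A$, and $(\mathfrak{T}2)$ is precisely the absorbing hypothesis, since $\overline{\pi(a)} = 0$ forces $\overline{a} \in K$, hence $a \in K$ and $\pi(a) = 0$. Archimedeanness of $A/K$ is supplied by Lemma \ref{Lem:4}, and the Proposition preceding Lemma \ref{Lem:5} then upgrades archimedeanness together with $(\mathfrak{T}1)$ and $(\mathfrak{T}2)$ to the full list of trunc axioms of Definition \ref{Def:3}. By construction $\pi$ preserves truncation, so $K$ is the kernel of the truncation morphism $\pi$ into a trunc, as required.
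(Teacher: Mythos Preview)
Your proof is correct and follows essentially the same route as the paper: both directions go through the quotient, with well-definedness of the induced truncation secured by the non-expansiveness inequality $|\overline{a}-\overline{b}|\leq|a-b|$ (the paper cites \cite[Lemma 3.3.1(4)]{Ball:2013} for this, whereas you derive it from $(\mathfrak{T}1)$). Your use of the equivalence in Proposition~1.2.2 to reduce the axiom check to $(\mathfrak{T}1)$, $(\mathfrak{T}2)$, and archimedeanness is a tidy shortcut over the paper's ``straightforward to verify,'' but the argument is otherwise the same.
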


\begin{proof}
We have already mentioned that truncs are archimedean, hence a truncation
kernel must be an archimedean kernel. And the absorbing property of a
truncation kernel $K$ is clearly necessitated by the fact that $A/K$ must
satisfy truncation axiom ($\mathfrak{T}2$). On the other hand, if $K$ is an
absorbing archimedean kernel then define
\[
\overline{K+a}=K+\overline{a},\;a\in A^{+}.
\]
This is the only definition of truncation on $A/K$ which makes the quotient
map into a truncation morphism. Moreover, the truncation is well-defined just
because $\left\vert \overline{a}-\overline{b}\right\vert \leq\left\vert
a-b\right\vert $, $a,b\in A^{+}$, by \cite[Lemma 3.3.1(4)]{Ball:2013}. It is
then straightforward to verify that the operation thus defined satisfies the
truncation axioms.
\end{proof}

\begin{corollary}
The truncation kernels of a $\mathbf{W}$-object coincide with its $\mathbf{W}
$-kernels.
\end{corollary}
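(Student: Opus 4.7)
The plan is to use Lemma \ref{Lem:5} together with the fact that $\mathbf{W}$-morphisms preserve the designated unit in order to establish both inclusions directly. Let $A$ be a $\mathbf{W}$-object with unit $u$, so that $\overline{a}=a\wedge u$ for all $a\in A^{+}$.

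First I would show that every $\mathbf{W}$-kernel is a truncation kernel. If $f:A\to B$ is a $\mathbf{W}$-morphism (so $f(u)=u_{B}$), then for any $a\in A^{+}$,
\[
f\bigl(\overline{a}\bigr)=f(a\wedge u)=f(a)\wedge f(u)=f(a)\wedge u_{B}=\overline{f(a)},
\]
so $f$ is automatically a truncation morphism, and hence $\ker f$ is a truncation kernel.

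Conversely, suppose $K$ is a truncation kernel of $A$. By Lemma \ref{Lem:5} the quotient map $\pi:A\to A/K$ is a truncation morphism, so $A/K$ is an archimedean trunc. For any $a\in A^{+}$,
\[
\overline{\pi(a)}=\pi\bigl(\overline{a}\bigr)=\pi(a\wedge u)=\pi(a)\wedge\pi(u),
\]
which shows that $A/K$ is unital with designated unit $\pi(u)$ and that $\pi$ is a $\mathbf{W}$-morphism. Hence $K=\ker\pi$ is a $\mathbf{W}$-kernel.

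There is no real obstacle here; the content of the corollary is already embedded in Lemma \ref{Lem:5} once one observes that, in the unital setting, the truncation is expressible as a meet with $u$, so that preservation of truncation and preservation of $u$ by the quotient are automatic from convexity and the definition of $\overline{(\cdot)}$. The only thing worth noting is that a priori a truncation morphism between $\mathbf{W}$-objects need not send $u_{A}$ to $u_{B}$ (which is precisely why $\mathbf{W}$ sits non-fully in $\mathbf{AT}$), but this subtlety does not affect the identification of kernels, since the quotient $\pi:A\to A/K$ can always be arranged to send $u$ to the unit of $A/K$.
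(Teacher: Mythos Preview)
Your argument is correct. It differs from the paper's proof mainly in presentation: the paper invokes the known internal characterization of a $\mathbf{W}$-kernel as an archimedean kernel $K$ satisfying $a\wedge u\in K\Rightarrow a\in K$, and then simply observes that since $\overline{a}=a\wedge u$ in a $\mathbf{W}$-object, this condition is literally the absorbing property of Lemma~\ref{Lem:5}. You instead argue externally, at the level of morphisms, showing that $\mathbf{W}$-morphisms are truncation morphisms and that the quotient by a truncation kernel is itself a $\mathbf{W}$-morphism onto a unital quotient. Your route is slightly longer but has the advantage of not presupposing the internal description of $\mathbf{W}$-kernels; the paper's route is a one-line identification once that description is granted. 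Your closing remark about non-fullness is apt and matches the paper's later Example~\ref{Ex:1}.
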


\begin{proof}
A $\mathbf{W}$-kernel is just an archimedean kernel $K$ such that $a\wedge1\in
K$ forces $a\in K$ for all $a\in A^{+}$. But the truncation in a $\mathbf{W}%
$-object is taken to be $\overline{a}=a\wedge1$, so the latter condition is
exactly the absorbing property of $K$.
\end{proof}

\subsection{An external characterization}

We use $\mathcal{K}A$ to designate the family of truncation ideals of $A$,
considered as a lattice in the inclusion order. Our task for the remainder of
the section is to show that $\mathcal{K}A$ is a regular Lindel\"{o}f frame;
such a frame is completely regular. This we do in Theorem \ref{Thm:1}.

First, observe that $\mathcal{K}A$ is closed under intersection, a consequence
of the fact that $\mathbf{AT}$ is evidently closed under products. This leads
to the notion of the truncation kernel generated by a subset $B\subseteq A$,
which we denote%
\[
\left[  B\right]  \equiv\bigcap\left\{  K:\mathcal{K}A\ni K\supseteq
B\right\}  .
\]

It is helpful to have an external description of $\left[  B\right]  $, and
such a description requires a little terminology. Every ordinal number
$\alpha$ can be expressed in the form $\alpha=\beta+k$ for a unique finite
ordinal $k$ and limit ordinal $\beta$. (We take $\beta=0$ to be a limit
ordinal.). We will say that $\alpha$\emph{ is even or odd} depending on
whether $k$ is even or odd.

For a subset $B\subseteq A$, let $\left\langle B\right\rangle $ designate the
convex $\ell$-subgroup generated by $B$. Now define%
\begin{align*}
B^{0}  &  \equiv\left\langle B\right\rangle ,\\
B^{\alpha+1}  &  \equiv\left\langle a\in A^{+}:\exists~c\in A^{+}~\forall
~n\in\mathbb{N}~\left(  n\left\vert a\right\vert -c\right)  ^{+}\in B^{\alpha
}\right\rangle ,\;\alpha\text{ even,}\\
B^{\alpha+1}  &  \equiv\left\langle a\in A^{+}:\overline{\left\vert
a\right\vert }\in B^{\alpha}\right\rangle ,\;\alpha\text{ odd,}\\
B^{\beta}  &  \equiv\bigcup_{\alpha<\beta}B^{\alpha}\text{, }\beta\text{ a
limit ordinal,}\\
B^{\infty}  &  \equiv B^{\alpha}\text{ for some (any) }\alpha\text{ such that
}B^{\alpha}=B^{\alpha+1}\text{.}%
\end{align*}

\begin{lemma}
\label{Lem:24}$\left[  B\right]  =B^{\infty}=B^{\omega_{1}}$ for any subset
$B\subseteq A$.
\end{lemma}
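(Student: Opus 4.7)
The plan is to set up a standard transfinite induction: show the sequence $B^{\alpha}$ is weakly increasing, show $B^{\omega_{1}}$ is closed under both the archimedean closure and the absorbing closure operations (using the uncountable cofinality of $\omega_{1}$), conclude $B^{\infty}=B^{\omega_{1}}$, and finally identify this set with $[B]$ via Lemmas \ref{Lem:4} and \ref{Lem:5}.

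First I would verify that $B^{\alpha}\subseteq B^{\alpha+1}$ at every successor stage. For $\alpha$ even this is immediate: given $a\in B^{\alpha}$, take $c=0$, so $(n|a|-c)^{+}=n|a|\in B^{\alpha}$ for all $n$ since $B^{\alpha}$ is a convex $\ell$-subgroup closed under integer multiplication; hence $a\in B^{\alpha+1}$. For $\alpha$ odd, given $a\in B^{\alpha}$ we have $|a|\in B^{\alpha}$ and $\overline{|a|}\leq|a|$, so convexity gives $\overline{|a|}\in B^{\alpha}$, hence $a\in B^{\alpha+1}$. Limit stages are unions, so the whole transfinite chain is weakly increasing, and each $B^{\alpha}$ is a convex $\ell$-subgroup of $A$ (a directed union of convex $\ell$-subgroups at limits, and generated as one at successors).

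Next I would prove the key cofinality step: $B^{\omega_{1}}=B^{\omega_{1}+1}=B^{\omega_{1}+2}$. Since $\omega_{1}$ is even, $B^{\omega_{1}+1}$ is the archimedean closure of $B^{\omega_{1}}$: given $a\in A^{+}$ with $(n|a|-c)^{+}\in B^{\omega_{1}}$ for every $n\in\mathbb{N}$, each $(n|a|-c)^{+}$ lies in some $B^{\alpha_{n}}$ with $\alpha_{n}<\omega_{1}$. Because $\omega_{1}$ has uncountable cofinality and $\mathbb{N}$ is countable, $\alpha\equiv\sup_{n}\alpha_{n}<\omega_{1}$; replacing $\alpha$ by the next even ordinal (still $<\omega_{1}$) we get $a\in B^{\alpha+1}\subseteq B^{\omega_{1}}$. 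The generating set for $B^{\omega_{1}+1}$ therefore already lies in the convex $\ell$-subgroup $B^{\omega_{1}}$, giving equality. A parallel argument, using that the absorbing condition involves only the single element $\overline{|a|}$, gives $B^{\omega_{1}+2}=B^{\omega_{1}+1}$. Hence $B^{\omega_{1}}$ is closed under the successor operation at all later parities, and $B^{\infty}=B^{\omega_{1}}$.

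Finally, I would identify $B^{\omega_{1}}$ with $[B]$. For $B^{\omega_{1}}\supseteq[B]$: the set $B^{\omega_{1}}$ is a convex $\ell$-subgroup containing $B$, it is archimedean by Lemma \ref{Lem:4} (the verified closure under the archimedean condition), and it is absorbing, hence is a truncation kernel by Lemma \ref{Lem:5}. For $B^{\omega_{1}}\subseteq[B]$: by transfinite induction on $\alpha$, show $B^{\alpha}\subseteq[B]$. The base $B^{0}=\langle B\rangle\subseteq[B]$ is clear; at even successors, the archimedean-kernel property of $[B]$ given by Lemma \ref{Lem:4} pushes the new generators into $[B]$; at odd successors, the absorbing property of $[B]$ from Lemma \ref{Lem:5} does the same; and limits are trivial.

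The only genuinely delicate step is the cofinality argument for $\omega_{1}$; everything else is transfinite bookkeeping. I would take care to separate the even-successor (archimedean) and odd-successor (absorbing) cases cleanly, and to note explicitly that the finitary nature of convex $\ell$-subgroup generation together with the countable character of both closure operations is exactly what forces $\omega_{1}$ — rather than a larger ordinal — to suffice.
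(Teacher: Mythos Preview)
Your proof is correct and is essentially a fully worked-out version of the paper's argument: the paper's proof is the single sentence ``This follows directly from Lemma~\ref{Lem:5},'' treating the entire transfinite construction and the $\omega_1$-cofinality argument as routine. You have supplied exactly the details (monotonicity of the chain, countable cofinality of both closure operations forcing stabilization at $\omega_1$, and the two inclusions with $[B]$ via Lemmas~\ref{Lem:4} and~\ref{Lem:5}) that the paper leaves implicit.
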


\begin{proof}
This follows directly from Lemma \ref{Lem:5}.
\end{proof}

Polars are good examples of truncation kernels.

\begin{corollary}
\label{Cor:7}For any subset $B\subseteq A$, the polar
\[
B^{\bot}\equiv\left\{  a:\left\vert a\right\vert \wedge\left\vert b\right\vert
=0\text{ for all }b\in B\right\}
\]
generated by $B$ is a truncation kernel.
\end{corollary}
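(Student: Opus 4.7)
The plan is to invoke Lemma \ref{Lem:5} and show that $B^{\perp}$ is an absorbing archimedean kernel. Since $B^{\perp} = \bigcap_{b \in B}\{|b|\}^{\perp}$ and intersections of truncation kernels are truncation kernels, I reduce to $P := \{|b|\}^{\perp}$ for a single $b \in B$. That $P$ is a convex $\ell$-subgroup is a standard fact about polars. I will use repeatedly the distributive inequality $(u+v) \wedge z \leq (u \wedge z) + (v \wedge z)$ for $u, v, z \geq 0$, which follows from Riesz decomposition in any abelian $\ell$-group.

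To verify the archimedean kernel criterion of Lemma \ref{Lem:4}, suppose $a, c \in A^{+}$ satisfy $(na - c)^{+} \in P$ for every $n$. Any $y \in \{|b|\}^{\perp\perp} \cap A^{+}$ is automatically disjoint from $(na - c)^{+}$ (because $(na-c)^+ \in P$), so combining $na = (na \wedge c) + (na - c)^{+}$ with the distributive inequality yields $y \wedge na \leq c$. Applying this with $y = k(a \wedge |b|) \in \{|b|\}^{\perp\perp}$ and $n = k$, and using $k(a \wedge |b|) \leq ka$ so that $ka \wedge k(a \wedge |b|) = k(a \wedge |b|)$, I conclude $k(a \wedge |b|) \leq c$ for every $k \in \mathbb{N}$. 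Archimedeanness of $A$ then forces $a \wedge |b| = 0$, proving $a \in P$.

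The main obstacle is the absorbing property: from $\overline{a} \wedge |b| = 0$ with $a \in A^{+}$ one must conclude $a \wedge |b| = 0$. Monotonicity of truncation (a consequence of $(\mathfrak{T}1)$, since $a/n \leq a$) gives $\overline{a/n} \leq \overline{a}$, so $\overline{a/n} \in P$ by convexity, and hence $n\overline{a/n} \in P$. Using the identity $a = (a \ominus n) + n\overline{a/n}$ together with the distributive inequality, for any $y \in A^{+}$ with $y \wedge (a \ominus n) = 0$ one computes
\[
y \wedge a \wedge |b| \;\leq\; (y \wedge n\overline{a/n}) \wedge |b| \;=\; 0.
\]
Thus $a \wedge |b| \in (a \ominus n)^{\perp\perp}$ for every $n$, and the trunc-specific axiom $(\mathfrak{T}4)$ delivers $a \wedge |b| \in \bigcap_n (a \ominus n)^{\perp\perp} = 0$. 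Lemma \ref{Lem:5} then identifies $P$ as a truncation kernel.
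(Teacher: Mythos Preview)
Your proof is correct. It differs from the paper's argument in packaging, though the underlying verifications are close. The paper simply observes that the transfinite iteration of Lemma~\ref{Lem:24} stabilizes immediately on a polar, i.e., $(B^{\bot})^{\alpha}=B^{\bot}$ for every $\alpha$, and leaves the induction to the reader; unpacking that induction amounts to exactly the two checks you perform (archimedean kernel, absorbing). Your route is more explicit: you reduce to a single-element polar via closure under intersection, verify the criterion of Lemma~\ref{Lem:5} directly, and in particular make visible the role of axiom~($\mathfrak{T}4$) in the absorbing step. The paper's one-line proof is slicker once Lemma~\ref{Lem:24} is in hand; your argument is self-contained and shows concretely why the truncation axiom matters.
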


\begin{proof}
A straightforward induction reveals that $B^{\bot}=B^{\bot\alpha}$ for all
$\alpha$.
\end{proof}

\begin{corollary}
\label{Cor:2}For any $a\in A^{+}$, $\left[  a\right]  ^{\ast}=a^{\bot}$.
Consequently, $\left[  a\right]  ^{\ast\ast}=a^{\bot\bot}$, and it follows
that $a\in\overline{a}^{\bot\bot}$, i.e., $a^{\bot\bot}=\overline{a}^{\bot
\bot}$.
\end{corollary}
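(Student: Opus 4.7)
My plan is to exploit three facts already at hand: (i)~polars are truncation kernels by Corollary~\ref{Cor:7}; (ii)~$\mathcal{K}A$ is closed under intersection, so the pseudocomplement $[a]^{\ast}$ is the largest truncation kernel whose intersection with $[a]$ is trivial; and (iii)~$[a]$ is the smallest truncation kernel containing~$a$. Together these should let me identify $[a]^\ast$ with $a^{\bot}$ directly, and then iterate the argument to identify $[a]^{\ast\ast}$ with $a^{\bot\bot}$.

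First I would establish the containment $[a]\subseteq a^{\bot\bot}$: by Corollary~\ref{Cor:7} the polar $a^{\bot\bot}$ is a truncation kernel, and it trivially contains $a$, so the minimality of $[a]$ gives the inclusion. Since $a^{\bot\bot}\cap a^{\bot}=0$ is immediate (any element of both is orthogonal to itself), this yields $[a]\cap a^{\bot}=0$. Thus $a^{\bot}$ is a truncation kernel disjoint from $[a]$, and the remaining point is its maximality. For that, suppose $K\in\mathcal{K}A$ satisfies $K\cap [a]=0$ and take $x\in K$. Because $[a]$ is a convex $\ell$-subgroup containing $a$, the element $|x|\wedge a$ lies both in $[a]$ (since $0\le |x|\wedge a\le a$) and in $K$ (since $|x|\wedge a\le |x|$ and $K$ is a convex $\ell$-subgroup); hence $|x|\wedge a=0$, so $x\in a^{\bot}$. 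This gives $[a]^{\ast}=a^{\bot}$.

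Next I would compute $[a]^{\ast\ast}=(a^{\bot})^{\ast}$ by repeating the same pattern one level up. The polar $a^{\bot\bot}$ is a truncation kernel disjoint from $a^{\bot}$, so $a^{\bot\bot}\subseteq(a^{\bot})^{\ast}$. Conversely, if $K\in\mathcal{K}A$ satisfies $K\cap a^{\bot}=0$ and $x\in K$, $y\in a^{\bot}$, then $|x|\wedge|y|$ lies in $K\cap a^{\bot}$ (using that both are $\ell$-subgroups), hence equals $0$; this says $x\in(a^{\bot})^{\bot}=a^{\bot\bot}$. So $(a^{\bot})^{\ast}=a^{\bot\bot}$.

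Finally, for the equality $a^{\bot\bot}=\overline{a}^{\bot\bot}$ I would argue that $[a]=[\overline{a}]$. Indeed $\overline{a}\le a$ gives $\overline{a}\in[a]$ and hence $[\overline{a}]\subseteq[a]$, while the absorbing property of the truncation kernel $[\overline{a}]$ together with $\overline{a}\in[\overline{a}]$ forces $a\in[\overline{a}]$, so $[a]\subseteq[\overline{a}]$. Applying the already established identity $[b]^{\ast\ast}=b^{\bot\bot}$ to $b=a$ and to $b=\overline{a}$ then gives $a^{\bot\bot}=[a]^{\ast\ast}=[\overline{a}]^{\ast\ast}=\overline{a}^{\bot\bot}$, and in particular $a\in a^{\bot\bot}=\overline{a}^{\bot\bot}$. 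The main (and really only) obstacle is the maximality clause of the first step, but the reduction of $x\in K$ to its truncate $|x|\wedge a$, which automatically sits in $[a]$, handles it cleanly.
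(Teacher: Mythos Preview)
Your proof is correct and follows essentially the same route as the paper's, which simply invokes that $a^{\bot}$ is already the pseudocomplement of $\langle a\rangle$ in the lattice $\mathcal{L}A$ of convex $\ell$-subgroups and hence, being a truncation kernel, must be the pseudocomplement of $[a]$ in $\mathcal{K}A$; your maximality argument is precisely the unpacking of that observation. The paper's final line $a\in[a]=[\overline{a}]\le[\overline{a}]^{\ast\ast}=\overline{a}^{\bot\bot}$ is exactly your step~5.
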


\begin{proof}
Since $a^{\bot}$ is the pseudocomplement of $\left\langle a\right\rangle $ in
the lattice $\mathcal{L}A$ of convex $\ell$-subgroups of $A$, the fact that it
is a truncation kernel implies that $a^{\bot}$ must be the pseudocomplement of
$\left[  a\right]  $ in the lattice $\mathcal{K}A$ of truncation kernels of
$A$. And we have $a\in\left[  a\right]  = \left[  \overline{a}\right]
\leq\left[  \overline{a}\right]  ^{\ast\ast}= \overline{a}^{\bot\bot}.$
\end{proof}

\begin{lemma}
\label{Lem:13}For convex $\ell$-subgroups $A_{i}\subseteq A$, $\left[
A_{1}\right]  \cap\left[  A_{2}\right]  =\left[  A_{1}\cap A_{2}\right]  $.
\end{lemma}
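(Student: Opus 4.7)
One direction, $[A_1\cap A_2]\subseteq[A_1]\cap[A_2]$, is immediate from the monotonicity of $[\,\cdot\,]$. For the reverse inclusion the plan is to exploit the transfinite construction from Lemma~\ref{Lem:24}, proving by induction on $\alpha$ the uniform statement
\[
A_1^\alpha\cap C\;\subseteq\;[A_1\cap C]
\]
for every convex $\ell$-subgroup $C$ of $A$. The base case $\alpha=0$ collapses to $A_1\cap C\subseteq[A_1\cap C]$ since $A_1$ is already a convex $\ell$-subgroup, and limit stages are handled by the union definition of $A_1^\beta$.

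For the even successor step, fix $0\leq b\in A_1^{\alpha+1}\cap C$ and dominate it by a sum $g_1+\cdots+g_k$ of generators carrying witnesses $c_i\in A^+$ with $(ng_i-c_i)^+\in A_1^\alpha$ for all $n$. Setting $c=\sum c_i$, the standard $\ell$-group inequality $(\sum x_i)^+\leq\sum x_i^+$ yields $(nb-c)^+\in A_1^\alpha$ for each $n$; and since $0\leq(nb-c)^+\leq nb\in C$, convexity of $C$ places these elements in $A_1^\alpha\cap C$, which the inductive hypothesis identifies as a subset of $[A_1\cap C]$. Applying Lemma~\ref{Lem:4} to the truncation kernel $[A_1\cap C]$ then forces $b\in[A_1\cap C]$.

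The odd successor step is the delicate point. Given $0\leq b\in A_1^{\alpha+1}\cap C$ dominated by a sum $g_1+\cdots+g_k$ of generators with $\overline{g_i}\in A_1^\alpha$, the standing truncation identities from \cite[\S 3.3]{Ball:2013} --- monotonicity of $\overline{\,\cdot\,}$ on $A^+$ and subadditivity $\overline{x+y}\leq\overline{x}+\overline{y}$ for $x,y\in A^+$ --- combine to give $\overline{b}\leq\overline{g_1}+\cdots+\overline{g_k}\in A_1^\alpha$. Since also $\overline{b}\in C$ by convexity (as $\overline{b}\leq b\in C$), the inductive hypothesis places $\overline{b}$ in $[A_1\cap C]$, and the absorbing property of Lemma~\ref{Lem:5} promotes this to $b\in[A_1\cap C]$.

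Taking $\alpha=\omega_1$ then gives $[A_1]\cap C\subseteq[A_1\cap C]$ for every convex $\ell$-subgroup $C$. Specializing $C=[A_2]$ yields $[A_1]\cap[A_2]\subseteq[A_1\cap[A_2]]$, and running the symmetric induction with the roles of $A_1$ and $A_2$ exchanged (and $C=A_1$) yields $A_1\cap[A_2]\subseteq[A_1\cap A_2]$, whence $[A_1\cap[A_2]]\subseteq[A_1\cap A_2]$ by monotonicity of $[\,\cdot\,]$. Chaining the two inclusions closes the argument; the only nonroutine ingredient is the pair of truncation identities cited in the odd step, which are inherited from the general trunc theory of \cite{Ball:2013}.
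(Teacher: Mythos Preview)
Your proof is correct. The paper's own argument is terser and more symmetric: it proves by induction on $\alpha$ that $A_1^\alpha \cap A_2^\alpha \subseteq (A_1\cap A_2)^\alpha$ directly, which at $\alpha=\omega_1$ gives the result in a single pass. The inductive steps there require the same ingredients you isolate---subadditivity of $(\cdot)^+$ for the even step, and monotonicity together with subadditivity of truncation for the odd step---so the underlying content is identical.

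Your asymmetric formulation $A_1^\alpha \cap C \subseteq [A_1\cap C]$ (for arbitrary convex $C$) is slightly more general as an intermediate result, but you pay for it with the two-stage chaining at the end (first $C=[A_2]$, then swapping roles with $C=A_1$, then invoking idempotence of $[\cdot]$). The paper's symmetric induction avoids this detour: with $b\geq 0$ in $A_1^{\alpha+1}\cap A_2^{\alpha+1}$, one shows in the even case that $b$ itself satisfies the generating condition for both sides (with witness $c=c_1\vee c_2$), and in the odd case that $\overline{b}\in A_1^\alpha\cap A_2^\alpha$, so the inductive hypothesis places the relevant element directly in $(A_1\cap A_2)^\alpha$. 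Either route is fine; the paper's is just shorter.
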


\begin{proof}
Clearly $\left[  A_{1}\right]  \cap\left[  A_{2}\right]  \supseteq\left[
A_{1}\cap A_{2}\right]  $, and a simple induction can be used to show that
$A_{1}^{\alpha}\cap A_{2}^{\alpha}\subseteq\left(  A_{1}\cap A_{2}\right)
^{\alpha}$ for all $\alpha$. In view of Lemma \ref{Lem:24}, this assertion for
$\alpha=\omega_{1}$ establishes that $\left[  A_{1}\right]  \cap\left[
A_{2}\right]  \subseteq\left[  A_{1}\cap A_{2}\right]  $.
\end{proof}

The reader should be warned that Lemma \ref{Lem:13} is false without the
hypothesis of convexity.

\begin{lemma}
\label{Lem:7}An element $b\in A^{+}$ lies in a truncation kernel $K$ iff
$n\overline{b/n}\in K$ for some $n\in\mathbb{N}$ iff $n\overline{b/n}\in K$
for all $n\in\mathbb{N}$
\end{lemma}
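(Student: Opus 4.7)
The plan is to prove the three equivalent statements by establishing the cyclic chain of implications (a) $\Rightarrow$ (c) $\Rightarrow$ (b) $\Rightarrow$ (a), where (a), (b), (c) denote respectively ``$b \in K$'', ``$n\overline{b/n}\in K$ for some $n$'', and ``$n\overline{b/n}\in K$ for all $n$''. Two features of truncation kernels $K$ do all the work: by Lemma \ref{Lem:5}, $K$ is a convex $\ell$-subgroup (hence closed under the order between positive elements) and absorbing ($\overline{a}\in K$ implies $a\in K$).

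For (a) $\Rightarrow$ (c), I would apply axiom ($\mathfrak{T}1$) to $b/n \in A^{+}$ to obtain $\overline{b/n} \leq b/n$, so that $0 \leq n\overline{b/n} \leq b$. Since $b \in K$ and $K$ is a convex $\ell$-subgroup, $n\overline{b/n} \in K$. The implication (c) $\Rightarrow$ (b) is a tautology.

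For the crucial direction (b) $\Rightarrow$ (a), suppose $n\overline{b/n} \in K$ for some $n$. Because $\overline{b/n} \in A^{+}$, we have $0 \leq \overline{b/n} \leq n\overline{b/n}$, and convexity of $K$ yields $\overline{b/n} \in K$. The absorbing property now delivers $b/n \in K$, and since $K$ is a subgroup (closed under addition), $b = n(b/n) \in K$.

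There really is no obstacle: the lemma is a direct consequence of the internal characterization in Lemma \ref{Lem:5} together with divisibility of $A$ and axiom ($\mathfrak{T}1$). The only thing worth flagging is that the argument uses $b/n \in A$, which is available because the truncs under consideration are divisible abelian $\ell$-groups.
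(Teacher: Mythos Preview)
Your proof is correct and follows essentially the same route as the paper: both arguments use convexity to pass between $b$, $b/n$, $\overline{b/n}$, and $n\overline{b/n}$, and invoke the absorbing property from Lemma~\ref{Lem:5} for the key implication (b)~$\Rightarrow$~(a). The only cosmetic difference is that the paper deduces $\overline{b/n}\in K$ from the chain $b\geq b/n\geq\overline{b/n}\geq 0$ before multiplying by $n$, whereas you multiply first and use $0\leq n\overline{b/n}\leq b$ directly.
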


\begin{proof}
If $K$ is any convex $\ell$-subgroup of $A$ then, for any $n\in\mathbb{N}$,
$b\in K$ implies $\overline{b/n}\in K$ by convexity since $b\geq
b/n\geq\overline{b/n}\geq0$, with the result that $n\overline{b/n}\in K$ for
all $n$. On the other hand, if $K$ is a truncation kernel and $n\overline
{b/n}\in K $ for some $n\in\mathbb{N}$ then $\overline{b/n}\in K$, which
implies $b/n\in K$ by Lemma \ref{Lem:5}, so that $b\in K$.
\end{proof}

\begin{lemma}
\label{Lem:1}For any $a\in A^{+}$,%
\[
\left[  na\ominus1:n\in\mathbb{N}\right]  =\left[  a\ominus\frac{1}{n}%
:n\in\mathbb{N}\right]  =\left[  a\ominus0\right]  =\left[  a\right]  .
\]

\end{lemma}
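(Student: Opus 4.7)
The plan is to show that each of the four principal kernels in the statement coincides with $[a]$.

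The rightmost equality is immediate once we interpret $a\ominus 0$ via the defining formula $a\ominus n = a - n\overline{a/n}$: at $n=0$ the correction term $0\cdot\overline{a/0}$ is taken to vanish, so $a\ominus 0 = a$ and $[a\ominus 0]=[a]$ trivially.

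For the first equality, I would compute directly from the defining formula that
\[
 n\bigl(a\ominus(1/n)\bigr) \;=\; n\bigl(a - (1/n)\overline{na}\bigr) \;=\; na - \overline{na} \;=\; na\ominus 1.
\]
Thus $na\ominus 1$ and $a\ominus(1/n)$ are positive rational multiples of one another, so they generate the same convex $\ell$-subgroup and hence the same principal truncation kernel. Taking the join over $n\in\mathbb{N}$ identifies $[na\ominus 1:n\in\mathbb{N}]$ with $[a\ominus(1/n):n\in\mathbb{N}]$.

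For the middle equality $[a\ominus(1/n):n\in\mathbb{N}] = [a]$, the inclusion $\subseteq$ is immediate: the inequalities $0\leq (1/n)\overline{na}\leq a$ show $(1/n)\overline{na}\in\langle a\rangle\subseteq [a]$ by convexity, hence $a\ominus(1/n)\in [a]$ for each $n$. For the reverse inclusion I would pass to the quotient. Set $K := [a\ominus(1/n):n\in\mathbb{N}]$; by Lemma \ref{Lem:5} the quotient $A/K$ is a trunc with $\overline{K+b}=K+\overline{b}$. The hypothesis $a\ominus(1/n)\in K$, rescaled by $n$ as in the previous paragraph, becomes $na - \overline{na}\in K$; that is, $n(K+a) = \overline{n(K+a)}$ holds in $A/K$ for every $n\in\mathbb{N}$. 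Axiom $\mathfrak{T}3$ applied inside the trunc $A/K$ then forces $K+a = K$, i.e., $a\in K$.

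The main obstacle is this reverse inclusion. Its success rests on recognizing that the family $\{a\ominus(1/n):n\}$ (equivalently, $\{na\ominus 1:n\}$) is precisely the translation of the equations $na=\overline{na}$ across the quotient map, so that $\mathfrak{T}3$ applied in $A/K$ immediately does the work. The rest of the argument is routine manipulation of the definition of $\ominus$.
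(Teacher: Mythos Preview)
Your proof is correct and follows essentially the same route as the paper's: both use the identity $na\ominus 1 = n(a\ominus(1/n))$ to identify the first two kernels, invoke convexity (via $0\le na\ominus 1\le na$) for the inclusion into $[a]$, and obtain the reverse inclusion by passing to the quotient $A/K$ and applying axiom~($\mathfrak{T}3$). Your presentation is a bit more explicitly organized into three separate equalities, but the substance is identical.
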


\begin{proof}
Let $K\equiv\left[  na\ominus1:n\in\mathbb{N}\right]  $. We have
\[
0\leq na\ominus1=n\left(  a\ominus\left(  1/n\right)  \right)  \leq n\left(
a\ominus0\right)  =na,
\]
so that $K\subseteq\left[  a\right]  $. But since $K=K+na\ominus1=K+\left(
na-\overline{na}\right)  $, which is to say that $K+na=K+\overline{na}$ for
all $n$, we also have $a\in K$ because $A/K$ must satisfy ($\mathfrak{T}3$).
\end{proof}

\begin{corollary}
\label{Cor:4}For any $a\in A^{+}$ and $r\in\mathbb{Q}^{+}$,%
\[
\bigvee_{s>r}\left[  a\ominus s\right]  =\left[  a\ominus\left(  r+\frac{1}%
{n}\right)  :n\in\mathbb{N}\right]  =\left[  a\ominus r\right]  .
\]

\end{corollary}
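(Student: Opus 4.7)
My plan is to prove both equalities via a reduction to Lemma \ref{Lem:1} after two monotonicity observations. First, the operation $s \mapsto a \ominus s$ is non-increasing: using the additive identity $(a \ominus r) \ominus t = a \ominus (r+t)$ for positive rationals $r, t$ (one of the standard identities for $\ominus$ listed in \cite[\S 3.3]{Ball:2013}) together with the trivial inequality $c \ominus t \leq c$, one gets $a \ominus s \leq a \ominus r$ whenever $s \geq r$, and therefore $[a \ominus s] \subseteq [a \ominus r]$. This immediately yields the easy half of the outer equality, $\bigvee_{s > r}[a \ominus s] \subseteq [a \ominus r]$, and also the middle equality, since $\{r + 1/n : n \in \mathbb{N}\}$ is coinitial in $\{s \in \mathbb{Q} : s > r\}$: each such $s$ satisfies $s \geq r + 1/n$ for some $n$, so $[a \ominus s] \subseteq [a \ominus (r + 1/n)]$, and the two joins coincide.

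The heart of the proof is the reverse inclusion $[a \ominus r] \subseteq [a \ominus (r + 1/n) : n \in \mathbb{N}]$. Set $b \equiv a \ominus r$; by the additive identity,
\[
a \ominus \bigl(r + \tfrac{1}{n}\bigr) \;=\; (a \ominus r) \ominus \tfrac{1}{n} \;=\; b \ominus \tfrac{1}{n},
\]
so the right-hand kernel equals $[b \ominus 1/n : n \in \mathbb{N}]$. Now $n(b \ominus 1/n) = nb \ominus 1$ by the scaling identity already used in the proof of Lemma \ref{Lem:1}, and since truncation kernels are convex $\ell$-subgroups they contain $c$ exactly when they contain $nc$. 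Hence $[b \ominus 1/n] = [nb \ominus 1]$ for each $n$, and applying Lemma \ref{Lem:1} to $b$ in place of $a$ gives
\[
[b \ominus \tfrac{1}{n} : n \in \mathbb{N}] \;=\; [nb \ominus 1 : n \in \mathbb{N}] \;=\; [b] \;=\; [a \ominus r],
\]
which is the desired reverse inclusion.

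The main obstacle is really just the additive identity $(a \ominus r) \ominus t = a \ominus (r + t)$ for rational $r, t > 0$; everything else is monotonicity together with a scaling-by-$n$ and one invocation of Lemma \ref{Lem:1}. In the unital model the identity reduces to $((a - r)^{+} - t)^{+} = (a - r - t)^{+}$, which is transparent, and in the general trunc setting I would simply cite the corresponding entry from \cite[\S 3.3]{Ball:2013}.
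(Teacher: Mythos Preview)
Your proof is correct and follows essentially the same route as the paper: express $a \ominus (r + 1/n)$ as $(a \ominus r) \ominus 1/n$ via the additive identity for $\ominus$ (cited in the paper as \cite[3.3.6]{Ball:2013}), then apply Lemma~\ref{Lem:1} to $b = a \ominus r$. Your explicit monotonicity and coinitiality remarks, and the scaling step $[b \ominus 1/n] = [nb \ominus 1]$, merely unpack what the paper leaves implicit (the scaling is already built into the statement of Lemma~\ref{Lem:1}).
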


\begin{proof}
Using Lemma 3.3.6 from \cite{Ball:2013}, express $a\ominus\left(
r+1/n\right)  $ as $a\ominus r\ominus\frac{1}{n}$, and apply Lemma \ref{Lem:1}.
\end{proof}

\subsection{The frame of truncation kernels}

\begin{proposition}
\label{Prop:3}$\mathcal{K}A$ is a frame, and the frame operations are
\[
K_{1}\wedge K_{2}=K_{1}\cap K_{2}\;\text{and \ }\bigvee_{I}K_{i}=\left[
K_{i}:i\in I\right]  ,\;\left\{  K_{i}:i\in I\right\}  \subseteq\mathcal{K}A.
\]
The pseudocomplemented elements of $\mathcal{K}A$ are the polars, i.e.,
$K^{\ast}=K^{\bot}$ for $K\in\mathcal{K}A$. Moreover, for $a_{i}\in A^{+}$,
\[
\left[  a_{1}\right]  \cap\left[  a_{2}\right]  =\left[  a_{1}\wedge
a_{2}\right]  \;\text{and\ }\;\left[  a_{1}\right]  \cup\left[  a_{2}\right]
=\left[  a_{1}\vee a_{2}\right]  .
\]

\end{proposition}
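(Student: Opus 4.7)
My plan is to verify in sequence: $\mathcal{K}A$ is a complete lattice with the stated operations, the frame distributive law, $K^\ast = K^\perp$, and the two identities for principal truncation kernels. The first point is immediate from the already-noted closure of $\mathcal{K}A$ under arbitrary intersection: meets coincide with intersection, so the poset is a complete lattice, and the join of a family $\{K_i\}$ must therefore be the smallest truncation kernel containing $\bigcup_i K_i$, namely $[K_i : i \in I]$.

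The main obstacle is the frame distributive law $K \cap \bigvee_i K_i \subseteq \bigvee_i (K \cap K_i)$, since the reverse containment is automatic. Here I would apply Lemma \ref{Lem:13} to the two convex $\ell$-subgroups $K$ and $\langle \bigcup_i K_i \rangle$, after observing that $[\langle \bigcup_i K_i \rangle] = [\bigcup_i K_i]$, to collapse the left-hand side to $[K \cap \langle \bigcup_i K_i \rangle]$. Any positive element $a$ of this intersection satisfies $a \leq k_1 + \cdots + k_n$ with $k_j \in K_{i_j}^+$, and the Riesz decomposition property, valid in any abelian $\ell$-group, furnishes $a = a_1 + \cdots + a_n$ with $0 \leq a_j \leq k_j$. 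Convexity then places each $a_j$ in $K \cap K_{i_j}$, so $a$ lies in $\bigvee_i (K \cap K_i)$ as required.

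The remaining items are straightforward. For $K^\ast = K^\perp$, Corollary \ref{Cor:7} already supplies $K^\perp \in \mathcal{K}A$; the complementation $K \cap K^\perp = 0$ comes from applying the polar condition with $b = a$, and the maximality of $K^\perp$ is immediate from its defining property (if $K' \cap K = 0$ and $a \in K'$, then $|a| \wedge |b| \in K' \cap K$ vanishes for every $b \in K$). For $[a_1] \cap [a_2] = [a_1 \wedge a_2]$, Lemma \ref{Lem:13} reduces matters to the inclusion $\langle a_1 \rangle \cap \langle a_2 \rangle \subseteq \langle a_1 \wedge a_2 \rangle$, which follows from the identity $na_1 \wedge na_2 = n(a_1 \wedge a_2)$ in any abelian $\ell$-group. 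The join identity $[a_1] \vee [a_2] = [a_1 \vee a_2]$ is a pure convex-$\ell$-subgroup calculation: $[a_1 \vee a_2]$ contains both $a_i$ by convexity, while $[a_1] \vee [a_2]$, being a convex $\ell$-subgroup containing both $a_i$, contains their supremum.
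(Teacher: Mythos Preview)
Your proof is correct and follows essentially the same route as the paper: both arguments use Lemma~\ref{Lem:13} to transfer the infinite distributive law from the frame $\mathcal{L}A$ of convex $\ell$-subgroups to $\mathcal{K}A$, and both reduce the principal-kernel identities to their $\mathcal{L}A$ analogs $\langle a_1\rangle\wedge\langle a_2\rangle=\langle a_1\wedge a_2\rangle$ and $\langle a_1\rangle\vee\langle a_2\rangle=\langle a_1\vee a_2\rangle$. The only difference is presentational: the paper simply cites the frame law in $\mathcal{L}A$ as known, whereas you unpack it explicitly via Riesz decomposition, making your argument slightly more self-contained.
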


\begin{proof}
Binary meets distribute across arbitrary joins in $\mathcal{K}A$ because the
same is true in the frame $\mathcal{L}A$ of convex $\ell$-subgroups of $A$,
and the linkage between the two is provided by Lemma \ref{Lem:13}. In detail,
for truncation kernels $K$, $K_{i}$, $i\in I$, we have
\[
K\wedge\bigvee_{I}K_{i}=K\cap\left[  \bigsqcup_{I}K_{i}\right]  =\left[
K\cap\bigsqcup_{I}K_{i}\right]  =\left[  \bigsqcup_{I}\left(  K\cap
K_{i}\right)  \right]  =\bigvee_{I}\left(  K\wedge K_{i}\right)  ,
\]
where $\bigsqcup$ signifies the join in $\mathcal{L}A$. Similarly, the
displayed equations governing principle truncation kernels hold just because
their analogs hold in $\mathcal{L}A$, i.e., $\left\langle a_{1}\right\rangle
\wedge\left\langle a_{2}\right\rangle =\left\langle a_{1}\wedge a_{2}%
\right\rangle $ and $\left\langle a_{1}\right\rangle \vee\left\langle
a_{2}\right\rangle =\left\langle a_{1}\vee a_{2}\right\rangle $. Finally, from
Corollary \ref{Cor:2} we learn that the pseudocomplemented elements of
$\mathcal{L}A$, namely the polars, are present in $\mathcal{K}A$ and therefore
constitute the pseudocomplemented elements of $\mathcal{K}A$.
\end{proof}

\begin{lemma}
\label{Lem:9}For any $a\in A^{+}$ we have $\left[  a\ominus r\right]
\prec\left[  a\ominus s\right]  $ for $s<r$ in $\mathbb{Q}^{+}$.
\end{lemma}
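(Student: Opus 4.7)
My plan is to fix a rational $t$ with $s<t<r$ and take $M:=(a\ominus t)^{\bot}$ as the witness for $\prec$; by Proposition~\ref{Prop:3} this equals $[a\ominus t]^{*}$. The condition $[a\ominus r]\wedge M=0$ is immediate, since $a\ominus r\leq a\ominus t$ gives $[a\ominus r]\subseteq[a\ominus t]$ and pseudocomplements satisfy $[a\ominus t]\wedge[a\ominus t]^{*}=0$. The substantive task is proving $[a\ominus s]\vee M=A$.

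By the absorbing property established in Lemma~\ref{Lem:5}, it suffices to show every truncation-idempotent $b\in A^{+}$ (i.e., $b=\overline{b}$) lies in $[a\ominus s]\vee M$. Given such a $b$, I pick $n\in\mathbb{N}$ with $n\geq 1/(t-s)$, so that $s+1/n\leq t$, and set the ``bump'' $\sigma_{n}:=\overline{n(a\ominus s)}\in[a\ominus s]$. The standard $\ell$-group decomposition $b=(b\wedge\sigma_{n})+(b-\sigma_{n})^{+}$ places the first summand in $[a\ominus s]$ by convexity, so the remaining goal is $(b-\sigma_{n})^{+}\wedge(a\ominus t)=0$, which puts $(b-\sigma_{n})^{+}$ into $M$.

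This last orthogonality is the main obstacle. The crucial identity, obtained by applying $c=\overline{c}+(c\ominus 1)$ to $c=n(a\ominus s)$ together with $nc\ominus 1=n(c\ominus 1/n)$ and the $\ominus$-associativity of \cite[Lemma~3.3.6]{Ball:2013}, is
\[
n(a\ominus s)=\sigma_{n}+n\bigl(a\ominus(s+1/n)\bigr).
\]
Since $a\ominus t\leq a\ominus(s+1/n)$, this yields $(a\ominus t)+\sigma_{n}\leq n(a\ominus s)$ and hence $\overline{(a\ominus t)+\sigma_{n}}\leq\sigma_{n}$ by the monotonicity of truncation (itself a consequence of ($\mathfrak{T}1$)). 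Axiom~($\mathfrak{T}1$) applied with the truncation-idempotency $b=\overline{b}$ then gives $b\wedge(\sigma_{n}+(a\ominus t))\leq\overline{(a\ominus t)+\sigma_{n}}\leq\sigma_{n}$, and the standard $\ell$-group identity $(x-y)^{+}\wedge z=(x\wedge(y+z)-y)^{+}$ (valid for $z\geq 0$) converts this into $(b-\sigma_{n})^{+}\wedge(a\ominus t)=0$, as required. In the unital $\mathbf{W}$-case this is transparent because $\sigma_{n}$ coincides with the unit on $\{a>t\}$ for $n>1/(t-s)$; the trunc version just sketched is the pointfree surrogate, replacing the topological bump function by $\sigma_{n}$ and the unit inequality by ($\mathfrak{T}1$).
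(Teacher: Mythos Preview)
Your argument is correct, and it proceeds along a genuinely different route from the paper's own proof. The paper first reduces to the case $s=0$ by writing $a\ominus r=(a\ominus s)\ominus(r-s)$, takes $(a\ominus r)^{\bot}$ itself as the witness, and then appeals to the disjointification result \cite[3.3.7]{Ball:2013} to produce, for each $b\in A^{+}$, an element $x\in(a\ominus r)^{\bot}$ with $2(a\vee x)\geq r\,\overline{b/r}$; absorption then finishes. You instead keep general $s<r$, interpolate with a rational $t$, and construct the explicit bump $\sigma_{n}=\overline{n(a\ominus s)}$, verifying the needed orthogonality $(b-\sigma_{n})^{+}\wedge(a\ominus t)=0$ from first principles using only the decomposition $c=\overline{c}+(c\ominus 1)$, axiom~($\mathfrak{T}1$), and standard $\ell$-group identities. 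The paper's version is shorter because the hard inequality is packaged into the cited lemma; your version is more self-contained and makes the mechanism transparent without external input. The extra interpolation point $t$ is what lets you avoid the disjointification lemma: it gives you the slack $1/n\leq t-s$ needed to push $\overline{(a\ominus t)+\sigma_{n}}$ back down to $\sigma_{n}$.
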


\begin{proof}
It is sufficient to establish this for $s=0$, for the general case follows by
expressing $a\ominus r$ as $\left(  a\ominus s\right)  \ominus\left(
r-s\right)  $ and applying the special case. We want to show that $a\ominus
r^{\bot}\vee\left[  a\right]  =A=\top$. But according to \cite[3.3.7]%
{Ball:2013}, for each $b\in A^{+}$ there is some $x\in a\ominus r^{\bot}$ for
which $2\left(  a\vee x\right)  \geq r\overline{b/r}$. It follows that
$\overline{b/r}$ lies in $\left\langle a\ominus r^{\bot},a\right\rangle $, the
convex $\ell$-subgroup generated by $a\ominus r^{\bot}\cup\left\{  a\right\}
$. And since the truncation kernel $a\ominus r^{\bot}\vee\left[  a\right]  $
generated by $a\ominus r^{\bot}\cup\left\{  a\right\}  $ is absorbing, it must
contain $b$. All this is to say that $a\ominus r^{\bot}\vee\left[  a\right]
=A=\top$.
\end{proof}

\begin{lemma}
\label{Lem:2}$\mathcal{K}A$ is regular.
\end{lemma}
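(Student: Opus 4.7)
The plan is to use Lemma \ref{Lem:9} and Corollary \ref{Cor:4} in tandem: the former furnishes a supply of well-below pairs of the form $[a\ominus r]\prec[a\ominus s]$, and the latter exhibits each principal truncation kernel as a join of such well-below predecessors. Regularity of the whole frame then reduces to writing an arbitrary truncation kernel as a join of principal ones.

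First I would check the reduction to principal kernels. For any $K\in\mathcal{K}A$, one has $K=\bigvee_{a\in K^{+}}[a]$: for each $a\in K^{+}$, $[a]\subseteq K$ since $K$ is a truncation kernel containing $a$, and conversely $K\subseteq\bigvee_{a\in K^{+}}[a]$ since every $b\in K$ satisfies $|b|\in K^{+}$, so $b$ lies in the convex $\ell$-subgroup $[|b|]$, hence in the join. So it suffices to show that each principal truncation kernel $[a]$ (with $a\in A^{+}$) is the join of elements well below $K$ whenever $[a]\le K$.

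Next I would invoke Corollary \ref{Cor:4} at $r=0$, which gives
\[
[a]=[a\ominus 0]=\bigvee_{s>0,\,s\in\mathbb{Q}^{+}}[a\ominus s].
\]
By Lemma \ref{Lem:9}, for each such $s>0$ one has $[a\ominus s]\prec[a\ominus 0]=[a]$. Combining these, every $[a]$ is the join of elements well below itself. The well-below relation is upward stable in its second argument (from $L^{\ast}\vee M=\top$ and $M\le K$ we get $L^{\ast}\vee K=\top$), so if $[a]\le K$ then $[a\ominus s]\prec K$ for every $s>0$. Assembling everything,
\[
K=\bigvee_{a\in K^{+}}[a]=\bigvee_{a\in K^{+}}\bigvee_{s>0,\,s\in\mathbb{Q}^{+}}[a\ominus s],
\]
a join of elements well below $K$, establishing regularity.

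I do not anticipate a real obstacle: the two ingredients (Lemma \ref{Lem:9} and Corollary \ref{Cor:4}) have already done the substantive work. The only thing to be careful about is the reduction step $K=\bigvee_{a\in K^{+}}[a]$, which uses that the right-hand side is itself a truncation kernel (a join in $\mathcal{K}A$), so convexity absorbs signs and all of $K$ is recovered from its positive cone.
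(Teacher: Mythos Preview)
Your argument is correct and is essentially the paper's own proof: the paper invokes Lemma~\ref{Lem:1} and Lemma~\ref{Lem:9} to write $[a]=\bigvee_{\mathbb{N}}[a\ominus(1/n)]$ as a join of elements rather below it, and then appeals to the fact that the principal truncation kernels generate $\mathcal{K}A$. Your use of Corollary~\ref{Cor:4} at $r=0$ is just the same computation packaged slightly differently (Corollary~\ref{Cor:4} being derived from Lemma~\ref{Lem:1}), and you have simply made the reduction step $K=\bigvee_{a\in K^{+}}[a]$ and the upward stability of $\prec$ explicit where the paper leaves them implicit.
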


\begin{proof}
By Lemmas \ref{Lem:1} and \ref{Lem:9}, each principal truncation kernel
$\left[  a\right]  $ can be expressed as a countable join of elements rather
below it, viz.\ $\left[  a\right]  =\bigvee_{\mathbb{N}}\left[  a\ominus
\left(  1/n\right)  \right]  $. Since the principal truncation ideals generate
$\mathcal{K}A$, we can be sure it is regular.
\end{proof}

We write $S_{0}\subseteq_{\omega_{1}}S$ to mean that $S_{0}$ is a subset of
$S$ which is at most countable.

\begin{lemma}
\label{Lem:3}For any subset $S\subseteq\mathcal{K}A$,
\[
\bigvee S=\bigcup\left\{  \bigvee S_{0}:S_{0}\subseteq_{\omega_{1}}S\right\}
.
\]

\end{lemma}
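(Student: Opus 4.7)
The plan is to verify that the right-hand side is itself a truncation kernel containing every member of $S$; since $\bigvee S$ is by definition the smallest truncation kernel with that property, this forces the nontrivial inclusion $\bigvee S \subseteq \bigcup\{\bigvee S_0 : S_0 \subseteq_{\omega_1} S\}$. The opposite inclusion is immediate, since each countable subjoin $\bigvee S_0$ is contained in $\bigvee S$. Write $U$ for the right-hand side.

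The crucial structural fact is that the family $\{\bigvee S_0 : S_0 \subseteq_{\omega_1} S\}$ is \emph{$\omega_1$-directed} under inclusion: a countable union of countable subsets of $S$ is again countable. Consequently, any countable list of elements of $U$ already sits in a single $\bigvee S_0$ with $S_0 \subseteq_{\omega_1} S$.

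With this in hand, each defining property of a truncation kernel (combining Lemmas \ref{Lem:4} and \ref{Lem:5}) transfers from the $\bigvee S_0$'s to $U$. Closure of $U$ under the $\ell$-group operations and convexity requires only the two-element version of the observation: any two given elements sit in a common $\bigvee S_0$, which is already closed under these operations. The absorbing property is likewise immediate, since a single witness suffices: if $\overline{a} \in U$ then $\overline{a} \in \bigvee S_0$ for some countable $S_0$, so $a \in \bigvee S_0 \subseteq U$. The archimedean kernel criterion of Lemma \ref{Lem:4} is the only place where the full strength of $\omega_1$-directedness bites: if $(na-c)^+ \in U$ for all $n \in \mathbb{N}$, then each $(na-c)^+$ lies in some $\bigvee S_0^{(n)}$ with $S_0^{(n)}$ countable, and the countable union $S_0 \equiv \bigcup_n S_0^{(n)}$ is a single countable subset of $S$ witnessing $(na-c)^+ \in \bigvee S_0$ for every $n$ simultaneously; since $\bigvee S_0$ is itself an archimedean kernel, $a \in \bigvee S_0 \subseteq U$.

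Finally, each $K \in S$ coincides with $\bigvee\{K\}$ and thus lies in $U$, so $U$ contains $\bigvee S$. The single point to guard against is settling for mere (finite) directedness of the family: the archimedean kernel condition involves a countably infinite family of hypotheses $\{(na-c)^+\}_{n\in\mathbb{N}}$, and it is precisely the $\omega_1$-directedness, afforded by the countability of $\mathbb{N}$, that lets us collect all these witnesses into one countable $S_0$.
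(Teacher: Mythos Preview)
Your argument is correct and follows essentially the same route as the paper: show that the right-hand union is itself an absorbing archimedean kernel, using the fact that a countable union of countable subsets of $S$ is again countable to collapse the witnesses $(na-c)^+$ into a single $\bigvee S_0$. The paper's proof is terser---it takes the convex $\ell$-subgroup structure and the absorbing property as evident and records only the archimedean step---but your more explicit treatment of the finitary closure via $\omega_1$-directedness is a harmless elaboration, not a different method.
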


\begin{proof}
By Lemma \ref{Lem:5} it is sufficient to show that the set displayed on the
right, call it $K$, is an absorbing archimedean kernel. That $K$ is absorbing
is clear, so consider elements $a,c\in A^{+}$ such that $\left(  na-c\right)
^{+}\in K$ for all $n$, say $\left(  na-c\right)  ^{+}\in\bigvee S_{n}$ for
some $S_{n}\subseteq_{\omega_{1}}S$. Put $S_{0}\equiv\bigcup_{n}S_{n}%
\subseteq_{\omega_{1}}S$ and $L\equiv\bigvee S_{0}\subseteq K$. Then $L$ is a
truncation kernel containing $\left(  na-c\right)  ^{+}$ for all $n$, hence
$a\in L$ by Lemma \ref{Lem:4}, hence $K$ is an archimedean kernel by a second
application of the same lemma.
\end{proof}

We summarize the results of our investigation to this point..

\begin{theorem}
\label{Thm:1}$\mathcal{K}A$ is a regular Lindel\"{o}f frame.
\end{theorem}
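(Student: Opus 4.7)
The plan is to assemble previous results rather than introduce any new argument. Proposition~\ref{Prop:3} already establishes that $\mathcal{K}A$ is a frame, identifying the binary meet with intersection and the join with the operator $[\cdot]$. Lemma~\ref{Lem:2} provides regularity: every principal truncation kernel $[a]$ is the countable join $\bigvee_{n}[a\ominus(1/n)]$ of truncation kernels rather below it (by Lemmas~\ref{Lem:1} and \ref{Lem:9}), and since the principal kernels generate $\mathcal{K}A$, regularity extends to the whole frame. Lemma~\ref{Lem:3} supplies the Lindel\"{o}f condition in the form that every join $\bigvee S$ in $\mathcal{K}A$ coincides with the set-theoretic union of the countable subjoins $\bigvee S_{0}$, $S_{0}\subseteq_{\omega_{1}}S$, so any covering family can be refined through its countable subfamilies.

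Consequently, the proof I would write is essentially one sentence: combine Proposition~\ref{Prop:3} with Lemmas~\ref{Lem:2} and \ref{Lem:3}. No fresh calculation is required, and there is no substantive obstacle to surmount at this stage. The genuine work has been carried out in the preceding lemmas; in particular, the content behind the Lindel\"{o}f property sits in Lemma~\ref{Lem:3}, where verifying the archimedean-kernel criterion of Lemma~\ref{Lem:5} for a union over countable subfamilies requires a diagonal amalgamation of countably many witnesses $(na-c)^{+}\in\bigvee S_{n}$ into a single countable subfamily $\bigcup_{n}S_{n}$. Once this has been done, the theorem itself is a summary statement and its proof is a one-line citation.
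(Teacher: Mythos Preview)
Your proposal is correct and matches the paper's own proof essentially verbatim: the paper simply cites Lemma~\ref{Lem:2} for regularity and notes that the Lindel\"{o}f property is an immediate consequence of Lemma~\ref{Lem:3}. Your additional reference to Proposition~\ref{Prop:3} for the frame structure is implicit in the paper's treatment, and your recap of the mechanism behind Lemma~\ref{Lem:3} is accurate though not strictly needed here.
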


\begin{proof}
We showed $\mathcal{K}A$ to be regular in Lemma \ref{Lem:2}; the fact that it
is Lindel\"{o}f is an immediate consequence of Lemma \ref{Lem:3}.
\end{proof}

\subsection{More about truncation kernels}

The functorial representation we seek rests on the properties of certain
truncation kernels of $A$. We develop these properties in this section. This
material, however, will not be relevant until the main Section \ref{Sec:2}.
The reader may therefore skip this section without loss of continuity, and
refer to these results only as they are put to use.

Two truncation kernels associated with a given $a\in A^{+}$ play prominent
roles in what follows. We define%
\[
a\blacktriangleright r\equiv\left[  a\ominus r\right]  \;\text{and\ }%
a\blacktriangleleft r\equiv\bigvee_{0<s<r}a\ominus s^{\bot},\;r\in
\mathbb{Q}^{+}.
\]
The join in the definition of $a\blacktriangleleft r$ is computed in
$\mathcal{K}A$, and can be handily expressed as
\[
\bigvee_{s<r}a\blacktriangleright s^{\ast}\;\text{or\ }%
%TCIMACRO{\tbigvee _{\mathbb{N}}}%
%BeginExpansion
{\textstyle\bigvee_{\mathbb{N}}}
%EndExpansion
a\ominus\left(  r-\frac{1}{n}\right)  ^{\bot}\;\text{or\ }\left[
%TCIMACRO{\tbigcup _{\mathbb{N}}}%
%BeginExpansion
{\textstyle\bigcup_{\mathbb{N}}}
%EndExpansion
a\ominus\left(  r-\frac{1}{n}\right)  ^{\bot}\right]  .
\]
We record the basic properties of these kernels in Lemma \ref{Lem:22}, whose
proof requires a preliminary observation.

\begin{lemma}
\label{Lem:12}Suppose that $b_{1}\succ b_{2}$ and $c_{1}\succ c_{2}$ in some
frame. Then%
\[
b_{1}^{\ast}\vee c_{1}^{\ast}\leq\left(  b_{1}\wedge c_{1}\right)  ^{\ast}\leq
b_{2}^{\ast}\vee c_{2}^{\ast}.
\]

\end{lemma}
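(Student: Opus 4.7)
The left inequality $b_{1}^{\ast}\vee c_{1}^{\ast}\leq(b_{1}\wedge c_{1})^{\ast}$ is immediate from the antitonicity of the pseudocomplement, since $b_{1}\wedge c_{1}\leq b_{1}$ and $b_{1}\wedge c_{1}\leq c_{1}$. So the content is entirely in the right inequality.

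To prove $(b_{1}\wedge c_{1})^{\ast}\leq b_{2}^{\ast}\vee c_{2}^{\ast}$, I would first recall the standard frame-theoretic consequence of the definition of $\prec$: if $x^{\ast}\vee y=\top$, then frame distributivity gives $x=x\wedge(x^{\ast}\vee y)=(x\wedge x^{\ast})\vee(x\wedge y)=x\wedge y$, so $x\leq y$. Applied to the hypothesis $c_{2}\prec c_{1}$, this yields $c_{2}\leq c_{1}$ and hence $c_{1}^{\ast}\leq c_{2}^{\ast}$. The main move is then to cut $(b_{1}\wedge c_{1})^{\ast}$ along the cover $b_{2}^{\ast}\vee b_{1}=\top$ supplied by $b_{2}\prec b_{1}$: by distributivity,
\[
(b_{1}\wedge c_{1})^{\ast}=(b_{1}\wedge c_{1})^{\ast}\wedge(b_{2}^{\ast}\vee b_{1})=\bigl((b_{1}\wedge c_{1})^{\ast}\wedge b_{2}^{\ast}\bigr)\vee\bigl((b_{1}\wedge c_{1})^{\ast}\wedge b_{1}\bigr).
\]
The first disjunct sits below $b_{2}^{\ast}$. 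The second is disjoint from $c_{1}$, since meeting it with $c_{1}$ produces $(b_{1}\wedge c_{1})^{\ast}\wedge(b_{1}\wedge c_{1})=0$, so it sits below $c_{1}^{\ast}\leq c_{2}^{\ast}$. Combining the two estimates gives the required bound.

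I expect the only real obstacle to be spotting the correct splitting. Frame distributivity applied to the cover $b_{2}^{\ast}\vee b_{1}=\top$ is the only natural way to bring $b_{2}^{\ast}$ into contact with $(b_{1}\wedge c_{1})^{\ast}$; once the decomposition is written down, the second hypothesis $c_{2}\prec c_{1}$ conveniently disposes of the piece lying under $b_{1}$, and the rest is pure manipulation. No use of regularity or any further assumption on the ambient frame appears to be needed.
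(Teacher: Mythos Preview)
Your argument is correct. The paper takes a slightly different, more symmetric route: it observes directly that
\[
(b_{2}^{\ast}\vee c_{2}^{\ast})\vee(b_{1}\wedge c_{1})
=(b_{2}^{\ast}\vee c_{2}^{\ast}\vee b_{1})\wedge(b_{2}^{\ast}\vee c_{2}^{\ast}\vee c_{1})=\top,
\]
using both $\prec$ hypotheses at once, and then invokes the general fact that $x\vee y=\top$ forces $y^{\ast}\leq x$ (proved by the same distribute-against-the-cover move you use). So where you split $(b_{1}\wedge c_{1})^{\ast}$ along the one-sided cover $b_{2}^{\ast}\vee b_{1}=\top$ and then appeal to $c_{2}\leq c_{1}$ to handle the residual piece, the paper first combines the two covers into a single one tailored to $b_{1}\wedge c_{1}$ and finishes in one step. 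Your version is a bit more hands-on and asymmetric but arguably easier to discover; the paper's is a clean one-liner once the combined cover is spotted. Neither approach needs anything beyond frame distributivity.
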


\begin{proof}
The fact that $\left(  b_{2}^{\ast}\vee c_{2}^{\ast}\right)  \vee\left(
b_{1}\wedge c_{1}\right)  =\top$ implies the right inequality.
\end{proof}

\begin{lemma}
\label{Lem:22}Suppose $a_{i}\in\overline{A}$ and $0\leq s\leq r$ in
$\mathbb{Q}$.

\begin{enumerate}
\item $a_{1}\blacktriangleright r\vee a_{2}\blacktriangleright r=\left(
a_{1}\vee a_{2}\right)  \blacktriangleright r$, and $a_{1}\blacktriangleright
r\wedge a_{2}\blacktriangleright r=\left(  a_{1}\wedge a_{2}\right)
\blacktriangleright r$

\item $a_{1}\blacktriangleleft r\vee a_{2}\blacktriangleleft r=\left(
a_{1}\wedge a_{2}\right)  \blacktriangleleft r$, and $a_{1}\blacktriangleleft
r\wedge a_{2}\blacktriangleleft r=\left(  a_{1}\vee a_{2}\right)
\blacktriangleleft r$.

\item If $s<r$ then $a\blacktriangleright r\prec a\blacktriangleright s$,
hence $a\blacktriangleright s^{\ast}\prec a\blacktriangleright r^{\ast}$.

\item $a\blacktriangleleft s\wedge a\blacktriangleright r=\bot$, and if $s<r$
then $a\blacktriangleleft r\vee a\blacktriangleright s=\top$.

\item If $s<r$ then $a\blacktriangleleft s\prec a\blacktriangleleft r$, hence
$a\blacktriangleleft r^{\ast}\prec a\blacktriangleleft s^{\ast}$.
\end{enumerate}
\end{lemma}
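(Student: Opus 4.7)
The plan is to handle the five assertions in the order (3), (1), (4), (5), (2), since each depends on those preceding it. Part (3) is essentially a restatement of Lemma \ref{Lem:9}, with the consequent pseudocomplement inequality obtained from the standard frame implication $b\prec c\Rightarrow c^{\ast}\prec b^{\ast}$, which follows from $b^{\ast}\vee c^{\ast\ast}\geq b^{\ast}\vee c=\top$. Part (1) combines the principal-kernel identities $[b_{1}\wedge b_{2}]=[b_{1}]\wedge[b_{2}]$ and $[b_{1}\vee b_{2}]=[b_{1}]\vee[b_{2}]$ from Proposition \ref{Prop:3} with the arithmetic identities $(a_{1}\vee a_{2})\ominus r=(a_{1}\ominus r)\vee(a_{2}\ominus r)$ and $(a_{1}\wedge a_{2})\ominus r=(a_{1}\ominus r)\wedge(a_{2}\ominus r)$ from Section~3.3 of \cite{Ball:2013}.

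For part (4), distributing meet over the defining join of $a\blacktriangleleft s$ reduces the first claim to the observation that $t<s\leq r$ forces $a\blacktriangleright r\leq a\blacktriangleright t$, whence $(a\blacktriangleright t)^{\ast}\wedge a\blacktriangleright r=\bot$. For the second claim, pick a rational $t$ with $s<t<r$; part (3) yields $(a\blacktriangleright t)^{\ast}\vee a\blacktriangleright s=\top$, and $(a\blacktriangleright t)^{\ast}\leq a\blacktriangleleft r$ finishes the argument. Part (5) then drops out of (4): its first half gives $a\blacktriangleright s\leq(a\blacktriangleleft s)^{\ast}$, so its second half yields $(a\blacktriangleleft s)^{\ast}\vee a\blacktriangleleft r\geq a\blacktriangleright s\vee a\blacktriangleleft r=\top$, which is $a\blacktriangleleft s\prec a\blacktriangleleft r$; the remaining pseudocomplement inequality is again by the principle used in~(3).

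The main obstacle is part (2), and specifically the $\geq$ direction of its join identity. For the meet identity, a cofinality argument works: since $\blacktriangleright$ is order-reversing in its rational argument, for any $s,t<r$ with $u=\max(s,t)$ one has $(a_{i}\blacktriangleright s)^{\ast}\leq(a_{i}\blacktriangleright u)^{\ast}$, so distributivity collapses $(a_{1}\blacktriangleleft r)\wedge(a_{2}\blacktriangleleft r)$ to $\bigvee_{0<u<r}((a_{1}\blacktriangleright u)^{\ast}\wedge(a_{2}\blacktriangleright u)^{\ast})$, which by part (1) equals $\bigvee_{0<u<r}((a_{1}\vee a_{2})\blacktriangleright u)^{\ast}=(a_{1}\vee a_{2})\blacktriangleleft r$. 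The $\leq$ half of the join identity is immediate from the frame bound $x^{\ast}\vee y^{\ast}\leq(x\wedge y)^{\ast}$ combined with part (1). For the delicate $\geq$ half, for each $t\in(0,r)$ I would interpolate a rational $s\in(t,r)$, invoke part (3) to get $a_{i}\blacktriangleright t\succ a_{i}\blacktriangleright s$, and apply Lemma \ref{Lem:12} to conclude
\[
((a_{1}\wedge a_{2})\blacktriangleright t)^{\ast}\leq(a_{1}\blacktriangleright s)^{\ast}\vee(a_{2}\blacktriangleright s)^{\ast}\leq(a_{1}\blacktriangleleft r)\vee(a_{2}\blacktriangleleft r);
\]
taking the join over $0<t<r$ completes the proof. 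The subtlety is that Lemma \ref{Lem:12}'s upper bound demands a \emph{strictly larger} rather-below pair, which is precisely what this rational interpolation supplies.
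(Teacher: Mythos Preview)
Your proof is correct and follows essentially the same route as the paper's. The paper proves the parts in the listed order (1)--(5), whereas you reorder to (3), (1), (4), (5), (2), but the content is identical: (3) is Lemma~\ref{Lem:9}, (1) uses Proposition~\ref{Prop:3} together with the $\ominus$ identities from \cite{Ball:2013}, (4) and (5) are direct frame computations, and for the join identity in (2) both you and the paper use Lemma~\ref{Lem:12} after a rational interpolation (the paper indexes by $1/n$ rather than arbitrary rationals, but the mechanism is the same).
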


\begin{proof}
(1) By Proposition \ref{Prop:3} we can express $a_{1}\blacktriangleright r\vee
a_{2}\blacktriangleright r$ as
\[
\left[  a_{1}\ominus r\right]  \vee\left[  a_{2}\ominus r\right]  =\left[
a_{1}\ominus r\vee a_{2}\ominus r\right]  =\left[  \left(  a_{1}\vee
a_{2}\right)  \ominus r\right]  =\left(  a_{1}\vee a_{2}\right)
\blacktriangleright r.
\]
The penultimate equality is provided by \cite[3.3.1(8)]{Ball:2013}. The
argument for the second clause of part (1) is similar.

(2)We can express $a_{1}\blacktriangleleft r\wedge a_{2}\blacktriangleleft r$
as
\begin{gather*}
\bigvee_{0<s_{1}<r}\left[  a_{1}\ominus s_{1}\right]  ^{\ast}\wedge
\bigvee_{0<s_{2}<r}\left[  a_{2}\ominus s_{2}\right]  ^{\ast}=\bigvee
_{0<s_{i}<r}\left(  \left[  a_{1}\ominus s_{1}\right]  ^{\ast}\wedge\left[
a_{2}\ominus s_{2}\right]  ^{\ast}\right)  =\\
\bigvee_{0<s_{i}<r}\left(  \left[  a_{1}\ominus s_{1}\right]  \vee\left[
a_{2}\ominus s_{2}\right]  \right)  ^{\ast}=\bigvee_{0<s<r}\left(  \left[
a_{1}\ominus s\right]  \vee\left[  a_{2}\ominus s\right]  \right)  ^{\ast}=\\
\bigvee_{0<s<r}\left[  a_{1}\ominus s\vee a_{2}\ominus s\right]  ^{\ast
}=\bigvee_{0<s<r}\left[  \left(  a_{1}\vee a_{2}\right)  \ominus s\right]
^{\ast}=\left(  a_{1}\vee a_{2}\right)  \blacktriangleleft r.
\end{gather*}
And we can express $a_{1}\blacktriangleleft r\vee a_{2}\blacktriangleleft r$
as
\begin{gather*}
\bigvee_{\mathbb{N}}a_{1}\ominus\left(  r-\frac{1}{n}\right)  ^{\bot}%
\vee\bigvee_{\mathbb{N}}a_{2}\ominus\left(  r-\frac{1}{m}\right)  ^{\bot} =\\
\bigvee_{\mathbb{N}}\left(  a_{1}\ominus\left(  r-\frac{1}{n}\right)  ^{\bot
}\vee a_{2}\ominus\left(  r-\frac{1}{n}\right)  ^{\bot}\right)  =\\
\bigvee_{\mathbb{N}}\left(  a_{1}\ominus\left(  r-\frac{1}{n}\right)  \wedge
a_{2}\ominus\left(  r-\frac{1}{n}\right)  \right)  ^{\bot} =\\
\bigvee_{\mathbb{N}}\left(  \left(  a_{1}\wedge a_{2}\right)  \ominus\left(
r-\frac{1}{n}\right)  \right)  ^{\bot}=\left(  a_{1}\wedge a_{2}\right)
\blacktriangleleft r
\end{gather*}

The second equality is justified by Lemma \ref{Lem:22}, the third by
\cite[3.3.1(7)]{Ball:2013}.

(3) is Lemma \ref{Lem:9}. To check (4), compute
\[
a\blacktriangleright r\wedge a\blacktriangleleft s=a\blacktriangleright
r\wedge\bigvee_{t<s}a\blacktriangleright t^{\ast}=\bigvee_{t<s}\left(
a\blacktriangleright r\wedge a\blacktriangleright t^{\ast}\right)  =\bot.
\]
Then observe that, for $s<r$,
\[
a\blacktriangleright s\vee a\blacktriangleleft r=a\blacktriangleright
s\vee\bigvee_{t<r}a\blacktriangleright t^{\ast}=\bigvee_{s<t<r}\left(
a\blacktriangleright s\vee a\blacktriangleright t^{\ast}\right)  =\top.
\]
The final equality is implied by (3) above. And finally, (5) holds because
$a\blacktriangleright s$ serves as a witness to $a\blacktriangleleft s\prec
a\blacktriangleleft r$, a fact we have established in (3) and (4).
\end{proof}

\begin{lemma}
\label{Lem:19}For $a,b\in\overline{A}$, $a+b\in\overline{A}$ implies $b\in
a\blacktriangleleft1$.
\end{lemma}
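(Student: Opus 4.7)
The plan is to realise $b$ as an element of the truncation kernel $a\blacktriangleleft 1=\bigvee_{0<s<1}(a\ominus s)^{\bot}$. I will show, for each $n\geq 2$, that $b\ominus(1/n)\in(a\ominus(1-1/n))^{\bot}$; the latter polar is a truncation kernel by Corollary~\ref{Cor:7} and sits below $a\blacktriangleleft 1$ by the very definition of the latter, so this will force $[b\ominus 1/n]\subseteq a\blacktriangleleft 1$. Lemma~\ref{Lem:1} then delivers
\[
[b]=[b\ominus 1/n:n\in\mathbb N]=\bigvee_n[b\ominus 1/n]\subseteq a\blacktriangleleft 1,
\]
and hence $b\in a\blacktriangleleft 1$. (The $n=1$ term contributes nothing because $\overline b=b$ forces $b\ominus 1=0$.)

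Everything therefore reduces to the meet identity $(b\ominus 1/n)\wedge(a\ominus(1-1/n))=0$. Unwinding $x\ominus r = x - r\overline{x/r}$ one reads off $n(x\ominus r)=nx\ominus nr$, and since multiplication by the positive rational $1/n$ preserves $\wedge$, the identity is equivalent to
\[
(nb\ominus 1)\wedge(na\ominus(n-1))=0,
\]
which is the one step requiring actual work.

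The key idea is to exploit $\overline{a+b}=a+b$ by using $a+b$ as the witness in axiom~($\mathfrak T 1$). Applying the axiom to $nb$ and to $na/(n-1)$ against $a+b$ (the second inequality multiplied through by $n-1$) yields
\[
\overline{nb}\geq nb\wedge(a+b),\qquad (n-1)\overline{na/(n-1)}\geq na\wedge(n-1)(a+b).
\]
Substituting into $nb\ominus 1=nb-\overline{nb}$ and $na\ominus(n-1)=na-(n-1)\overline{na/(n-1)}$ and using the standard $\ell$-group identity $x-x\wedge y=(x-y)^+$ collapses the upper bounds to
\[
nb\ominus 1\leq ((n-1)b-a)^+,\qquad na\ominus(n-1)\leq (a-(n-1)b)^+.
\]
Writing $t=(n-1)b-a$, the two right-hand sides are $t^+$ and $t^-$, whose meet is $0$ in any $\ell$-group. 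Since $nb\ominus 1$ and $na\ominus(n-1)$ are themselves nonnegative, their meet also vanishes.

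The main obstacle is spotting the use of $a+b$ as the witness in ($\mathfrak T 1$); once that substitution is in hand, the rest is routine manipulation with the definition of $\ominus$, the elementary identities $x-x\wedge y=(x-y)^+$ and $t^+\wedge t^-=0$, and the description of $[b]$ supplied by Lemma~\ref{Lem:1}.
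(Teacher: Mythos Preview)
Your proof is correct and follows essentially the same route as the paper: both arguments show $(b\ominus 1/n)\wedge(a\ominus(1-1/n))=0$ for each $n$, place $b\ominus 1/n$ in the polar $(a\ominus(1-1/n))^{\bot}\subseteq a\blacktriangleleft 1$, and then invoke Lemma~\ref{Lem:1} to conclude $b\in a\blacktriangleleft 1$. The only difference is that the paper obtains the key disjointness from the cited inequality \cite[3.3.7]{Ball:2013} (namely $a\ominus(1-1/n)\wedge b\ominus 1/n\leq(a+b)\ominus 1=0$), whereas you derive it directly from axiom~($\mathfrak T1$) via the bounds $nb\ominus 1\leq((n-1)b-a)^{+}$ and $na\ominus(n-1)\leq(a-(n-1)b)^{+}$; your version is thus a self-contained reproof of the needed special case of that identity.
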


\begin{proof}
For any $n$,
\[
a\ominus\left(  1-\frac{1}{n}\right)  \wedge b\ominus\frac{1}{n}\leq\left(
a+b\right)  \ominus1=0,
\]
by \cite[3.3.7]{Ball:2013}, and this implies that
\[
b\ominus\frac{1}{n}\in a\ominus\left(  1-\frac{1}{n}\right)  ^{\bot}%
\subseteq\left[  \bigcup_{s<1}a\ominus s^{\bot}\right]  =a\blacktriangleleft
1.
\]
We conclude that $b\in a\blacktriangleleft1$ by Lemma \ref{Lem:1}.
\end{proof}

\begin{lemma}
\label{Lem:23}For $a,b\in\overline{A}$, $b\blacktriangleright0\vee
a\blacktriangleleft1=\left(  a-b\right)  ^{+}\blacktriangleleft1$.
\end{lemma}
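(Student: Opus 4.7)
The plan is to prove the two inclusions separately. The forward inclusion $b\blacktriangleright 0 \vee a\blacktriangleleft 1 \le (a-b)^+\blacktriangleleft 1$ breaks into two pieces. First, $(a-b)^+ \le a$ gives $(a-b)^+\ominus s \le a\ominus s$ by monotonicity of $\ominus$ in its first argument, whence $[a\ominus s]^* \le [(a-b)^+\ominus s]^*$; joining over $s\in(0,1)$ yields $a\blacktriangleleft 1 \le (a-b)^+\blacktriangleleft 1$. Second, $[b] \le (a-b)^+\blacktriangleleft 1$ is proved by mimicking Lemma \ref{Lem:19}: since $a,b\in\overline{A}$ forces $a\ominus 1 = b\ominus 1 = 0$, the identity $(a-b)^+ + b = a\vee b$ together with \cite[3.3.7]{Ball:2013} gives
\[
\bigl((a-b)^+ \ominus(1-\tfrac{1}{n})\bigr) \wedge \bigl(b\ominus\tfrac{1}{n}\bigr) \le \bigl((a-b)^+ + b\bigr)\ominus 1 = (a\ominus 1)\vee(b\ominus 1) = 0,
\]
so $b\ominus(1/n) \in [(a-b)^+\ominus(1-1/n)]^* \le (a-b)^+\blacktriangleleft 1$ for every $n$, and Lemma \ref{Lem:1} then forces $b\in(a-b)^+\blacktriangleleft 1$, i.e.\ $[b] \le (a-b)^+\blacktriangleleft 1$.

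The reverse inclusion is the substantial direction. Set $c = (a-b)^+$ and $M = [b]\vee a\blacktriangleleft 1$; the task is to show $[c\ominus s]^* \le M$ for each $0 < s < 1$. The key algebraic input is $a \le a\vee b = c+b$, to which I apply the additive $\ominus$-inequality $(x+y)\ominus t \le (x\ominus\sigma) + (y\ominus(t-\sigma))$ for $0\le\sigma\le t$ (either cited from or derived from the identities in \cite[3.3]{Ball:2013}). This yields $a\ominus t \le (c\ominus s) + (b\ominus(t-s))$ for $s\le t$, and on rearranging positive parts,
\[
\bigl((a\ominus t) - (b\ominus(t-s))\bigr)^+ \le c\ominus s.
\]
Now choose $s < t' < t < 1$. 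For $z\ge 0$ with $z\wedge(c\ominus s)=0$, this estimate combined with the routine $\ell$-group fact that $z\wedge(X-Y)^+=0$ implies $z\wedge X \le Y$ gives $z\wedge(a\ominus t') \le b\ominus(t'-s) \le b$, so $[z\wedge(a\ominus t')] \le [b] \le M$. On the other hand Lemma \ref{Lem:22}(3) supplies the cover $\top = [a\ominus t]^*\vee[a\ominus t']$; meeting with $[z]$ and using frame distributivity together with the identity $[z]\wedge[a\ominus t'] = [z\wedge(a\ominus t')]$ from Proposition \ref{Prop:3} yields
\[
[z] = \bigl([z]\wedge[a\ominus t]^*\bigr) \vee [z\wedge(a\ominus t')],
\]
whose first summand lies in $[a\ominus t]^* \le a\blacktriangleleft 1 \le M$ because $t<1$. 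Hence $[z] \le M$; as $z$ ranges over $[c\ominus s]^*$, this shows $[c\ominus s]^* \le M$, and joining over $s\in(0,1)$ completes the reverse inclusion.

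The main obstacle is the additive $\ominus$-inequality invoked in the reverse direction: unlike its meet-version analogue \cite[3.3.7]{Ball:2013} used in the forward direction, it is not explicitly recalled in the excerpt and requires either a citation from \cite{Ball:2013} or a short direct verification from the defining formula $x\ominus r = x - r\overline{x/r}$ (pointwise it reduces to $\min(x,r)+\min(y,s) \le \min(x+y,r+s)$). Everything else — the positive-part manipulation and the frame-theoretic splitting — is routine once that inequality is in hand.
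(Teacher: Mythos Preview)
Your argument is correct, and its overall architecture---prove the two inclusions, and for the reverse one take $z\in(c\ominus s)^\bot$, show $z\wedge(a\ominus t')$ is dominated by something in $[b]$, then split $[z]$ via a cover coming from Lemma~\ref{Lem:22}---mirrors the paper's proof closely. The forward inclusion is essentially identical to the paper's (the paper invokes Lemma~\ref{Lem:22}(2) and Lemma~\ref{Lem:19} for the two pieces, exactly as you do).

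The genuine difference lies in the algebraic input for the reverse inclusion. The paper first replaces $b$ by $a\wedge b$ (a reduction that itself requires a small argument via Lemma~\ref{Lem:19}, since one must check $[b]\le[a\wedge b]\vee a\blacktriangleleft1$) and then invokes the \emph{exact} identity \cite[3.3.1(12)]{Ball:2013}, which gives $(a-a\wedge b)\ominus r=(a\ominus r-a\wedge b)^+$ directly. You instead keep $b$ general and use the additive inequality $(x+y)\ominus t\le(x\ominus\sigma)+(y\ominus(t-\sigma))$, equivalent to concavity of truncation: $\overline{\lambda u+(1-\lambda)v}\ge\lambda\overline u+(1-\lambda)\overline v$. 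This is a true inequality, but your ``short direct verification\dots\ pointwise'' is not an algebraic proof from the axioms; it is really an appeal to the Yosida representation established in \cite{Ball:2013}. That is legitimate here (the paper explicitly assumes the results of \cite{Ball:2013}), but you should cite the representation rather than leave the step as a pointwise calculation. If you prefer an axiom-level justification, note that concavity does not appear among the identities of \cite[3.3]{Ball:2013} that this paper cites, so you would need to supply one.

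What each approach buys: the paper's route uses an exact identity and so needs only a single parameter $r$, at the cost of the preliminary $a\ge b$ reduction; your route avoids that reduction and handles general $a,b$ uniformly, at the cost of introducing the auxiliary pair $t'<t$ and relying on an inequality whose cleanest justification goes through the representation. Either way the frame-splitting endgame is the same.
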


\begin{proof}
By replacing $b$ by $a\wedge b$, we may assume without loss of generality that
$a\geq b$. The kernel $\left(  a-b\right)  ^{+}\blacktriangleleft1$ contains
$a\blacktriangleleft1$ since the map $a\longmapsto a\blacktriangleleft1$ is
order reversing by Lemma \ref{Lem:22}, and $\left(  a-b\right)  ^{+}%
\blacktriangleleft1$ contains $b$ by Lemma \ref{Lem:19}. Consider now a
truncation kernel $K$ which contains both $a\blacktriangleleft1$ and $b$; we
wish to show that
\[
K\supseteq\left(  a-b\right)  ^{+}\blacktriangleleft1=%
%TCIMACRO{\tbigvee _{r<1}}%
%BeginExpansion
{\textstyle\bigvee_{r<1}}
%EndExpansion
\left(  a-b\right)  ^{+}\ominus r^{\bot},
\]
i.e., $K\supseteq$ $\left(  a-b\right)  ^{+}\ominus r^{\bot}=\left(  a-a\wedge
b\right)  \ominus r^{\bot}$ for all $r<1$. But from \cite[3.3.1(12)]%
{Ball:2013} we know that
\[
\left(  a-a\wedge b\right)  \ominus r+a\wedge b=a\ominus r\vee\left(  a\wedge
b\right)  ,
\]
from which follows $\left(  a-a\wedge b\right)  \ominus r=\left(  a\ominus
r-a\wedge b\right)  ^{+}$, so that, in the end, what we need to show is that
$\left(  a\ominus r-a\wedge b\right)  ^{+\bot}\subseteq K$ for all $r<1$. For
that purpose fix $r$ and consider an element $x\geq0$ such that $x\wedge
\left(  a\ominus r-a\wedge b\right)  ^{+}=0$. Note that, since $\left(
a\ominus r-a\wedge b\right)  ^{+}$ may be considered to be the result of
disjointifying $a\ominus r$ and $a\wedge b$, it follows that
\[
2\left(  \left(  a\ominus r-a\wedge b\right)  ^{+}\vee\left(  a\wedge
b\right)  \right)  \geq a\ominus r\vee\left(  a\wedge b\right)  .
\]
(See the discussion of disjointification preceding \cite[3.3.8]{Ball:2013}.)

We claim that $x\wedge a\ominus r\leq2\left(  a\wedge b\right)  $, for%
\begin{gather*}
x\wedge a\ominus r \leq x\wedge2\left(  \left(  a\ominus r-a\wedge b\right)
^{+}\vee\left(  a\wedge b\right)  \right)  =\\
\left(  x\wedge2\left( a\ominus r-a\wedge b\right)  ^{+}\right)  \vee\left(
x\wedge2\left(  a\wedge b\right)  \right)  =x\wedge2\left(  a\wedge b\right)
\leq2\left(  a\wedge b\right)  .
\end{gather*}
But then we have
\begin{gather*}
\left[  x\right]  =\left[  x\right]  \wedge\top=\left[  x\right]
\wedge\left(  a\blacktriangleright r\vee a\blacktriangleleft1\right)  =\\
\newline\left[  x\wedge a\ominus r\right]  \vee\left(  \left[  x\right]
\wedge a\blacktriangleleft1\right)  \leq\left[  2\left(  a\wedge b\right)
\right]  \vee a\blacktriangleleft1\leq\left[  b\right]  \vee
a\blacktriangleleft1\leq K.
\end{gather*}
The conclusion is that $x\in K$, and this finishes the proof.
\end{proof}

For a convex $\ell$-subgroup $K$ of $A$, we let
\[
^{0}K\equiv\left\{  a\in\overline{A}:a\blacktriangleright0\subseteq K\right\}
\;\text{and\ }^{1}K\equiv\left\{  a\in\overline{A}:a\blacktriangleleft
1\subseteq K\right\}  .
\]
$^{0}K$ is a convex monoid with respect to bounded addition, in the sense that
$\overline{a+b}\in{}^{0}K$ whenever $a,b\in{}^{0}K$. And $^{1}K$ is a filter
on $\overline{A}$ which is disjoint from $^{0}K$ (if $K$ is proper) by Lemma
\ref{Lem:23}. In fact, if $K\in\mathcal{K}A$ then $^{0}K$ is just
$\overline{K}=\left\{  \overline{a}:a\in K^{+}\right\}  $.

\begin{corollary}
\label{Cor:3}Suppose $K\in\mathcal{K}A$. Then $a\in{}^{1}K$ and $b\in{}^{0}K$
imply $\left(  a-b\right)  ^{+}\in{}^{1}K\;$
\end{corollary}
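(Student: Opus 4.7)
The plan is to observe that this corollary is almost immediate from Lemma \ref{Lem:23} once the definitions of ${}^{0}K$ and ${}^{1}K$ are unpacked, together with the fact that a truncation kernel $K$ that contains two given truncation kernels as subsets must contain their join in $\mathcal{K}A$.

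First I would restate the hypotheses in the definitional form: $a\in{}^{1}K$ says precisely that $a\blacktriangleleft1\subseteq K$, and $b\in{}^{0}K$ says precisely that $b\blacktriangleright 0\subseteq K$. The goal $(a-b)^{+}\in{}^{1}K$ is the assertion that $(a-b)^{+}\blacktriangleleft 1\subseteq K$.

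Next I would invoke Lemma \ref{Lem:23}, which supplies the identity
\[
(a-b)^{+}\blacktriangleleft 1 \;=\; b\blacktriangleright 0 \,\vee\, a\blacktriangleleft 1
\]
in the frame $\mathcal{K}A$. By Proposition \ref{Prop:3}, this join is the smallest truncation kernel containing both $b\blacktriangleright 0$ and $a\blacktriangleleft 1$ as subsets of $A$. Since $K$ is itself a truncation kernel and, by hypothesis, contains both of these as subsets, it must contain their join. Thus $(a-b)^{+}\blacktriangleleft 1\subseteq K$, which is exactly the conclusion $(a-b)^{+}\in{}^{1}K$.

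There is really no obstacle to this argument; the only minor point is confirming that $(a-b)^{+}$ lies in $\overline{A}$ so that the membership $(a-b)^{+}\in{}^{1}K$ is meaningful. As in the proof of Lemma \ref{Lem:23}, one can replace $b$ by $a\wedge b$ without loss of generality (the kernels $b\blacktriangleright 0$ and $(a\wedge b)\blacktriangleright 0$ coincide on elements below $a$, and in any event $a\wedge b\in{}^{0}K$ whenever $b\in{}^{0}K$ since truncation kernels are convex $\ell$-subgroups), so one may assume $a\geq b$ and then $(a-b)^{+}=a-b\leq a\in\overline{A}$ lies in $\overline{A}$ as well. The whole corollary is therefore a one-line consequence of Lemma \ref{Lem:23}.
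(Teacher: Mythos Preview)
Your proposal is correct and follows essentially the same approach as the paper, which simply records that the corollary ``follows directly from Lemma \ref{Lem:23}.'' Your additional check that $(a-b)^{+}\in\overline{A}$ is a reasonable point to make explicit; the cleanest justification is that $0\leq(a-b)^{+}\leq a\in\overline{A}$ and $\overline{A}$ is downward closed in $A^{+}$ by axiom ($\mathfrak{T}1$).
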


\begin{proof}
This follows directly from Lemma \ref{Lem:23}.
\end{proof}

\begin{corollary}
Suppose $K\in\mathcal{K}A$. Then $K$ contains $a\blacktriangleleft1$ for some
$a\in\overline{A}$ iff $K=\bigcup_{^{1}K}a\blacktriangleleft1$.
\end{corollary}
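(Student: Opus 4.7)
The plan is to dispatch the reverse implication as essentially vacuous and to focus on the forward implication, whose real content is producing, for an arbitrary $x\in K^{+}$, an element $b\in {}^{1}K$ with $x\in b\blacktriangleleft 1$. The inclusion $\bigcup_{b\in {}^{1}K}b\blacktriangleleft 1\subseteq K$ is immediate from the definition of ${}^{1}K$, so only the opposite containment has content.

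For the reverse implication: if $K$ equals $\bigcup_{a\in {}^{1}K}a\blacktriangleleft 1$ then ${}^{1}K$ must be non-empty (since $K$ is), and any such $a$ witnesses $a\blacktriangleleft 1\subseteq K$.

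For the forward implication, fix $a_{0}\in {}^{1}K$ and take $x\in K^{+}$. First observe that $\overline{x}\in K$, and since $\overline{x}\blacktriangleright 0=[\overline{x}]\subseteq K$, also $\overline{x}\in {}^{0}K$. The decisive step is to apply Lemma~\ref{Lem:23} to $a_{0}$ and $\overline{x}$, yielding
\[
\overline{x}\blacktriangleright 0 \;\vee\; a_{0}\blacktriangleleft 1 \;=\; (a_{0}-\overline{x})^{+}\blacktriangleleft 1.
\]
Because $[\overline{x}]$ is absorbing and contains $\overline{x}$, it also contains $x$; hence the left side contains $x$, and therefore so does $(a_{0}-\overline{x})^{+}\blacktriangleleft 1$. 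Meanwhile Corollary~\ref{Cor:3}, applied to $a_{0}\in {}^{1}K$ and $\overline{x}\in {}^{0}K$, delivers $(a_{0}-\overline{x})^{+}\in {}^{1}K$. Setting $b\equiv (a_{0}-\overline{x})^{+}$ therefore exhibits a member of ${}^{1}K$ with $x\in b\blacktriangleleft 1$, so $K^{+}\subseteq\bigcup_{b\in {}^{1}K}b\blacktriangleleft 1$. Using that ${}^{1}K$ is a filter on $\overline{A}$ and that $(b_{1}\wedge b_{2})\blacktriangleleft 1\supseteq b_{1}\blacktriangleleft 1\cup b_{2}\blacktriangleleft 1$ by Lemma~\ref{Lem:22}(2), this conclusion lifts to arbitrary $x\in K$ by splitting $x=x^{+}-x^{-}$ and meeting the two witnessing $b$'s.

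The genuine obstacle is simply guessing the witness $b$. Once one sees that Lemma~\ref{Lem:23} is precisely engineered to splice an element of ${}^{1}K$ with an element of ${}^{0}K$ into a single member of ${}^{1}K$ whose associated kernel swallows the given $x$, no calculations with $\ominus$ or $\blacktriangleleft$ are needed beyond those already in the preceding lemmas.
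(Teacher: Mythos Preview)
Your proof is correct and follows essentially the same line as the paper's. The paper's argument is terser: given $a\in{}^{1}K$ and $b\in K\cap\overline{A}$, it invokes Corollary~\ref{Cor:3} to get $(a-b)^{+}\in{}^{1}K$ and then Lemma~\ref{Lem:19} directly (rather than Lemma~\ref{Lem:23}) to conclude $b\in(a-b)^{+}\blacktriangleleft1$, leaving the passage from $\overline{K}$ to all of $K$ implicit via the absorbing property. Your route through Lemma~\ref{Lem:23} is a slight indirection but perfectly valid, and your explicit handling of the lift from $K^{+}$ to $K$ is more careful than necessary (since each $b\blacktriangleleft1$ is a convex $\ell$-subgroup, $|x|\in b\blacktriangleleft1$ already forces $x\in b\blacktriangleleft1$), but not wrong.
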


\begin{proof}
Suppose $a\blacktriangleleft1\subseteq K$ and $b\in K$ for some $a,b\in
\overline{A}$. Then $\left(  a-b\right)  ^{+}\in{}^{1}K$ by Corollary
\ref{Cor:3}, and $b\in\left(  a-b\right)  ^{+}\blacktriangleleft1$ by Lemma
\ref{Lem:19}.
\end{proof}

\section{Representing $A$ in $\mathcal{RK}A$\label{Sec:5}}

We exhibit a natural $\mathbf{AT}$-injection $A\rightarrow\mathcal{RK}A$.
Though intuitive and simple enough, this representation is not functorial. It
is, however, the most important step in the development of a fully functorial
representation of truncs and their morphisms, culminating in Theorems
\ref{Thm:4} and \ref{Thm:3}.

\subsection{ The frame map $\protect\underline{a}$}

\begin{definition}
\label{Def:4}For $a\in A^{+}$, define the map $\underline{a}:\left\{  \left(
r,\infty\right)  :r\in\mathbb{Q}\right\}  \rightarrow\mathcal{K}A$ by the rule%
\[
\underline{a}\left(  r,\infty\right)  \equiv\left\{
\begin{array}
[c]{ll}%
a\blacktriangleright r, & r\geq0\\
\top & r<0
\end{array}
\right.  ,\;r\in\mathbb{Q}.
\]

\end{definition}

\begin{proposition}
Each $\underline{a}$ extends to a unique frame map $\mathcal{O}\mathbb{R}%
\rightarrow\mathcal{K}A$, which we also denote $\underline{a}$.
\end{proposition}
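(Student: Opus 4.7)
The plan is to recognize that $\rho:\mathbb{Q}\to\mathcal{K}A$, $\rho(r):=\underline{a}(r,\infty)$, is a descending \emph{scale} in $\mathcal{K}A$ — i.e.\ a function satisfying (S1) $\rho(r)\prec\rho(s)$ for $s<r$, (S2) $\rho(r)=\bigvee_{s>r}\rho(s)$, (S3) $\bigvee_r\rho(r)=\top$, and (S4) $\bigwedge_r\rho(r)=\bot$ — and then to invoke the classical Banaschewski result that such a scale determines a unique frame map $\mathcal{O}\mathbb{R}\to\mathcal{K}A$ whose value on $(r,\infty)$ is $\rho(r)$. This bypasses having to wrestle with any particular presentation of $\mathcal{O}\mathbb{R}$ by hand.

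Each of (S1)--(S4) reduces to an already established result. Condition (S1) is Lemma \ref{Lem:9} in the range $r,s\in\mathbb{Q}^+$, with the boundary cases trivial since $\rho(r)=\top$ for $r<0$; (S2) is Corollary \ref{Cor:4} on $\mathbb{Q}^+$, and similarly immediate below zero; and (S3) is trivial because $\rho(-1)=\top$. The crucial condition is (S4), and since $\rho(r)=\top$ for $r<0$ contributes nothing to the meet and the integers are cofinal in the positive rationals, (S4) reduces to $\bigwedge_{n\in\mathbb{N}}[a\ominus n]=\bot$ in $\mathcal{K}A$. This is precisely where truncation axiom $(\mathfrak{T}4)$ enters: by Corollary \ref{Cor:2} and Proposition \ref{Prop:3}, $[a\ominus n]\subseteq[a\ominus n]^{\ast\ast}=(a\ominus n)^{\bot\bot}$, hence
\[
\bigwedge_{n\in\mathbb{N}}[a\ominus n]\;\subseteq\;\bigcap_{n\in\mathbb{N}}(a\ominus n)^{\bot\bot}\;=\;0,
\]
using $(\mathfrak{T}4)$ at the equality.

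For uniqueness — whether read off from Banaschewski's theorem or verified by hand — one observes that any frame map $\phi$ extending $\underline{a}$ must satisfy $\phi(-\infty,q)=\bigvee_{r<q}\rho(r)^{\ast}$, forced by the relations $\phi(r,\infty)\wedge\phi(-\infty,q)=\bot$ for $r\geq q$ and $\phi(r,\infty)\vee\phi(-\infty,q)=\top$ for $r<q$, together with the general frame identity $x\vee y=\top\Rightarrow y\geq x^{\ast}$. Substituting the prescribed $\rho$ yields $\phi(-\infty,q)=a\blacktriangleleft q$ for $q>0$ and $\bot$ for $q\leq 0$, after which every $\phi(p,q)=\phi(p,\infty)\wedge\phi(-\infty,q)$ is determined. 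The only step that relies on specifically trunc-theoretic input rather than frame manipulation is the invocation of $(\mathfrak{T}4)$ for (S4); I do not anticipate any genuine obstacle.
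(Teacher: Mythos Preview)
Your framework is sound and matches the paper's closely --- conditions (S1)--(S3) are exactly what the paper checks via Lemma~\ref{Lem:9} and Corollary~\ref{Cor:4} --- but condition (S4) is misstated, and this is where the argument fails. The correct fourth condition for the extension theorem (whether one cites Banaschewski or \cite[3.1.2]{BallHager:1991}, as the paper does) is not $\bigwedge_r\rho(r)=\bot$ but rather $\bigvee_r\rho(r)^{\ast}=\top$. These are not equivalent, even in a regular frame: take $L=\mathcal{O}\mathbb{R}$ and $\rho(r)=(-e^{-r},e^{-r})$; then (S1)--(S4) all hold, yet $\bigvee_r\rho(r)^{\ast}=\mathbb{R}\smallsetminus\{0\}\neq\top$, and no frame map $\mathcal{O}\mathbb{R}\to\mathcal{O}\mathbb{R}$ realizes this scale. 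In effect your (S4) only pins down a map from the frame of the half-extended reals $(-\infty,\infty]$, not from $\mathcal{O}\mathbb{R}$.

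Your $(\mathfrak{T}4)$ argument correctly establishes $\bigwedge_n[a\ominus n]\subseteq\bigcap_n(a\ominus n)^{\bot\bot}=0$, hence your (S4); but it does not yield $\bigvee_n[a\ominus n]^{\ast}=\top$. From $(\mathfrak{T}4')$ one gets only that $\bigcup_n(a\ominus n)^{\bot}$ is order-dense in $A^+$, i.e.\ that $\bigl(\bigvee_n[a\ominus n]^{\ast}\bigr)^{\ast}=\bot$, which is strictly weaker. The paper closes this gap with a direct computation: for arbitrary $b\in A^+$ one takes $c=a$ and shows $(n\overline{b}-a)^{+}\wedge(a\ominus n)=0$ for every $n$, whence $\overline{b}$ (and then $b$, by absorption) lies in the truncation kernel generated by $\bigcup_n(a\ominus n)^{\bot}$. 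That step is the genuinely trunc-theoretic input, and it does more work than a bare appeal to $(\mathfrak{T}4)$.
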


\begin{proof}
According to \cite[3.1.2]{BallHager:1991}, this amounts to establishing three things.

\begin{itemize}
\item $\underline{a}\left(  s,\infty\right)  \prec\underline{a}\left(
r,\infty\right)  $ for $r<s$ in $\mathbb{Q}$.

\item $\underline{a}\left(  r,\infty\right)  =\bigvee_{r<s}\underline{a}%
\left(  s,\infty\right)  $ for all $r\in\mathbb{Q}$.

\item and $\bigvee_{\mathbb{Q}}\underline{a}\left(  r,\infty\right)
=\bigvee_{\mathbb{Q}}\underline{a}\left(  r,\infty\right)  ^{\ast}=\top$.
\end{itemize}

\noindent The first point is the content of Lemma \ref{Lem:22}(3), the second
is Corollary \ref{Cor:4}, and the third point follows from the claim that
$\bigvee_{\mathbb{N}}\left[  a\ominus n\right]  ^{\ast}=\top$ in
$\mathcal{K}A$. To establish this claim, in turn, it suffices to show that
\[
\forall~b\in A^{+}~\exists~c\in A^{+}~\forall~n\in\mathbb{N}~\exists
~i\in\mathbb{N}~\left(  \left(  n\overline{b}-c\right)  ^{+}\wedge a\ominus
i=0\right)  .
\]
For, when we denote the convex $\ell$-subgroup $\bigcup_{\mathbb{N}}\left[
a\ominus n\right]  ^{\ast}$ by $B$, the condition displayed above simply
asserts that $\overline{b}$ lies in the archimedean kernel generated by $B$ by
virtue of satisfying Lemma \ref{Lem:4}. And if so, of course, it follows that
$b\in\left[  B\right]  $ by Lemma \ref{Lem:5}. But satisfying the displayed
condition is easy. Given $b\in A^{+}$, take $c$ to be $a$, and, upon being
presented with $n$, take $i$ to be $n$. The condition becomes
\[
\left(  n\overline{b}-a\right)  ^{+}\wedge a\ominus n=n\left(  \left(
\overline{b}-a/n\right)  ^{+}\wedge\left(  a/n-\overline{a/n}\right)  \right)
=0.
\]
This follows from \cite[3.3.1(1)]{Ball:2013}, with $a$ and $b$ there taken to
be $a/n$ and $b$ here.
\end{proof}

\begin{lemma}
\label{Lem:15}For $a\in A^{+}$ and $0\leq r<1$ in $\mathbb{Q}$, $\left[
\overline{a}\ominus r\right]  =\left[  a\ominus r\right]  =\left[
\overline{a\ominus r}\right]  .$
\end{lemma}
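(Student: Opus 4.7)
The lemma asserts a chain of two equalities. The right-hand equality $[a\ominus r]=[\overline{a\ominus r}]$ is an instance of the general fact $[b]=[\overline{b}]$ for any $b\in A^{+}$, already implicit in Corollary \ref{Cor:2}: $\overline{b}\leq b$ gives $[\overline{b}]\subseteq[b]$, while the absorbing property of $[\overline{b}]$ forces $b\in[\overline{b}]$, giving the reverse inclusion. Apply this with $b=a\ominus r$.

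For the left-hand equality $[\overline{a}\ominus r]=[a\ominus r]$, the case $r=0$ reduces to $[\overline{a}]=[a]$, another instance of the same principle. For $0<r<1$, the inclusion $\subseteq$ is immediate: $\overline{a}\leq a$ and the monotonicity of $\ominus$ in its first argument (\cite[3.3.1]{Ball:2013}) give $\overline{a}\ominus r\leq a\ominus r$, so $\overline{a}\ominus r\in[a\ominus r]$ by convexity.

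The substantive direction is $[a\ominus r]\subseteq[\overline{a}\ominus r]$. By the absorbing property of truncation kernels it suffices to show $\overline{a\ominus r}\in[\overline{a}\ominus r]$. A first calculation using (T1) applied to $a\ominus r$ and $a$, combined with monotonicity of truncation, identifies $\overline{a\ominus r}=(a\ominus r)\wedge\overline{a}$; in particular $\overline{a}\ominus r\leq\overline{a\ominus r}$, so the problem reduces to producing the reverse domination $\overline{a\ominus r}\leq N(\overline{a}\ominus r)$ for some integer $N$. Once this holds, $\overline{a\ominus r}$ lies in the principal convex $\ell$-subgroup $\langle\overline{a}\ominus r\rangle$ and therefore in $[\overline{a}\ominus r]$.

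The main obstacle is exhibiting such an $N$, and this is where the hypothesis $r<1$ is indispensable: in the functional picture, $\overline{a\ominus r}$ is bounded above by $1$ while $\overline{a}\ominus r$ is bounded below by $1-r$ on the region where $a\ominus 1$ does not vanish, so $N=\lceil 1/(1-r)\rceil$ works. To recover this abstractly, I would chain the identity
\[
\overline{a\ominus r}=(\overline{a}\ominus r)+r\overline{(a\ominus 1)/r},
\]
obtained from $\overline{a\ominus r}=(a\ominus r)\wedge\overline{a}$ via the lattice identity $x\wedge y=x+y-x\vee y$, \cite[3.3.1(12)]{Ball:2013} applied with $b=\overline{a}$, \cite[3.3.6]{Ball:2013} giving $(a\ominus 1)\ominus r=a\ominus(1+r)$, and the identity $\overline{a/r}=\overline{\overline{a}/r}$ for $0<r\leq 1$ (a direct consequence of (T1) together with $r\overline{a/r}\leq\overline{a}$), with the estimate $r\overline{(a\ominus 1)/r}\leq\tfrac{r}{1-r}(\overline{a}\ominus r)$ in the divisible $\ell$-group. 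Multiplying through by an integer $N\geq 1/(1-r)$ yields the required bound, completing the proof.
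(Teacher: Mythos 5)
Your argument is correct in outline and reaches the same quantitative heart of the matter as the paper, but by a different route. The paper's proof is a one-step application of the convex-combination identity \cite[3.3.5]{Ball:2013} (with $p,q,a$ there taken to be $r$, $1-r$, $a/r$), which yields the \emph{exact} equation
\[
\overline{a}\ominus r=\left(1-r\right)\overline{\left(a\ominus r\right)/\left(1-r\right)},
\]
after which all three kernels coincide at once, since nonzero scalar multiples and truncations of a generator generate the same truncation kernel. You instead sandwich: $\overline{a}\ominus r\leq\overline{a\ominus r}\leq\frac{1}{1-r}\left(\overline{a}\ominus r\right)$, so that $\overline{a\ominus r}$ and $\overline{a}\ominus r$ generate the same convex $\ell$-subgroup, and then invoke absorption twice. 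That is logically just as good, and your identification of $r<1$ as the source of the constant $\frac{1}{1-r}$ is exactly right; indeed your two-sided estimate is the inequality form of the paper's exact identity. The reductions you perform along the way (the identity $\overline{a\ominus r}=(a\ominus r)\wedge\overline{a}$ from ($\mathfrak{T}1$) and monotonicity, and the decomposition $\overline{a\ominus r}=(\overline{a}\ominus r)+r\overline{(a\ominus1)/r}$, which boils down to $b=b\ominus r+r\overline{b/r}$ applied to $b=a\ominus1$) all check out.

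The one soft spot is the estimate $r\overline{\left(a\ominus1\right)/r}\leq\frac{r}{1-r}\left(\overline{a}\ominus r\right)$, which you justify only by the functional picture. Note that it does \emph{not} follow from monotonicity of truncation applied to the arguments, since $(1-r)(a\ominus1)\leq r(a\ominus r)$ is false in general (e.g.\ for $r<1/2$ and $a$ large). After dividing by $1-r$ it is equivalent to $\overline{\left(a\ominus1\right)/r}\leq\overline{\left(a\ominus r\right)/\left(1-r\right)}$, and the cleanest way to get that from the toolkit is precisely the \cite[3.3.5]{Ball:2013} identity the paper uses --- or else an explicit appeal to the pointed Yosida representation of paper I, which you should make if you want to rest on the functional verification. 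So the approach is sound, but as written this one inequality is asserted rather than derived, and closing it honestly essentially re-imports the paper's key identity.
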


\begin{proof}
In \cite[3.3.5]{Ball:2013}, take $p$, $q$, and $a$ there to be $r$, $1-r$, and
$a/r$ here, to get
\begin{gather*}
\overline{a}=r\overline{a/r}+\left(  1-r\right)  \overline{r\left(
a/r-\overline{a/r}\right)  /\left(  1-r\right)  }=\\
r\overline{\overline{a}/r}+\left(  1-r\right)  \overline{r\left(
a/r\ominus1\right)  /\left( 1-r\right)  }= r\overline{\overline{a}/r}+\left(
1-r\right)  \overline{\left(  a\ominus r\right)  /\left(  1-r\right)  }.
\end{gather*}
This rearranges to $\overline{a}\ominus r=\overline{a}-r\overline{\overline
{a}/r}=\left(  1-r\right)  \overline{\left(  a\ominus r\right)  /\left(
1-r\right)  }$, hence
\[
\left[  \overline{a}\ominus r\right]  =\left[  \left(  1-r\right)
\overline{\left(  a\ominus r\right)  /\left(  1-r\right)  }\right]  =\left[
\overline{\left(  a\ominus r\right)  /\left(  1-r\right)  }\right]  =\left[
\left(  a\ominus r\right)  /\left(  1-r\right)  \right]  .
\]
Since any nonzero multiple of a generator of a truncation ideal is itself a
generator, this works out to $\left[  a\ominus r\right]  =\left[
\overline{a\ominus r}\right]  $.
\end{proof}

\begin{lemma}
\label{Lem:18}For $a\in A^{+}$ and $r\in\mathbb{F}$,%
\[
\underline{a}\left(  -\infty,r\right)  =\underline{-a}\left(  -r,\infty
\right)  =\left\{
\begin{array}
[c]{ll}%
a\blacktriangleleft r, & r>0\\
\bot & r\leq0
\end{array}
\right.
\]

\end{lemma}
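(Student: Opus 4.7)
My plan is to compute $\underline{a}(-\infty,r)$ from the already-defined values $\underline{a}(s,\infty) = a\blacktriangleright s$ (for $s \geq 0$) and $\top$ (for $s<0$), using only that $\underline{a}$ is a frame map.

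The first step is the purely frame-theoretic identity
\[
\underline{a}(-\infty,r)\;=\;\bigvee_{s<r}\underline{a}(s,\infty)^{\ast},
\]
where $^{\ast}$ denotes the pseudocomplement in $\mathcal{K}A$. The $\leq$ direction uses $(-\infty,r)=\bigvee_{s<r}(-\infty,s)$ in $\mathcal{O}\mathbb{R}$ together with the disjointness $(-\infty,s)\wedge(s,\infty)=\bot$, which forces $\underline{a}(-\infty,s)\leq\underline{a}(s,\infty)^{\ast}$ for each $s$. The $\geq$ direction uses that $(s,\infty)\vee(-\infty,r)=\top$ for $s<r$, so $\underline{a}(s,\infty)\vee\underline{a}(-\infty,r)=\top$ in $\mathcal{K}A$; the standard frame distributivity observation ``$u\vee v=\top$ implies $u^{\ast}\leq v$'' then gives $\underline{a}(s,\infty)^{\ast}\leq\underline{a}(-\infty,r)$, and one takes the join over $s<r$.

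With the identity in hand I substitute Definition \ref{Def:4}. For $r\leq 0$ every $s<r$ is negative, so $\underline{a}(s,\infty)=\top$ and $\underline{a}(s,\infty)^{\ast}=\bot$, yielding $\underline{a}(-\infty,r)=\bot$. For $r>0$ only the $s\in[0,r)$ contribute, and by Proposition \ref{Prop:3} together with Corollary \ref{Cor:2} the pseudocomplement $[a\ominus s]^{\ast}$ coincides with the polar $(a\ominus s)^{\bot}$. The $s=0$ term $a^{\bot}$ is absorbed by any $(a\ominus s)^{\bot}$ with $0<s<r$, since $a\geq a\ominus s$ gives $a^{\bot}\subseteq(a\ominus s)^{\bot}$. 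What remains is $\bigvee_{0<s<r}(a\ominus s)^{\bot}$, which is precisely the definition of $a\blacktriangleleft r$.

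The second equality $\underline{a}(-\infty,r)=\underline{-a}(-r,\infty)$ should follow either by unwinding the definition of $\underline{-a}$ (presumably obtained by composing $\underline{a}$ with the involution $(r,\infty)\mapsto(-\infty,-r)$ of $\mathcal{O}\mathbb{R}$) or by running the same analysis with $-a$ in place of $a$. The main obstacle throughout is the first step: because frame maps need not respect pseudocomplements, one cannot simply push $^{\ast}$ through $\underline{a}$, and must instead argue the two inequalities separately by the distributivity trick above.
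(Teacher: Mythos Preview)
Your argument is correct and follows the same route as the paper: the paper simply cites the general principle from \cite{BallHager:1991} that $f(-\infty,r)=(-f)(-r,\infty)=\bigvee_{s<r}f(s,\infty)^{\ast}$ for any $f\in\mathcal{R}L$, and then substitutes $f=\underline{a}$, whereas you spell out the proof of this identity from scratch via the two distributivity inequalities. Your treatment of the $\underline{-a}$ clause is admittedly sketchy, but no more so than the paper's, which likewise defers this to the cited reference (where $-f$ is defined by precomposition with the reflection $U\mapsto -U$ on $\mathcal{O}\mathbb{R}$).
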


\begin{proof}
In light of the fact that $a\blacktriangleleft r=\bigvee_{0<s<r}\left[
a\ominus s\right]  ^{\ast}$ for $r\geq0$, this is an application of a general
principle (\cite[3.1.1, 3.1.3]{BallHager:1991}): for a frame $L$, a frame map
$f\in\mathcal{R}L$, and for $r\in\mathbb{Q}$,
\[
f\left(  -\infty,r\right)  =\left(  -f\right)  \left(  -r,\infty\right)
=\bigvee_{s<r}f\left(  s,\infty\right)  ^{\ast}.
\]
In the present situation, take $f\equiv\underline{a}$.
\end{proof}

We denote the underscore map $a\longmapsto\underline{a}$ by $\kappa
_{A}:A\rightarrow\mathcal{RK}A$, and we denote its range by
\[
\underline{A}\equiv\left\{  \underline{a}:a\in A\right\}  \subseteq
\mathcal{RK}A.
\]
We show in Theorem \ref{Thm:6} that $\kappa_{A}$ is an isomorphism
$A\rightarrow\underline{A}$. A little ground clearing is necessary first.

\subsection{$\kappa_{A}$ is an isomorphism}

We begin by showing that $\kappa$ preserves truncation and diminution.

\begin{lemma}
\label{Lem:11}For any $a\in A^{+}$,
\[
\underline{a\ominus1}=\left(  \underline{a}-1\right)  ^{+}\;\text{and
\ }\underline{\overline{a}}=\underline{a}\wedge1.
\]
It follows that $\underline{a\ominus r}=\left(  \underline{a}-r\right)  ^{+}$
and $r\overline{a/r}=a\wedge r$ for $r\in\mathbb{Q}$.
\end{lemma}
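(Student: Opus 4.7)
The plan is to verify each identity by comparing frame maps $\mathcal{O}\mathbb{R}\to\mathcal{K}A$ on the subbasic generators $(r,\infty)$, $r\in\mathbb{Q}$, since by the uniqueness in \cite[3.1.2]{BallHager:1991} a frame map out of $\mathcal{O}\mathbb{R}$ is determined by these values. For this I use the standard frame-theoretic descriptions of the $\mathbf{W}$-operations on $\mathcal{R}L$ (cf.\ \cite[3.1.1, 3.1.3]{BallHager:1991}):
\[
(f-c)(r,\infty)=f(r+c,\infty),\qquad (f\wedge g)(r,\infty)=f(r,\infty)\wedge g(r,\infty),
\]
together with $(f\vee 0)(r,\infty)=f(r,\infty)$ for $r\geq 0$ and $\top$ for $r<0$, and the rational constant $c\in\mathcal{R}L$ satisfying $c(r,\infty)=\top$ iff $r<c$.

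For the first identity, $(\underline{a}-1)^+(r,\infty)$ unfolds to $\top$ when $r<0$ and to $\underline{a}(r+1,\infty)=[a\ominus(r+1)]$ when $r\geq 0$. On the other side, $\underline{a\ominus 1}(r,\infty)$ is $\top$ for $r<0$ and $[(a\ominus 1)\ominus r]=[a\ominus(r+1)]$ for $r\geq 0$, the last equality being \cite[3.3.6]{Ball:2013}. Hence the two frame maps coincide on generators and so are equal.

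For the second identity, $(\underline{a}\wedge 1)(r,\infty)$ is $\top$ for $r<0$, $[a\ominus r]$ for $0\leq r<1$, and $\bot$ for $r\geq 1$, while $\underline{\overline{a}}(r,\infty)$ is $\top$ for $r<0$ and $[\overline{a}\ominus r]$ for $r\geq 0$. The range $0\leq r<1$ is exactly Lemma \ref{Lem:15}. For $r\geq 1$ one needs $\overline{a}\ominus r=0$; this reduces via \cite[3.3.6]{Ball:2013} to the base case $\overline{a}\ominus 1=0$, which in turn is the algebraic identity $\overline{\overline{a}}=\overline{a}$, immediate from $(\mathfrak{T}1)$ by taking $a:=\overline{a}$, $b:=a$ to get $\overline{a}\leq\overline{\overline{a}}$, and $a:=\overline{a}$, $b:=\overline{a}$ for the reverse. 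For the "it follows" clauses, $\underline{a\ominus r}=(\underline{a}-r)^+$ for positive rational $r$ is obtained by repeating the first computation with $r$ in place of $1$ throughout, and the second clause (which I read as $\underline{r\overline{a/r}}=\underline{a}\wedge r$, since the literal equation is meaningless in a general trunc) follows by applying the second identity to $a/r$ and scaling by $r$, once $\kappa_A$ is known to commute with rational scalar multiplication, a fact to be recorded as part of Theorem \ref{Thm:6}. The only real obstacle is the boundary case $r\geq 1$ above; the rest is careful bookkeeping with the frame-map descriptions.
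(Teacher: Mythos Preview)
Your treatment of the first identity is the same as the paper's: both compute $\underline{a\ominus 1}(r,\infty)$ and $(\underline{a}-1)^+(r,\infty)$ on the generators and match them case by case.

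For the second identity your route diverges. You compare $\underline{\overline{a}}$ and $\underline{a}\wedge 1$ directly on generators, invoking Lemma~\ref{Lem:15} for the range $0\leq r<1$ and the idempotence $\overline{\overline{a}}=\overline{a}$ to handle $r\geq 1$. The paper instead derives the second identity algebraically from the first, writing
\[
\underline{\overline{a}}=\underline{a}-\underline{a\ominus1}=\underline{a}-(\underline{a}-1)^{+}=\underline{a}+\bigl((1-\underline{a})\wedge 0\bigr)=\underline{a}\wedge 1.
\]
This is shorter, but the first equality tacitly uses that $\kappa_A$ respects differences of positive elements, a fact only established afterwards in Theorem~\ref{Thm:6}. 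Your direct computation avoids that forward reference at the cost of importing Lemma~\ref{Lem:15}, which the paper's proof of Lemma~\ref{Lem:11} does not need. Both ``it follows'' clauses are left implicit in the paper; your handling of them, including the reading of the last clause as $\underline{r\,\overline{a/r}}=\underline{a}\wedge r$, is sound.
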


Reader beware, for Lemma \ref{Lem:11} can easily be misunderstood. The $1$ and
the $r$ which appear on the left sides of the first and third equations,
respectively, refer to scalars, while the same symbols on the right sides of
the first three equations refer to the corresponding constant frame functions
$\mathcal{O}\mathbb{R}\rightarrow\mathcal{K}A$. Such a constant function is
given by the rule%
\[
r\left(  s,\infty\right)  =\left\{
\begin{array}
[c]{cc}%
\bot, & s\geq r\\
\top, & s<r
\end{array}
\right.  ,\text{\ or }r\left(  -\infty,s\right)  =\left\{
\begin{array}
[c]{cc}%
\top, & s>r\\
\bot, & s\leq r
\end{array}
\right.  .
\]
\emph{This constant function need not lie in }$\emph{\underline{\emph{A}}}%
$\emph{.}

\begin{proof}
We have, for $r\in\mathbb{Q}$,
\[
\underline{a\ominus1}\left(  r,\infty\right)  =\left\{
\begin{array}
[c]{ll}%
\left[  \left(  a\ominus1\right)  \ominus r\right]  , & r\geq0\\
\top, & r<0
\end{array}
\right.  =\left\{
\begin{array}
[c]{ll}%
\left[  a\ominus\left(  1+r\right)  \right]  , & r\geq0\\
\top & r<0
\end{array}
\right.
\]
On the other hand we have
\[
\left(  \underline{a}-1\right)  ^{+}\left(  r,\infty\right)  =\left(
\underline{a}-1\right)  \left(  r,\infty\right)  \vee0\left(  r,\infty\right)
=\underline{a}\left(  r+1,\infty\right)  \vee0\left(  r,\infty\right)  ,
\]
with the equalities justified by 3.1.3 and 3.1.5, respectively, of
\cite[3.1.3]{BallHager:1991}. Since%
\[
\underline{a}\left(  r+1,\infty\right)  =\left\{
\begin{array}
[c]{ll}%
\left[  a\ominus\left(  r+1\right)  \right]  , & r+1\geq0\\
\top & r+1<0
\end{array}
\right.  \;\text{and\ }0\left(  r,\infty\right)  =\left\{
\begin{array}
[c]{ll}%
\bot, & r\geq0\\
\top & r<0
\end{array}
\right.  ,
\]
we see by inspection that the $\underline{a\ominus1}\left(  r,\infty\right)
=\left(  \underline{a}-1\right)  ^{+}\left(  r,\infty\right)  $ for all
$r\in\mathbb{Q}$. From this follows%
\[
\underline{\overline{a}}=\underline{a}-\underline{a\ominus1}=\underline{a}%
-\left(  \underline{a}-1\right)  ^{+}=\underline{a}+\left(  1-\underline{a}%
\right)  \wedge0=\underline{a}\wedge 1. \qedhere
\]

\end{proof}

\begin{theorem}
\label{Thm:6}$\underline{A}$ is a subtrunc of $\mathcal{RK}A$, and $\kappa
_{A}\equiv\left(  a\longmapsto\underline{a}\right)  $ is a trunc isomorphism
$A\rightarrow\underline{A}$.
\end{theorem}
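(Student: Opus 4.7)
Since $\mathcal{RK}A$ is a unital trunc (in particular an $\ell$-group), once we know that $\kappa_A$ is a trunc morphism $A\rightarrow\mathcal{RK}A$, its image $\underline{A}$ is automatically a subtrunc. So the content of the theorem is that $\kappa_A$ is a trunc morphism and is injective. The approach is first to extend $\kappa_A$ from $A^+$ (where it is defined) to all of $A$ by $\underline{a}:=\underline{a^+}-\underline{a^-}$, and then to verify the trunc axioms one at a time.

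The lattice axioms on $A^+$ unwind immediately. In $\mathcal{R}L$ one has the general identities $(f\vee g)(r,\infty)=f(r,\infty)\vee g(r,\infty)$ and $(f\wedge g)(-\infty,r)=f(-\infty,r)\vee g(-\infty,r)$, so preservation of $\vee$ is exactly Lemma \ref{Lem:22}(1); dually, using the description $\underline{a}(-\infty,r)=a\blacktriangleleft r$ from Lemma \ref{Lem:18}, preservation of $\wedge$ is exactly Lemma \ref{Lem:22}(2). Preservation of truncation (and of $\ominus r$) is Lemma \ref{Lem:11}: $\underline{\overline{a}}=\underline{a}\wedge 1$.

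The main obstacle is preservation of addition, $\underline{a+b}=\underline{a}+\underline{b}$ for $a,b\in A^+$. Using the standard formula $(f+g)(r,\infty)=\bigvee_{s+t=r,\,s,t\in\mathbb{Q}}f(s,\infty)\wedge g(t,\infty)$ in $\mathcal{RK}A$ and collapsing the contributions with $s<0$ or $t<0$ via Corollary \ref{Cor:4}, this reduces to the identity
\[
[(a+b)\ominus r]=\bigvee_{\substack{s+t=r\\ s,t\in\mathbb{Q}^+}}[a\ominus s]\wedge[b\ominus t],\qquad r\in\mathbb{Q}^+.
\]
The $\supseteq$ inclusion would follow from the pointwise-style inequality $(a\ominus s)\wedge(b\ominus t)\leq(a+b)\ominus(s+t)$, which I expect to be available from the identities in Section 3.3 of \cite{Ball:2013}. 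For the $\subseteq$ direction, writing $K$ for the right-hand side, the strategy is to show that $a+b\leq r$ in the quotient trunc $A/K$: all the meets $(a\ominus s)\wedge(b\ominus t)$ vanish in $A/K$, and density of $\mathbb{Q}$ combined with the archimedean property of $A/K$ should then force the additive bound $a+b\leq r$, so that $(a+b)\ominus r\in K$. This archimedean transfer from disjointness of shifted meets to an additive inequality is the delicate step, and the one I expect to occupy the bulk of the proof.

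Finally, injectivity is essentially free. If $\underline{a}=0$ for $a\in A^+$, then by Lemma \ref{Lem:1} we have $[a]=[a\ominus 0]=\underline{a}(0,\infty)=\bot$, whence $a=0$. For general $a\in A$, the splitting $a=a^+-a^-$ together with the already-established preservation of $\wedge$ gives $\underline{a^+}\wedge\underline{a^-}=\underline{a^+\wedge a^-}=0$; combined with $\underline{a^+}=\underline{a^-}$ (from $\underline{a}=0$) this forces $\underline{a^+}=\underline{a^-}=0$, and the positive case then yields $a^+=a^-=0$.
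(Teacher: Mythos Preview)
Your outline matches the paper's proof almost exactly: reduce to a map on $A^+$, check meets via Lemma~\ref{Lem:22} (the paper does this directly with the identity $[a_1\ominus r]\wedge[a_2\ominus r]=[(a_1\wedge a_2)\ominus r]$, which is the same content), check truncation via Lemma~\ref{Lem:11}, and observe injectivity from $\underline{a}(0,\infty)=[a]$. The paper invokes the same folklore you do to pass from $A^+$ to $A$.

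The one substantive divergence is the hard ($\subseteq$) direction of addition. You propose to quotient by $K=\bigvee_{s+t=r}[a\ominus s]\wedge[b\ominus t]$ and argue that the vanishing of all $(a\ominus s)\wedge(b\ominus t)$ in $A/K$, together with the archimedean property, forces $(a+b)\ominus r=0$ there. This is plausible, but you correctly flag it as the delicate step and do not carry it out; the implication ``$(a\ominus s)\wedge(b\ominus t)=0$ for all rational $s+t=r$ $\Longrightarrow$ $(a+b)\ominus r=0$'' is not an identity from \cite{Ball:2013} and needs real work. The paper does \emph{not} pass to a quotient. Instead it proves directly, for each $\varepsilon>0$, that
\[
(\underline{a_1}+\underline{a_2})(r,\infty)\ \geq\ \underline{a_1+a_2}(r+2\varepsilon,\infty),
\]
by covering $\top=\bigvee_{r_1}\underline{a_1}(r_1-\varepsilon,r_1+\varepsilon)$, meeting with $x=\underline{a_1+a_2}(r+2\varepsilon,\infty)$, and showing that on each slice $x\wedge\underline{a_1}(-\infty,r_1+\varepsilon)\leq\underline{a_2}(r+\varepsilon-r_1,\infty)$; Corollary~\ref{Cor:4} then removes the $\varepsilon$. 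So while your plan is sound, the missing step is exactly where the paper spends its effort, and the paper's $\varepsilon$-covering argument is what you would ultimately need to supply (in the quotient or in the frame) to close the gap.
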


\begin{proof}
It is folklore that, for $\ell$-groups $B$ and $C$, any map $B^{+}\rightarrow
C^{+}$ which preserves meets and sums extends to a unique $\ell$-homomorphism
$B\rightarrow C$. Therefore we need only check that the restriction of the
underscore map to $A^{+}$ preserves meets and sums. So consider $a_{i}\in
A^{+}$ and $r\in\mathbb{F}$.
\begin{gather*}
\left(  \underline{a_{1}}\wedge\underline{a_{2}}\right)  \left(
r,\infty\right)  = \underline{a_{1}}\left(  r,\infty\right)  \wedge
\underline{a_{2}}\left(  r,\infty\right)  = \left\{
\begin{array}
[c]{ll}%
\left[  a_{1}\ominus r\right]  \wedge\left[  a_{2}\ominus r\right]  , &
r\geq0\\
\top, & r<0
\end{array}
\right.  =\\
\left\{
\begin{array}
[c]{ll}%
\left[  a_{1}\ominus r\wedge a_{2}\ominus r\right]  , & r\geq0\\
\top, & r<0
\end{array}
\right.  = \left\{
\begin{array}
[c]{ll}%
\left[  \left(  a_{1}\wedge a_{2}\right)  \ominus r\right]  , & r\geq0\\
\top, & r<0
\end{array}
\right.  = \underline{a_{1}\wedge a_{2}}\left(  r,\infty\right)  .
\end{gather*}

The first equality is justified by \cite[3.1.3]{BallHager:1991}, and the
fourth by \cite[3.3.1(7)]{Ball:2013}.

Again fix $a_{i}\in A^{+}$, and consider
\[
\left(  \underline{a_{1}}+\underline{a_{2}}\right)  \left(  r,\infty\right)
=\bigvee_{U_{1}+U_{2}\subseteq\left(  r,\infty\right)  }\left(
\underline{a_{1}}\left(  U_{1}\right)  \wedge\underline{a_{2}}\left(
U_{2}\right)  \right)  =\bigvee_{s}\left(  \underline{a_{1}}\left(
s,\infty\right)  \wedge\underline{a_{2}}\left(  r-s,\infty\right)  \right)
\]
The second equality is justified by the observation that if open subsets
$U_{i}\subseteq\mathbb{R}$ satisfy $U_{1}+U_{2}\subseteq\left(  r,\infty
\right)  $ then $U_{1}$ must be bounded below, say by $s$, in which case
$U_{2}$ must be bounded below by $r-s$. If $s<0$ then the corresponding term
of the join satisfies%
\[
\underline{a_{1}}\left(  s,\infty\right)  \wedge\underline{a_{2}}\left(
r-s,\infty\right)  =\underline{a_{2}}\left(  r-s,\infty\right)  \leq
\underline{a_{2}}\left(  r,\infty\right)  \leq\underline{a_{1}+a_{2}}\left(
r,\infty\right)  ,
\]
and if $r-s<0$ then the term satisfies
\[
\underline{a_{1}}\left(  s,\infty\right)  \wedge\underline{a_{2}}\left(
r-s,\infty\right)  =\underline{a_{1}}\left(  s,\infty\right)  \leq
\underline{a_{1}}\left(  r,\infty\right)  \leq\underline{a_{1}+a_{2}}\left(
r,\infty\right)  .
\]
In the only remaining case we have $0\leq s\leq r$, which gives%
\begin{gather*}
\underline{a_{1}}\left(  s,\infty\right)  \wedge\underline{a_{2}}\left(
r-s,\infty\right)  =\left[  a_{1}\ominus s\right]  \wedge\left[  a_{2}%
\ominus\left(  r-s\right)  \right]  =\\
\left[  a_{1}\ominus s\wedge a_{2}\ominus\left(  r-s\right)  \right]
\leq\left[  \left(  a_{1}+a_{2}\right)  \ominus r\right]  =\underline{a_{1}%
+a_{2}}\left(  r,\infty\right)  .
\end{gather*}
The inequality holds by \cite[3.3.7]{Ball:2013}. Thus have we established that
$\left(  \underline{a_{1}}+\underline{a_{2}}\right)  \left(  r,\infty\right)
\leq\underline{a_{1}+a_{2}}\left(  r,\infty\right)  $.

To establish the opposite inequality it is enough to show that, for any
$\varepsilon>0$ and $r\geq0$,%
\[
\left(  \underline{a_{1}}+\underline{a_{2}}\right)  \left(  r,\infty\right)
\geq\underline{a_{1}+a_{2}}\left(  r+2\varepsilon,\infty\right)  .
\]
To that end fix $\varepsilon$ and $r$, and put $x\equiv\underline{a_{1}+a_{2}%
}\left(  r+2\varepsilon,\infty\right)  $. Then we have
\begin{gather*}
x   =
x\wedge\top = 
x\wedge\bigvee_{r_{1}}\underline{a_{1}}\left(r_{1}-\varepsilon,r_{1}+\varepsilon\right)  = 
\bigvee_{r_{1}}\left(x\wedge\underline{a_{1}}\left(  r_{1}-\varepsilon,r_{1}+\varepsilon\right)\right) = \\
\bigvee_{r_{1}}\left(  \left(  x\wedge\underline{a_{1}}\left(
-\infty,r_{1}+\varepsilon\right)  \right)  \wedge\underline{a_{1}}\left(
r_{1}-\varepsilon,\infty\right)  \right)   \leq \\ 
\bigvee_{r_{1}}\left(  \underline{a_{2}}\left(  r+\varepsilon
-r_{1},\infty\right)  \wedge\underline{a_{1}}\left(  r_{1}-\varepsilon
,\infty\right)  \right) = 
\left(  \underline{a_{1}}+\underline{a_{2}}\right)  \left(  r,\infty\right)  .
\end{gather*}
The inequality is justified by the observation that
\begin{gather*}
x\wedge\underline{a_{1}}\left(  -\infty,r_{1}+\varepsilon\right)
=x\wedge\underline{-a_{1}}\left(  -r_{1}-\varepsilon,\infty\right)  =\\
\underline{a_{1}+a_{2}}\left(  r+2\varepsilon,\infty\right)  \wedge
\underline{a_{1}}\left(  -\infty,r_{1}+\varepsilon\right)  =\\
\bigvee_{r_{2}}\left(  \underline{a_{1}}\left(  r_{2}+\varepsilon
,\infty\right)  \wedge\underline{a_{2}}\left(  r+\varepsilon-r_{2}%
,\infty\right)  \right)  \wedge\underline{a_{1}}\left(  -\infty,r_{1}%
+\varepsilon\right)  =\\
\bigvee_{r_{2}}\left(  \underline{a_{1}}\left(  r_{2}+\varepsilon
,\infty\right)  \wedge\underline{a_{2}}\left(  r+\varepsilon-r_{2}%
,\infty\right)  \wedge\underline{a_{1}}\left(  -\infty,r_{1}+\varepsilon
\right)  \right)  =\\
\bigvee_{r_{2}}\left(  \underline{a_{1}}\left(  r_{2}+\varepsilon
,r_{1}+\varepsilon\right)  \wedge\underline{a_{2}}\left(  r+\varepsilon
-r_{2},\infty\right)  \right)  .
\end{gather*}
For if $r_{2}\geq r_{1}$ then the contribution of the corresponding term to
the last join is trivial, and if $r_{2}<r_{1}$ then the corresponding term
satisfies
\[
\underline{a_{1}}\left(  r_{2}+\varepsilon,r_{1}+\varepsilon\right)
\wedge\underline{a_{2}}\left(  r+\varepsilon-r_{2},\infty\right)
\leq\underline{a_{2}}\left(  r+\varepsilon-r_{2},\infty\right)  \leq
\underline{a_{2}}\left(  r+\varepsilon-r_{1},\infty\right)  ,
\]
with the result that
\begin{align*}
x\wedge\underline{a_{1}}\left(  -\infty,r_{1}+\varepsilon\right)   &
=\bigvee_{r_{2}}\left(  \underline{a_{1}}\left(  r_{2}+\varepsilon
,r_{1}+\varepsilon\right)  \wedge\underline{a_{2}}\left(  r+\varepsilon
-r_{2},\infty\right)  \right) \\
&  \leq\underline{a_{2}}\left(  r+\varepsilon-r_{1},\infty\right)  .
\end{align*}
It is obvious that $a\longmapsto\underline{a}$ is one-one, since
\[
0<a\in A\Longrightarrow\underline{a}\left(  0,\infty\right)  \equiv\left[
a\right]  \neq\bot=\underline{0}\left(  0,\infty\right)  .
\]
Finally, the fact that $\overset{\_}{\rightarrow}$ preserves truncation is the
content of Lemma \ref{Lem:11}.
\end{proof}

\subsection{The representation $A\rightarrow\mathcal{RK}A$ is not functorial}

To say that the representation $\kappa_{A}$ is functorial is to say that
$\left(  \kappa_{A},\mathcal{K}A\right)  $ constitutes an $\mathcal{R}%
$-universal arrow with domain $A$. \begin{figure}[tbh]
\setlength{\unitlength}{4pt}
\par
\begin{center}
\begin{picture}(48,13)(0,-1)
\small
\put(0,12){\makebox(0,0){$A$}}
\put(0,0){\makebox(0,0){$B$}}
\put(12,0){\makebox(0,0){$\mathcal{RK} B$}}
\put(12,12){\makebox(0,0){$\mathcal{RK}A$}}
\put(24,12){\makebox(0,0){$\mathcal{K}A$}}
\put(24,0){\makebox(0,0){$\mathcal{K}B$}}
\put(36,6){\makebox(0,0){$\mathcal{O}\mathbb{R}$}}
\put(2,12){\vector(1,0){6}}
\put(2,0){\vector(1,0){6}}
\put(0,10){\vector(0,-1){8}}
\put(12,10){\vector(0,-1){8}}
\put(24,10){\vector(0,-1){8}}
\put(34,8){\vector(-2,1){6}}
\put(34,4){\vector(-2,-1){7}}
\put(15,6){\makebox(0,0){$\mathcal{R}g$}}
\put(22.5,6){\makebox(0,0){$g$}}
\put(4.5,13.5){\makebox(0,0){$\kappa_A$}}
\put(5,-1.5){\makebox(0,0){$\kappa_B$}}
\put(-2,6){\makebox(0,0){$\theta$}}
\put(32.5,10.5){\makebox(0,0){$\widehat{a}$}}
\put(33,1.5){\makebox(0,0){$\theta (a)$}}
\end{picture}
\end{center}
\end{figure}

\noindent That means that for any other trunc morphism $\theta$, with a
codomain of the form $\mathcal{R}L$ for some frame $L$, there is a frame
morphism $g$ such that $\mathcal{R}g\circ\mu=\theta$. But the arrow $\left(
\kappa_{A},\mathcal{K}A\right)  $ is no such thing, as we can see from the
following simple example

\begin{example}
\label{Ex:1}Let $A\equiv\mathbb{R}$, so that $\mathcal{K}A$ is the two-element
frame $2\equiv\left\{  \bot,\top\right\}  $. A little reflection on the
definitions leads to the conclusion that, for each $a\in\mathbb{R}$,
$\underline{a}=\kappa_{A}\left(  a\right)  $ is the constant $a$ function,
i.e.,
\[
\underline{a}\left(  U\right)  =\left\{
\begin{array}
[c]{ccc}%
\top & , & a\in U\\
\bot & , & a\notin U
\end{array}
\right.  ,\;U\in\mathcal{O}\mathbb{R}.
\]

Now let $B\equiv\mathbb{R}^{2}=\mathbb{R}\times\mathbb{R}$ with cardinal
order, so that $\mathcal{K}B$ is the four element frame $4\equiv\left\{
\left(  \bot,\bot\right)  <\left(  \bot,\top\right)  ,\left(  \top
,\bot\right)  <\left(  \top,\top\right)  \right\}  $. (Here $\left(  \bot
,\top\right)  $ represents $0\times\mathbb{R}$, $\left(  \top,\bot\right)  $
represents $\mathbb{R}\times0$, etc. ) A little more reflection reveals that
\[
\underline{\left(  a,b\right)  }\left(  U\right)  =\left\{
\begin{array}
[c]{lll}%
\left(  \top,\top\right)  & , & a,b\in U\\
\left(  \bot,\top\right)  & , & a\notin U\ni b\\
\left(  \top,\bot\right)  & , & b\notin U\ni a\\
\left(  \bot,\bot\right)  & , & a,b\notin U
\end{array}
\right.  ,\;U\in\mathcal{O}\mathbb{R}.
\]

Finally, consider the embedding $\theta:A\rightarrow B\equiv\left(
a\longmapsto\left(  a,0\right)  \right)  $. Functoriality would require the
existence of a frame map $g:\mathcal{K}A\rightarrow\mathcal{K}B$ such that
$\mathcal{R}g\circ\kappa_{A}=\kappa_{B}\circ\theta$, i.e., such that%
\[
g\circ\underline{a}\left(  U\right)  =\underline{\theta\left(  a\right)
}\left(  U\right)  ,\;U\in\mathcal{O}\mathbb{R}.
\]
However, there is exactly one frame map $2\rightarrow4$, and it is easy to
check that it does not have the property displayed.

Note that $\theta$ is an example of a trunc morphism between 
$\mathbf{W}$-objects which is not a $\mathbf{W}$-morphism. 
Thus $\mathbf{W}$ is not a full subcategory of $\mathbf{AT}$. 
\end{example}

It is perhaps surprising that functoriality can be restored to the
representation by the simple expedient of adjoining a point to each frame
under consideration, and requiring the frame maps to commute with the
designated points. Nevertheless this is the case, but to prove it requires a
little spadework.

\section{Pointed and filtered frames\label{Sec:1}}

We digress to fill in the background necessary to systematically adjoin a
single point to each frame under consideration. This procedure is not only
necessary for our development, but it is also of interest in its own right. We
remind the reader of our convention that all frames are completely regular
unless explicitly stipulated otherwise. We denote the two-element frame by
$2\equiv\left\{  \bot,\top\right\}  $.

\subsection{Pointed frames}

A \emph{pointed frame} is a pair $\left(  L,\ast_{L}\right)  $, where $L$ is a
frame and $\ast:L\rightarrow2$ is a point of $L$. A \emph{pointed frame
morphism} $f:\left(  L,\ast_{L}\right)  \rightarrow\left(  M,\ast_{M}\right)
$ is a frame morphism $f:L\rightarrow M$ which commutes with the points, i.e.,
such that $\ast_{M}\circ f=\ast_{L}$. We use $\mathbf{pF}$ to denote the
category of pointed frames and their morphisms. Of particular importance is
the\emph{\ frame of the pointed reals} $\mathcal{O}_{0}\mathbb{R}\equiv\left(
\mathcal{O}\mathbb{R},0\right)  $, where $0:\mathcal{O}\mathbb{R}\rightarrow2$
is the constant $0$ frame map.

Perhaps the most natural example of a pointed frame is one of the form
$2L\equiv\left(  2\times L,\ast\right)  $, where $L$ is a frame, $2\times L$
is the ordinary frame product, and $\ast$ is the first projection map $\left(
\varepsilon,a\right)  \longmapsto\varepsilon$, $\varepsilon=0,1$. In fact, we
assume the point map to be this projection whenever we deal with a subobject
of $2L$.

It is worth mentioning that the second projection
\[
\pi:2L\rightarrow L\equiv\left(  \left(  \varepsilon,a\right)  \longmapsto
a,\;a\in L\right)
\]
is the co-free pointed frame over $L$. More explicitly, let $\mathcal{U}$ be
the forgetful functor which assigns to each pointed frame its underlying plain
frame. Then, given a pointed frame $\left(  M,\ast_{M}\right)  $ and a frame
map $f:\mathcal{U}M\rightarrow L$, there exists a unique pointed frame
morphism $k:\left(  M,\ast_{M}\right)  \rightarrow2L$ such that $\pi
\circ\mathcal{U}k=f$. In fact, $\mathcal{U}k$ is just the product map
$\ast_{M}\times f$.

We should also point out that the designated point $\ast$ of $2L$ is isolated.
A point $p$ of a frame $M$ is said to be \emph{isolated} if it is the open
quotient of a complemented element. An \emph{isolated point frame} is a
pointed frame whose designated point is isolated. Isolated point frames are
central to the representation of $\mathbf{W}$-objects as truncs; see Section
\ref{Sec:3}. We use $\mathbf{ipF}$ to denote the full subcategory of
$\mathbf{pF}$ composed of the isolated pointed frames.

\subsection{The standard representation of a pointed frame}

In any frame $M$, we define the \emph{kernel} of a point $\ast:M\rightarrow2$
to be
\[
p\equiv\bigvee_{\ast\left(  a\right)  =\bot}a,
\]
the largest element of $M$ sent to $\bot$ by $\ast$. If $M$ is regular then
$p$ is maximal and $\ast$ is just the closed quotient determined by $p$. The
associated congruence is complemented in the congruence frame of $M$ by the
open quotient determined by $p$, which we denote $\pi$. One concrete
realization of the two quotients is
\[
\ast:M \to {\uparrow} p = \left(  a\mapsto a\vee p\right)
\text{ and } \pi : M \to {\downarrow} a = \left(  a\mapsto a\wedge p\right)  .
\]

If $\left(  M,\ast_M\right)  $ is a pointed frame then we refer to the kernel of
$\ast_M$ as the \emph{kernel of }$M$, and write it $p_{M}$. We denote the
associated closed and open quotient maps by $\ast_{M}:M \to 2$ and
$\pi_{M}:M\to Q_{M}$, respectively, and refer to them as canonical.
The induced product map $\ast_{M}\times\pi_{M}$, 
which we denote by $\nu_{M}:M\rightarrow2Q_{M}$, is a
$\mathbf{pF}$-morphism by construction, and, since the associated congruences
meet to the identity in the congruence lattice, $v_{M}$ is injective. We drop
the subscripts whenever doing so introduces no ambiguity.

\begin{figure}[htb]
\setlength{\unitlength}{4pt}
\par
\begin{center}
\begin{picture}(12,12)(0,0)
\small
\put(0,12){\makebox(0,0){$2Q$}}
\put(12,12){\makebox(0,0){$2$}}
\put(0,0){\makebox(0,0){$Q$}}
\put(12,0){\makebox(0,0){$M$}}

\put(2,12){\vector(1,0){8}}
\put(0,10){\vector(0,-1){8}}
\put(12,2){\vector(0,1){8}}
\put(10,0){\vector(-1,0){8}}
\put(10,2){\vector(-1,1){8}}

\put(6,13){\makebox(0,0){$*$}}
\put(-2,6){\makebox(0,0){$\pi$}}
\put(14.5,6){\makebox(0,0){$\ast_M$}}
\put(6.5,-1.5){\makebox(0,0){$\pi_M$}}
\put(7.5,7){\makebox(0,0){$\nu_M$}}
\end{picture}
\end{center}
\end{figure}

Let $L$ be a frame and $F$ a filter on $L$. We say that $F$ is \emph{regular}
if $\bigvee_{F}b^{\ast}=\top$. Define
\[
2_{F}L\equiv\left\{  \left(  \varepsilon,a\right)  \in2L:\varepsilon
=\top\Longrightarrow a\in F\right\}  .
\]

\begin{lemma}
For any frame $L$ and filter $F$ on $L$, $2_{F}L$ is a sub-pointed frame of
$2L$ which is regular iff $F$ is regular.
\end{lemma}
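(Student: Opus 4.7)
The lemma has two parts: that $2_F L$ is a sub-pointed frame of $2L$, and that its regularity is equivalent to that of $F$. For the first part, it suffices to check closure under the frame operations of $2L$ together with membership of $\top$. Since $\top\in F$ by the filter axioms, $(\top,\top)\in 2_F L$. In a binary meet the first coordinate is $\top$ only if both factors have first coordinate $\top$; their second coordinates then lie in $F$, so by filter closure under binary meets the meet is in $F$. In an arbitrary join, if the joined first coordinate is $\top$ then some joinand has first coordinate $\top$, its second coordinate lies in $F$, and the join of the second coordinates lies above it, hence in $F$ by upward closure. The first projection of $2L$ restricts to the designated point of $2_F L$, so the inclusion is a morphism of pointed frames.

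The crucial calculation for the biconditional is the pseudocomplement in $2_F L$. I compute that $(\top,c)^{*_{2_F L}}=(\bot,c^*)$ unconditionally, while $(\bot,c)^{*_{2_F L}}=(\top,c^*)$ if $c^*\in F$ and $(\bot,c^*)$ otherwise; the latter disjunction arises because the candidate maximum $(\top,c^*)$ lies in $2_F L$ only when $c^*\in F$. As a consequence, $(\bot,c)\prec(\bot,\top)$ in $2_F L$ iff $c^*\in F$.

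For the $(\Rightarrow)$ direction, assuming $2_F L$ regular, applying regularity to $(\bot,\top)\in 2_F L$ gives $(\bot,\top)=\bigvee\{(\bot,c):c^*\in F\}$, hence $\top=\bigvee\{c\in L:c^*\in F\}$ in $L$. Since $b^{**}\in F$ whenever $b\in F$ (by upward closure of $F$ and $b\le b^{**}$), this set coincides upward with $\{b^*:b\in F\}$, so the join equals $\bigvee_F b^*$, and $F$ is regular.

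For the $(\Leftarrow)$ direction, assuming $F$ regular, I exhibit each $(\varepsilon,a)\in 2_F L$ as a join of rather-below elements in $2_F L$. For $(\bot,a)$ with $a\in L$, the equation $a=\bigvee_F(a\wedge e^*)$, combined with the complete regularity of $L$ (for interpolation refining each $a\wedge e^*$ into an element still rather below $a$ with pseudocomplement in $F$), produces a family of $(\bot,b)$'s, each rather below $(\bot,a)$ in $2_F L$, whose join is $(\bot,a)$. The case $(\top,a)$ for $a\in F$ is the main obstacle: the join's first coordinate can only reach $\top$ if at least one $(\top,b)\in 2_F L$ with $b\prec a$ in $L$ appears, i.e., some $b\in F$ rather below $a$ in $L$. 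I expect to produce such a $b$ by combining $F$-regularity, the interpolation afforded by complete regularity of $L$, and the closure of $F$ under finite meets. Adjoining $(\top,b)$ to the $(\bot,c)$-terms coming from the previous case yields $(\top,a)$ as a join of rather-below elements in $2_F L$, completing the proof.
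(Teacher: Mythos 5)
Your decomposition is the right one, and most of it checks out: the subframe/point verification, the pseudocomplement computations in $2_F L$, the forward direction (which carries the same content as the paper's argument, repackaged through the observation that $(\bot,c)\prec(\bot,\top)$ iff $c^{*}\in F$), and the treatment of $(\bot,a)$ via the family $(\bot,a'\wedge e^{*})$ with $a'\prec a$ and $e\in F$, each witnessed by $(\top,a'^{*}\vee e)\in 2_F L$. But the step you defer --- producing, for each $a\in F$, some $b\in F$ with $b\prec a$ in $L$ --- is a genuine gap, not a routine verification. You have correctly identified that it is forced: joins in $2_F L$ are computed as in $2L$, so a join of elements rather below $(\top,a)$ can have first coordinate $\top$ only if some joinand is of the form $(\top,b)$ with $b\in F$, and $(\top,b)\prec(\top,a)$ in $2_F L$ is equivalent to $b\prec a$ in $L$. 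Thus regularity of $2_F L$ demands that $F$ be \emph{round}: every member of $F$ must have a member of $F$ rather below it.

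The trouble is that roundness does not follow from $\bigvee_{F}b^{*}=\top$ together with complete regularity of $L$ and closure of $F$ under finite meets, so the tools you list cannot produce the required $b$. Concretely, on $L=\mathcal{O}\mathbb{R}$ let $F$ be the upward closure of the sets $G_{n}=\bigcup_{m\geq n}\left(  m,m+1\right)$. Then $G_{n}^{*}=\left(  -\infty,n\right)$, so $\bigvee_{F}b^{*}=\top$ and $F$ is regular in the sense of the lemma; yet every $V\in F$ contains some $G_{n}$ and hence has closure containing $\left[  n,\infty\right)$, so no $V\in F$ satisfies $\overline{V}\subseteq G_{1}$, i.e., no member of $F$ is rather below $G_{1}\in F$. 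The step you hoped to supply is therefore false at this level of generality, and with it the ``if'' direction as stated. (The paper's own proof has the same lacuna: it asserts $\left(  \top,b\right)  =\bigvee_{a\prec b}\left(  \top,a\right)$ with each $\left(  \top,a\right)  \prec\left(  \top,b\right)$, silently assuming that the elements $\left(  \top,a\right)$, $a\prec b$, lie in $2_F L$ --- which is again exactly roundness of $F$. In the one place the lemma is actually used, namely for the filter generated by the kernels $a\blacktriangleleft1$, roundness does hold, since $\overline{2a}\blacktriangleleft1=a\blacktriangleleft\frac{1}{2}\prec a\blacktriangleleft1$ by Lemma \ref{Lem:22}(5); but both your argument and the paper's need roundness as an explicit hypothesis or verification.)
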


\begin{proof}
Suppose $2_{F}L$ is regular, and, in order to verify that $F$ is regular,
consider $a<\top$ in $L$. Then $\left(  \bot,a\right)  <p\equiv\left(
\bot,\top\right)  $ in $2_{F}L$, so that by regularity there must be some
$q\prec p$ such that $q\nleq\left(  \bot,a\right)  $, say $q\wedge r=\bot$ and
$r\vee p=\top$ for some $r\in2_{F}L$. Now $q$ must be of the form $\left(
\bot,b\right)  $ since $q\leq p$, hence $r$ must be of the form $\left(
\top,c\right)  $ since $q\vee r=\top$. But then the definition of $2_{F}L$
forces $c\in F$, and $q\wedge r=\bot$ forces $b\wedge c=\bot$, with the result
that $c^{\ast}\nleq a$. That is, $F$ is regular.

Now suppose that $F$ is regular. We claim that, for any $c\in L$,
\[
\left(  \bot,c\right)  =\bigvee\left\{  \left(  \bot,a\wedge b^{\ast}\right)
:b\in F,~a\prec c\right\}  .
\]
This is true because%
\begin{gather*}
\bigvee_{b\in F,~a\prec c}\left(  \bot,a\wedge b^{\ast}\right)  = \left(
\bot,\bigvee_{b\in F,a\prec c}\left(  a\wedge b^{\ast}\right)  \right)  =
\left(  \bot,\bigvee_{a\prec c}\bigvee_{b\in F}\left(  a\wedge b^{\ast
}\right)  \right)  =\\
\left(  \bot,\bigvee_{a\prec c}\left(  a\wedge\bigvee_{b\in F}b^{\ast}\right)
\right)  =\left(  \bot,\bigvee_{a\prec c}a\right)  =\left(  \bot,c\right)  .
\end{gather*}
And $\left(  \top,a^{\ast}\vee b\right)  $ witnesses $\left(  \bot,a\wedge
b^{\ast}\right)  \prec\left(  \bot,c\right)  $, for
\[
\left(  \top,a^{\ast}\vee b\right)  \wedge\left(  \bot,a\wedge b^{\ast
}\right)  =\left(  \bot,\bot\right)  \;\text{and\ }\left(  \top,a^{\ast}\vee
b\right)  \vee\left(  \bot,c\right)  =\left(  \top,\top\right)  .
\]
On the other hand, it is obvious that $\left(  \top,b\right)  =\left(
\top,\bigvee_{a\prec b}a\right)  =\bigvee_{a\prec b}\left(  \top,a\right)  $
for any $b\in F$, and if $a\prec b$ it is just as clear that $\left(
\top,a\right)  \prec\left(  \top,b\right)  $. This shows that $2_{F}L$ is regular.
\end{proof}

\begin{proposition}
\label{Prop:2}Let $M$ be a pointed frame with canonical quotient maps $\ast$
and $\pi:M\rightarrow Q$, and let
\[
F\equiv\left\{  \pi\left(  a\right)  :\ast\left(  a\right)  =\top\right\}  .
\]
Then $F$ is a regular filter on $Q$, and the range $\nu_{M}\left[  M\right]  $
of $\nu_{M}$ is $2_{F}Q$\begin{figure}[tbh]
\setlength{\unitlength}{4pt}
\par
\begin{center}
\begin{picture}(18,18)(0,.5)
\small
\put(0,18){\makebox(0,0){$2L$}}
\put(9,9){\makebox(0,0){$2_F L$}}
\put(18,18){\makebox(0,0){$2$}}
\put(0,0){\makebox(0,0){$L$}}
\put(18,0){\makebox(0,0){$M$}}
\put(2.5,18){\vector(1,0){13.5}}
\put(0,16){\vector(0,-1){14}}
\put(7,11){\vector(-1,1){5}}
\put(16,2){\vector(-1,1){5}}
\put(18,2){\vector(0,1){14}}
\put(16,0){\vector(-1,0){14}}
\put(9,19.5){\makebox(0,0){$\ast$}}
\put(-2.5,9){\makebox(0,0){$\pi$}}
\put(9,-2){\makebox(0,0){$\rho$}}
\put(21,9){\makebox(0,0){$\ast_M$}}
\end{picture}
\end{center}
\end{figure}
\end{proposition}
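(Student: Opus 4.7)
The plan is to verify three assertions in turn: that $F$ is a filter on $Q$, that it is regular, and that the image $\nu_{M}[M]$ equals $2_{F}Q$ as a subset of $2\times Q$. Throughout, I will work with the concrete realizations $\ast(a)=a\vee p$ (interpreted in ${\uparrow}p=2$) and $\pi(a)=a\wedge p$, so that $F=\{a\wedge p: a\vee p=\top\}$.

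The filter axioms are a short calculation. Membership of $\top_{Q}=p$ comes from $a=\top$. Closure under binary meets is the distributive identity $(a_{1}\wedge a_{2})\vee p=(a_{1}\vee p)\wedge(a_{2}\vee p)=\top$. For upward closure, given $\pi(a)\leq b$ in $Q$ with $\ast(a)=\top$, replace $a$ by $a\vee b$: then $\pi(a\vee b)=(a\wedge p)\vee(b\wedge p)=\pi(a)\vee b=b$ and $\ast(a\vee b)=\top$, so $b\in F$.

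The real content is regularity: $\bigvee_{b\in F}b^{\ast_{Q}}=p$. The key observation is that any $d\prec p$ in $M$ automatically satisfies $d\leq p$; for if $d\wedge e=\bot$ and $e\vee p=\top$, then $e^{\ast}=e^{\ast}\wedge(e\vee p)=e^{\ast}\wedge p\leq p$, and $d\leq e^{\ast}\leq p$. Invoking regularity of $M$ (our standing convention), write $p=\bigvee\{d:d\prec p\}$. For each such $d$, its witness $e$ satisfies $\ast(e)=e\vee p=\top$, so $b_{0}:=\pi(e)\in F$, and $d\wedge b_{0}\leq d\wedge e=\bot$, which together with $d\in Q$ gives $d\leq b_{0}^{\ast_{Q}}$. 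Taking joins, $p\leq\bigvee_{b\in F}b^{\ast_{Q}}\leq p$.

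Finally, the image calculation follows by splitting on the first coordinate of $\nu_{M}(a)=(\ast(a),\pi(a))$: if $\ast(a)=\bot$, then $a\leq p$ lies in $Q$ and $\pi(a)=a$, producing all pairs $(\bot,b)$ with $b\in Q$; if $\ast(a)=\top$, then $\pi(a)$ traverses precisely $F$ by the very definition of $F$. The union of these two families is exactly $2_{F}Q$. The main obstacle is the regularity step, where one must see past the abstract definition of $F$ to produce, for each rather-below witness $d\prec p$ in $M$, a concrete pseudocomplement witness in $F$ dominating $d$; the little calculation $e^{\ast}\leq p$ is what makes the transfer between $M$ and its quotient $Q$ work.
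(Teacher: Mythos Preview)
The paper states this proposition without an explicit proof (though it later refers to ``Proposition~\ref{Prop:2} and its proof'' when establishing the categorical equivalence), evidently regarding the verification as routine. Your argument correctly supplies the omitted details: the filter axioms are straightforward, the image computation is immediate from the definitions, and your treatment of regularity---showing that each $d\prec p$ in $M$ already lies below $p$ via the calculation $e^{\ast}=e^{\ast}\wedge(e\vee p)=e^{\ast}\wedge p\leq p$, and then transferring the rather-below witnesses into pseudocomplement witnesses in $Q$---is exactly the substantive step one must check.
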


We denote the range restriction of $\nu_{M}$ by $\tau_{M}:M\rightarrow2_{F}Q$,
and the insertion $2_{F}Q\rightarrow2Q$ by $\sigma_{M}$, so that $\nu
_{M}=\sigma_{M}\tau_{M}$. We refer to $\tau_{M}$ as the \emph{standard
representation of }$M$.

\subsection{The trunc $\mathcal{R}_{0}M$}

For a pointed frame $\left(  M,\ast\right)  $, we denote the family of pointed
frame morphisms $\mathcal{O}_{0}\mathbb{R}\rightarrow M$ by
\[
\mathcal{R}_{0}M\equiv\hom_{\mathbf{pF}}\left(  \mathcal{O}_{0}\mathbb{R}%
,M\right)  =\left\{  a\in\mathcal{R}M:\ast_{M}\circ a=0\right\}  .
\]

\begin{lemma}
\label{Lem:17}$\mathcal{R}_{0}M$ is a subtrunc of $\mathcal{R}M$. However, it
is a not a $\mathbf{W}$-subobject of $\mathcal{R}M$ if $M$ is nontrivial,
i.e., if $\bot<\top$. In fact, the only constant function of $\mathcal{R}M$
present in $\mathcal{R}_{0}M$ is the $0$ function.
\end{lemma}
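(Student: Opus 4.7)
The plan is to exhibit $\mathcal{R}_0 M$ as the $\ell$-group kernel of a natural trunc morphism $\mathcal{R}M\to\mathbb{R}$, thereby reducing the subtrunc claim to the general fact that kernels of trunc morphisms are truncation kernels (Lemma \ref{Lem:5}), and then to read off the remaining assertions by computing what this morphism does to constant functions.

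First, identify $\mathcal{R}(2)$ with $\mathbb{R}$: a frame map $\mathcal{O}\mathbb{R}\to 2$ is precisely a point of $\mathcal{O}\mathbb{R}$, and under the standard correspondence between such points and real numbers the trunc operations on $\mathcal{R}(2)$ match the usual ones on $\mathbb{R}$ (with $\bar r = r\wedge 1$). Under this identification, the designated point $0:\mathcal{O}\mathbb{R}\to 2$ of $\mathcal{O}_0\mathbb{R}$ corresponds to the real number $0$. Now postcomposition with $\ast_M:M\to 2$ yields a map $\psi:\mathcal{R}M\to\mathcal{R}(2)\cong\mathbb{R}$ sending $a\mapsto \ast_M\circ a$. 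Because the sum, meet, join, and truncation $\bar a = a\wedge 1$ on $\mathcal{R}L$ are built from frame joins, meets, and the constant $1$-point (see \cite{BallHager:1991}), and all three are preserved by every frame map, $\psi$ is a trunc morphism. By the defining condition of $\mathcal{R}_0 M$, its elements are exactly those $a\in\mathcal{R}M$ with $\psi(a)=0$, so $\mathcal{R}_0 M$ is the kernel of $\psi$, hence a truncation kernel, and in particular a subtrunc, of $\mathcal{R}M$ by Lemma \ref{Lem:5}.

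Finally, compute $\psi$ on constants. The constant-$r$ function in $\mathcal{R}M$ is the frame map $U\mapsto\top$ if $r\in U$ and $\bot$ otherwise; since $\ast_M$ preserves $\top$ and $\bot$, its postcomposition with this constant is the frame map $\mathcal{O}\mathbb{R}\to 2$ corresponding to the real number $r$. Consequently the constant $r$ belongs to $\mathcal{R}_0 M$ iff $r=0$. In particular, when $M$ is nontrivial, so that $\bot\neq\top$ in $2$, the unit of $\mathcal{R}M$ (the constant $1$) does not lie in $\mathcal{R}_0 M$; since any $\mathbf{W}$-subobject of $\mathcal{R}M$ must contain its designated unit, $\mathcal{R}_0 M$ is not a $\mathbf{W}$-subobject. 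The only substantive verification is that postcomposition with $\ast_M$ preserves the trunc structure on $\mathcal{R}L$; this is a routine unpacking of the frame-theoretic definitions of those operations and is not expected to present serious difficulty.
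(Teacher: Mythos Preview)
Your argument is correct and takes a cleaner, more conceptual route than the paper's. The paper verifies directly that $\mathcal{R}_0M$ is closed under addition, the lattice operations, and truncation by expanding each operation on $\mathcal{R}M$ in terms of frame joins and meets and then applying $\ast_M$ termwise. You instead package all of this into the single observation that $\mathcal{R}$ is functorial on frame maps, so that $\psi=\mathcal{R}(\ast_M):\mathcal{R}M\to\mathcal{R}(2)\cong\mathbb{R}$ is a trunc morphism and $\mathcal{R}_0M=\ker\psi$. The paper's computation is essentially a special case of this functoriality, unwound by hand; your approach also delivers the statements about constants and the failure to be a $\mathbf{W}$-subobject as immediate corollaries of $\psi(\text{const }r)=r$, whereas the paper's proof leaves those parts implicit.

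Two small remarks on presentation. First, the appeal to Lemma~\ref{Lem:5} is a bit indirect: what you actually need is that the kernel of an $\ell$-homomorphism is a convex $\ell$-subgroup, and that convexity together with $0\le\overline{a}\le a$ gives closure under truncation; the absorbing and archimedean properties are not used here. Second, the phrase ``so that $\bot\neq\top$ in $2$'' is a little off---$2$ is always nontrivial; the point is rather that nontriviality of $M$ ensures the constant $1$ in $\mathcal{R}M$ differs from the constant $0$, so that $1\notin\ker\psi$. Neither affects the correctness of the argument.
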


\begin{proof}
For $0\leq a_{i}\in\mathcal{R}_{0}M$ we have%
\[
\left(  a_{1}+a_{2}\right)  \left(  r,\infty\right)  =
\hspace{-7pt}\bigvee_{U_{1}+U_{2}\subseteq\left(  r,\infty\right)  }\hspace{-7pt}
\left(  a_{1}\left(  U_{1}\right) \wedge a_{2}\left(  U_{2}\right)  \right) =  \hspace{-4pt}\bigvee_{r_{1}+r_{2}=r}\hspace{-4pt}
\left(a_{1}\left(  r_{1},\infty\right)  \wedge a_{2}\left(  r_{2},\infty\right)
\right)  ,
\]
for if nonempty open subsets $U_{i}\subseteq\mathbb{R}$ satisfy $U_{1}%
+U_{2}\subseteq\left(  r,\infty\right)  $ then each $U_{i}$ must be bounded
below, and if $r_{i}=\bigwedge U_{i}$ then $r_{1}+r_{2}\geq r$. Therefore
\[
\ast\left(  a_{1}+a_{2}\right)  \left(  r,\infty\right)  =
\hspace{-4pt}\bigvee_{r_{1}+r_{2}=r}\hspace{-4pt}
\left(  \ast a_{1}\left(  r_{1},\infty\right)  \wedge\ast a_{2}\left(  r_{2},\infty\right)  \right)  =
\left\{
\begin{array}
[c]{cc}%
\top, & r<0\\
\bot, & r\geq0
\end{array}
\right.  =0.
\]
The verifications that $\mathcal{R}_{\mathbf{pF}}L$ is closed under negation,
meet, and join all go along similar lines. Finally, for $0\leq a\in
\mathcal{R}_{0}M$ and $1\in\mathcal{R}M$ it is clear that $a\wedge
1\in\mathcal{R}_{0}M$, since for all $r\in\mathbb{R}$ we have
\begin{align*}
\ast\circ\left(  a\wedge1\right)  \left(  r,\infty\right)   &  =\ast\left(
a\left(  r,\infty\right)  \wedge1\left(  r,\infty\right)  \right)  =\ast
a\left(  r,\infty\right)  \wedge\ast1\left(  r,\infty\right) \\
&  =0\left(  r,\infty\right)  \wedge1\left(  r,\infty\right)  =
0\left(r,\infty\right) \qedhere
\end{align*}
\end{proof}

Thanks to Lemma \ref{Lem:17}, we can, and do, regard $\mathcal{R}_{0}M$ as a
trunc. In fact, our main result is that objects of this form are universal for
$\mathbf{AT}$; see Theorems \ref{Thm:4} and \ref{Thm:3}. We emphasize that
these are virtually never $\mathbf{W}$-objects.

On the other hand, the universal $\mathbf{W}$-objects are those of the form
$\mathcal{R}L$, $L$ a frame. It is therefore of interest to learn that each
such $\mathcal{R}L$ is $\mathbf{AT}$-isomorphic to $\mathcal{R}_{0}2L$.

\begin{proposition}
Let $L$ be a frame, and let $2L\equiv2\times L$ be the frame product with
projection $\pi:2L\rightarrow L$. For any $a\in\mathcal{R}L$, let
$\widehat{a}$ be the induced product map $0\times a$. \begin{figure}[tbh]
\setlength{\unitlength}{4pt}
\par
\begin{center}
\begin{picture}(12,12)
\small
\put(0,12){\makebox(0,0){$2L$}}
\put(12,0){\makebox(0,0){$\mathcal{O}\mathbb{R}$}}
\put(12,12){\makebox(0,0){$2$}}
\put(0,0){\makebox(0,0){$L$}}
\put(3,12){\vector(1,0){7}}
\put(9,0){\vector(-1,0){7}}
\put(12,2){\vector(0,1){8}}
\put(0,10){\vector(0,-1){8}}
\put(10,2){\vector(-1,1){8}}
\put(6.25,13.5){\makebox(0,0){$\ast$}}
\put(5.5,-2){\makebox(0,0){$a$}}
\put(14,6){\makebox(0,0){$0$}}
\put(-2,6){\makebox(0,0){$\pi$}}
\put(7.5,7.5){\makebox(0,0){$\widehat{a}$}}
\end{picture}
\end{center}
\end{figure}

\noindent Then $\widehat{a}\in\mathcal{R}_{0}2L$, and the hat map
$a\longmapsto\widehat{a}$ effects an $\mathbf{AT}$-isomorphism $\mathcal{R}%
L\rightarrow\mathcal{R}_{0}2L$.
\end{proposition}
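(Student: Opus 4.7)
The strategy is to realize the hat map as the inverse of the $\mathbf{AT}$-morphism induced by post-composition with the projection $\pi:2L\to L$; the whole argument rests on the universal property of the frame product $2L=2\times L$.

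First I would verify that $\widehat{a}\in\mathcal{R}_{0}2L$. By the definition of the induced product map, $\ast\circ\widehat{a}=\ast\circ(0\times a)=0$ and $\pi\circ\widehat{a}=a$, so $\widehat{a}$ is indeed a pointed frame morphism $\mathcal{O}_{0}\mathbb{R}\to 2L$. Next I would note that post-composition with any frame map $f:M\to N$ yields a $\mathbf{W}$-morphism $\mathcal{R}M\to\mathcal{R}N$, since the $\ell$-group and lattice operations on $\mathcal{R}(-)$ are defined via arbitrary joins and finite meets of the codomain, both preserved by $f$, while $f\circ 1_{M}=1_{N}$ because $f$ preserves $\top$ and $\bot$. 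Applied to $\pi$ this gives a trunc morphism $\mathcal{R}\pi:\mathcal{R}(2L)\to\mathcal{R}L$, $b\mapsto\pi\circ b$, which restricts to a trunc morphism $\mathcal{R}_{0}2L\to\mathcal{R}L$ thanks to Lemma \ref{Lem:17}.

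Then I would check that the hat map and the restriction of $\mathcal{R}\pi$ are mutually inverse. The identity $\mathcal{R}\pi(\widehat{a})=\pi\circ\widehat{a}=a$ is immediate. Conversely, for $b\in\mathcal{R}_{0}2L$ the universal property of the product forces $b=(\ast\circ b)\times(\pi\circ b)=0\times\mathcal{R}\pi(b)=\widehat{\mathcal{R}\pi(b)}$, since $\ast\circ b=0$ by hypothesis. Thus the hat map is the two-sided inverse of a bijective trunc morphism, and hence is itself an $\mathbf{AT}$-isomorphism.

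The one subtlety worth flagging is that $\mathcal{R}_{0}2L$ is not a $\mathbf{W}$-object, even though $\mathcal{R}L$ is, so one must check that the inherited truncation $b\mapsto b\wedge 1_{2L}$ on the former corresponds under the hat map to the unital truncation $a\mapsto a\wedge 1_{L}$ on the latter. This reduces to the identity $\pi\circ 1_{2L}=1_{L}$, which is immediate from $\pi(\top_{2L})=\top_{L}$ and $\pi(\bot_{2L})=\bot_{L}$, so the truncation aspect presents no additional difficulty.
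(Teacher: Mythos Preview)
The paper states this proposition without proof, evidently regarding it as an immediate consequence of the co-free property of $\pi:2L\to L$ spelled out two paragraphs earlier (given a pointed frame $(M,\ast_M)$ and a frame map $f:\mathcal{U}M\to L$, the unique pointed lift is the product map $\ast_M\times f$). Your argument is correct and is precisely the natural way to supply the missing details: you exhibit the restriction of $\mathcal{R}\pi$ as a two-sided inverse to the hat map, using the universal property of the product in one direction and the defining equation $\pi\circ\widehat{a}=a$ in the other. Your final paragraph handling the truncation is the only place any care is needed, and you dispatch it correctly via $\pi\circ 1_{2L}=1_L$; there is nothing to add.
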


\subsection{Filtered frames}

The standard representation $\tau_{M}:M\rightarrow2_{F}Q$ of a pointed frame
$M$ suggests that it may be helpful to think of $M$ in terms of $Q$ and $F$.
We are thus led to consider the category of frames with filters.

A \emph{filtered frame} is an object of the form $\left(  L,F\right)  $, where
$L$ is a frame and $F$ is a regular filter on $L$. A \emph{filtered frame
morphism} $f:\left(  L,F\right)  \rightarrow\left(  M,K\right)  $ is a pair
$\left(  c,f\right)  $, where $c\in M$ and $f:L\rightarrow{}\downarrow\!c$ is
a frame morphism such that
\[
a\in F\Longrightarrow c\rightarrow f\left(  a\right)  \in K,\;a\in L.
\]
Here $c\rightarrow f\left(  a\right)  =\bigvee_{b\wedge f\left(  a\right)
\leq c}b$ is the Heyting arrow operation .

\begin{lemma}
\label{Lem:28}In any frame,%
\[
a\leq b\leq c\Longrightarrow c\rightarrow\left(  b\rightarrow a\right)  \leq
b\rightarrow a.
\]

\end{lemma}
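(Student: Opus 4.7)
The plan is to reduce the claim to a one-line calculation by applying the Heyting adjunction $x \leq y \to z \iff x \wedge y \leq z$ and exploiting the hypothesis $b \leq c$, which gives $b = b \wedge c$. I would not need to use $a$ explicitly except through the defining property of the inner arrow; in fact the inequality $a \leq b$ plays no role, only $b \leq c$ matters.

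More concretely, set $d \equiv c \to (b \to a)$. By the adjunction applied to the outer arrow, $d \wedge c \leq b \to a$. Intersecting both sides with $b$ and using $b \leq c$ (so $b \wedge c = b$), I obtain
\[
d \wedge b \;=\; d \wedge c \wedge b \;\leq\; (b \to a) \wedge b \;\leq\; a,
\]
where the last inequality is the adjunction applied to the inner arrow. Then one more application of the adjunction yields $d \leq b \to a$, which is the desired conclusion.

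There is no real obstacle here; the only thing to keep straight is which direction of the Heyting adjunction is being used at each step, and to notice that $b \leq c$ is exactly what is needed to absorb the outer $\wedge\, c$. The hypothesis $a \leq b$ is not actually used in the argument, so the lemma is perhaps slightly stronger than stated.
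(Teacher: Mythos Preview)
Your proof is correct and is essentially identical to the paper's: both arguments use the adjunction to get $d \wedge c \leq b \to a$, then use $b \leq c$ to obtain $d \wedge b \leq b \wedge (b \to a) \leq a$, and conclude via the adjunction once more. Your observation that the hypothesis $a \leq b$ is unused is also accurate; the paper's proof likewise only invokes $b \leq c$.
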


\begin{proof}
The conclusion follows from the fact that
\[
b\wedge\left(  c\rightarrow\left(  b\rightarrow a\right)  \right)  \leq
c\wedge\left(  c\rightarrow\left(  b\rightarrow a\right)  \right)  \leq
b\rightarrow a,
\]
hence
\[
b\wedge\left(  c\rightarrow\left(  b\rightarrow a\right)  \right)  \leq
b\wedge\left(  b\rightarrow a\right)  \leq a. \qedhere
\]

\end{proof}

\begin{lemma}
Let $\left(  c,f\right)  :\left(  L,F\right)  \rightarrow\left(  M,K\right)  $
and $\left(  d,g\right)  :\left(  M,K\right)  \rightarrow\left(  N,G\right)  $
be filtered frame morphisms. Then $\left(  g\left(  c\right)  ,gf\right)
:\left(  L,F\right)  \rightarrow\left(  N,G\right)  $ is a filtered frame morphism.
\end{lemma}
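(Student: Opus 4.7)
The plan is to verify the two conditions in the definition of a filtered frame morphism for the pair $(g(c), gf)$. For the first condition, I would note that since $f: L \to \downarrow\!c$ and $g: M \to \downarrow\!d$ are frame morphisms and joins and meets in $\downarrow\!c$ (resp.\ $\downarrow\!g(c)$) are inherited from $M$ (resp.\ $N$) with tops $c$ and $g(c)$, the restriction $g|_{\downarrow c}: \downarrow\!c \to \downarrow\!g(c)$ is a frame morphism; composing with $f$ gives $gf: L \to \downarrow\!g(c)$.

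For the second condition, I need to show $g(c) \to gf(a) \in G$ for every $a \in F$, where the arrow is computed in $N$. My strategy is to exhibit an element of $G$ lying below $g(c) \to gf(a)$ in $N$ and then invoke upward-closure of $G$. Starting from $a \in F$, the morphism property of $(c, f)$ yields $c \to f(a) \in K$, and then the morphism property of $(d, g)$ applied to this element gives $d \to g(c \to f(a)) \in G$. I would then bound this element below $g(c) \to gf(a)$ as follows. Since $f(a) \leq c$ in $M$, we have $c \wedge (c \to f(a)) = f(a)$, so $g$ preserving finite meets gives $g(c) \wedge g(c \to f(a)) = gf(a)$, i.e., $g(c \to f(a)) \leq g(c) \to gf(a)$ in $N$. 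Monotonicity of the Heyting arrow in its second slot then yields
\[
d \to g(c \to f(a)) \leq d \to (g(c) \to gf(a)),
\]
and since $gf(a) \leq g(c) \leq d$ in $N$, Lemma \ref{Lem:28} gives $d \to (g(c) \to gf(a)) \leq g(c) \to gf(a)$, completing the desired chain.

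The only real obstacle is bookkeeping which frame each symbol inhabits — $c \to f(a)$ is computed in $M$ and $g(c) \to gf(a)$ in $N$, while the morphism conditions relate elements of $K$ to ones in $G$ via the arrow $d \to (-)$ — but this is precisely what the preliminary Lemma \ref{Lem:28} is positioned to absorb, so once the arrows are chased into the correct frame the conclusion is immediate.
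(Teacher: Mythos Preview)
Your proposal is correct and follows essentially the same approach as the paper's proof: both chain the two morphism conditions to obtain $d \to g(c \to f(a)) \in G$, then bound this below $g(c) \to gf(a)$ via the inequality $g(c \to f(a)) \leq g(c) \to gf(a)$ followed by Lemma~\ref{Lem:28}. The only cosmetic difference is that the paper invokes the general rule $h(u \to v) \leq h(u) \to h(v)$ for frame maps, whereas you derive this inequality directly from $c \wedge (c \to f(a)) = f(a)$ and preservation of meets---these are the same step.
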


\begin{proof}
Clearly $gf$ is a frame morphism $L\rightarrow{}\downarrow\!g\left(  c\right)
$. Now%
\[
a\in F\Longrightarrow c\rightarrow f\left(  a\right)  \in K\Longrightarrow
d\rightarrow g\left(  c\rightarrow f\left(  a\right)  \right)  \in G,
\]
and%
\[
d\rightarrow g\left(  c\rightarrow f\left(  a\right)  \right)  \leq
d\rightarrow\left(  g\left(  c\right)  \rightarrow gf\left(  a\right)
\right)  \leq g\left(  c\right)  \rightarrow gf\left(  a\right)  .
\]
The first inequality is an instance of the rule that $h\left(  u\rightarrow
v\right)  \leq h\left(  u\right)  \rightarrow h\left(  v\right)  $ for any
frame map $h$, and the second inequality is an instance of Lemma \ref{Lem:28}.
\end{proof}

We denote the category of filtered frames with their morphisms by
$\mathbf{fF}$. Of special importance is the \emph{filtered frame of the reals}
$\left(  \mathcal{O}\left(  \mathbb{R}\smallsetminus\left\{  0\right\}
\right)  ,F_{0}\right)  $, where
\[
F_{0}\equiv\left\{  U\smallsetminus\left\{  0\right\}  :0\in U\in
\mathcal{O}\mathbb{R}\right\}  ,
\]
the filter of punctured neighborhoods of $0$.

A \emph{fully filtered frame} is a filtered frame $\left(  L,F\right)  $ for
which the filter $F$ is improper, i.e., a filtered frame of the form $\left(
L,L\right)  $. We denote the corresponding full subcategory by $\mathbf{ffF}$.
The point of the next lemma is that

\begin{lemma}
\label{Lem:27}The fully filtered frames comprise a full birecflective
subcategory of $\mathbf{fF}$. A reflector for $\left(  L,F\right)  $ is
\[
\left(  L,F\right)  \rightarrow\left(  L,L\right)  =\left(  a\longmapsto
a\right)  .
\]

\end{lemma}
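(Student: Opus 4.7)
The plan is to define the reflection unit as $\rho_{(L,F)} := (\top_L, \mathrm{id}_L)$, verify that it is a legitimate $\mathbf{fF}$-morphism $(L,F) \to (L,L)$, and then establish the universal property by a direct computation using the composition rule of the preceding lemma.

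For the first step, observe that $\mathrm{id}_L : L \to {\downarrow}\top_L = L$ is trivially a frame map, and the filter compatibility condition $a \in F \Longrightarrow \top_L \to \mathrm{id}_L(a) = a \in L$ holds vacuously because the target filter is the whole of $L$. Fullness of $\mathbf{ffF}$ in $\mathbf{fF}$ follows from the same observation: whenever the codomain filter is improper, the filter compatibility condition imposes no restriction, so $\mathbf{ffF}\bigl((L,L),(N,N)\bigr) = \mathbf{fF}\bigl((L,L),(N,N)\bigr)$ as sets of $(d,g)$-pairs.

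For the universal property, fix a fully filtered frame $(N,N)$ and an arbitrary $\mathbf{fF}$-morphism $(d,g) : (L,F) \to (N,N)$. A candidate factorization is a morphism $(d',g') : (L,L) \to (N,N)$ with $(d',g') \circ \rho_{(L,F)} = (d,g)$. By the composition formula, this composite equals $(g'(\top_L), g' \circ \mathrm{id}_L) = (g'(\top_L), g')$, so matching components forces $g' = g$ and $d = g'(\top_L) = g(\top_L)$. But $g : L \to {\downarrow}d$ is a frame map and therefore preserves $\top$, so $g(\top_L) = d$ holds automatically. Thus $(d', g') := (d,g)$, now reinterpreted with domain $(L,L)$, is the unique factorization; it is indeed a legitimate $\mathbf{fF}$-morphism because the target filter $N$ is improper. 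Finally, the reflection unit $\rho_{(L,F)}$ has underlying frame map $\mathrm{id}_L$, so it is simultaneously a monomorphism and an epimorphism in $\mathbf{fF}$, justifying the term \emph{bireflective}.

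There is essentially no obstacle here beyond bookkeeping: the only point that might look subtle is the equation $d = g(\top_L)$ in the factorization, but this is forced by the very definition of a $\mathbf{fF}$-morphism $(d,g)$, which packages $d$ as the image of $\top_L$ under $g$ (since $g$ lands in ${\downarrow}d$ and preserves $\top$).
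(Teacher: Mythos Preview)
Your proof is correct and follows essentially the same approach as the paper, which simply asserts in one sentence that any morphism $(L,F)\to(K,K)$ factors through the identity map; you have carefully unpacked the pair notation $(c,f)$ for morphisms, the composition rule, and the bimorphism claim, all of which the paper leaves implicit. One minor remark: fullness of $\mathbf{ffF}$ in $\mathbf{fF}$ is by definition (the paper introduces it as ``the corresponding full subcategory''), so your verification of it is redundant though harmless.
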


\begin{proof}
It is clear that any morphism $\left(  L,F\right)  \rightarrow\left(
K,K\right)  $ factors through this map.
\end{proof}

\subsection{Pointed frames are categorically equivalent to filtered frames}

\begin{lemma}
In any frame,
\[
\left(  b\vee c = \top \text{ and } b \wedge c=d \right)  \Longrightarrow b =
c\rightarrow d.
\]

\end{lemma}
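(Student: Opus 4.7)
The plan is to prove the two inequalities $b \leq c \to d$ and $c \to d \leq b$ separately, using only the standard adjunction defining the Heyting arrow and the frame distributive law.

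For the forward inequality, I would observe that the hypothesis $b \wedge c = d$ directly says $b \wedge c \leq d$, which by the definition of the Heyting arrow $c \to d = \bigvee\{x : x \wedge c \leq d\}$ yields $b \leq c \to d$ immediately.

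For the reverse inequality, let $x \equiv c \to d$. By definition of the arrow, $x \wedge c \leq d$, and since $d = b \wedge c \leq b$, we have $x \wedge c \leq b$. Now I would exploit $b \vee c = \top$ together with frame distributivity:
\[
x \;=\; x \wedge \top \;=\; x \wedge (b \vee c) \;=\; (x \wedge b) \vee (x \wedge c) \;\leq\; b \vee b \;=\; b.
\]
Combining the two inequalities gives $b = c \to d$.

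There is no real obstacle here; the only ingredients are the Heyting adjunction (available since every frame is a complete Heyting algebra) and the binary distributive law. The key idea — that $b \vee c = \top$ lets one split any element $x$ as $(x \wedge b) \vee (x \wedge c)$ and then absorb both summands into $b$ — is the whole content of the lemma.
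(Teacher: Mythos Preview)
Your argument is correct. Both directions are handled cleanly: $b\wedge c=d$ gives $b\leq c\to d$ by the Heyting adjunction, and for the converse you use $b\vee c=\top$ with distributivity to bound $c\to d$ above by $b$.

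As for comparison with the paper: the paper states this lemma but supplies no proof at all---it is evidently regarded as a routine fact about Heyting algebras and left to the reader. Your proof is exactly the standard one, so there is nothing to contrast.
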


Let $\mathcal{D}:\mathbf{fF}\rightarrow\mathbf{pF}$ be the functor whose
action on an object is $\left(  L,F\right)  \longmapsto2_{F}L$ and whose
action on a morphism is
\[
\left(  a\longmapsto f\left(  a\right)  \right)  \longmapsto\left(  \left(
\varepsilon,a\right)  \longmapsto\left(  \varepsilon,f\left(  a\right)
\right)  \right)  ,
\]
and let $\mathcal{E}:\mathbf{pF}\rightarrow\mathbf{fF}$ be the functor whose
action on an object is
\[
\left(  M,\ast_{M}\right)  \longmapsto
     \left(  {\downarrow} p_M, p_M \wedge (M \smallsetminus {\downarrow} p_M)\right)
\]
and whose action on a morphism is
\[
\left(  g:\left(  M,\ast_{M}\right)  \rightarrow\left(  N,\ast_{N}\right)\right)  
\longmapsto g|_{L}, \qquad L\equiv{\downarrow} p_M .
\]

\begin{proposition}
\label{Prop:1}The functors
\[
\mathbf{fF}\underset{\mathcal{E}}{\overset{\mathcal{D}}{\leftrightarrows}%
}\mathbf{pF}%
\]
constitute a categorical equivalence. In particular, the restrictions of these
functors provide a categorical equivalence between $\mathbf{ffF}$ and
$\mathbf{ipF}$. The units of the equivalence are the isomorphisms
\begin{align*}
M  &  \rightarrow2_{F}L,\;L\equiv\ast_{M}^{-1}\left(  \bot\right)
,\;p\equiv\bigvee L,\;F\equiv p\wedge\left(  M\smallsetminus L\right)
,\;\left(  M,\ast_{M}\right)  \in\mathbf{pF},\\
\left(  L,F\right)   &  \rightarrow\left(  \bot\times L,\bot\times F\right)
,\;\left(  L,F\right)  \in\mathbf{fF},
\end{align*}
defined by the rules%
\begin{align*}
a  &  \longmapsto\left(  \ast_{M}\left(  a\right)  ,p\wedge a\right)
,\;p=\bigvee\ast_{M}^{-1}\left(  \bot\right)  ,\;a\in M,\\
a  &  \longmapsto\left(  \bot,a\right)  ,\;a\in L\text{.}%
\end{align*}

\end{proposition}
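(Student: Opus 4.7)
The plan is to establish the equivalence by exhibiting the two natural isomorphisms claimed in the statement, for which the substantive object-level work is already contained in Proposition \ref{Prop:2}.

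First I would verify that $\mathcal{D}$ and $\mathcal{E}$ are well-defined on objects. For $\mathcal{D}$ this is the preceding lemma: $2_F L$ is a pointed frame, regular precisely because $F$ is. For $\mathcal{E}$, given $(M,\ast_M)$ with kernel $p = p_M$, the set ${\downarrow} p$ carries the open-quotient frame structure, and by Proposition \ref{Prop:2} the set $F = \{a \wedge p : a \in M \smallsetminus {\downarrow} p\}$ is a regular filter on ${\downarrow} p$, being precisely the filter $\{\pi_M(a) : \ast_M(a) = \top\}$ under the identification $Q_M \cong {\downarrow} p$ of the open quotient as a principal down-set.

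Next I would establish the two units. The map $(L,F) \to (\bot \times L, \bot \times F)$, $a \mapsto (\bot, a)$, is manifestly a frame isomorphism of $L$ onto the principal down-set ${\downarrow}(\bot,\top)$ of $2L$, and carries $F$ bijectively to $\bot \times F$, so it is an $\mathbf{fF}$-isomorphism. The map $M \to 2_F L$, $a \mapsto (\ast_M(a),\, p \wedge a)$, is precisely the standard representation $\tau_M$ of Proposition \ref{Prop:2}, which was shown there to be an injective $\mathbf{pF}$-morphism with range exactly $2_F Q = 2_F L$; hence $\tau_M$ is a $\mathbf{pF}$-isomorphism. With the two units identified as isomorphisms at each object, functoriality on morphisms and naturality of the units follow by transport: any $\mathbf{pF}$-morphism $g:M \to N$ is identified via the $\tau$'s with a $\mathbf{pF}$-morphism $2_{F_M} L_M \to 2_{F_N} L_N$, and the latter is of the form $\mathcal{D}(c,f)$ for $c = g(p_M) \in L_N$ and $f = g|_{L_M}: L_M \to {\downarrow} c$, with the required Heyting-arrow filter condition being exactly the statement that the induced map lands in $2_{F_N} L_N$.

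Finally, the restriction to the subcategories $\mathbf{ffF}$ and $\mathbf{ipF}$ is immediate. If $M \in \mathbf{ipF}$ then the kernel $p$ is complemented in $M$, say by $q$ with $p \wedge q = \bot$ and $p \vee q = \top$; for any $a \leq p$ the element $a \vee q$ lies outside ${\downarrow} p$ and satisfies $(a \vee q) \wedge p = a$, whence $F = L$, i.e., $\mathcal{E}(M) \in \mathbf{ffF}$. Conversely, in $\mathcal{D}(L,L) = 2L$ the kernel $(\bot,\top)$ is complemented by $(\top,\bot)$, so its point is isolated. The main obstacle is the morphism-level bookkeeping, since the Heyting arrow appears in the defining condition for $\mathbf{fF}$-morphisms but is invisible in the formula for $\mathcal{D}$ on morphisms; however, once each $\mathbf{pF}$-morphism is realized via $\tau$ as a map between pointed frames of the form $2_F L$, the correspondence with a pair $(c,f)$ is forced, and the verification becomes entirely routine.
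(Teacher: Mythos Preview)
Your proposal is correct and follows essentially the same approach as the paper: the paper's own proof is a single sentence declaring the result to be ``a straightforward elaboration of Proposition~\ref{Prop:2} and its proof,'' and what you have written is precisely such an elaboration, identifying the unit $M \to 2_F L$ with the standard representation $\tau_M$ and checking the remaining details. Your treatment is in fact considerably more explicit than the paper's, particularly in handling the morphism-level correspondence and the restriction to $\mathbf{ffF}$ and $\mathbf{ipF}$.
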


\begin{proof}
This is a straightforward elaboration of Proposition \ref{Prop:2} and its proof.
\end{proof}

It follows from Lemma \ref{Lem:27} and Proposition \ref{Prop:1} that
$\mathbf{ipF}$ is a full bireflective subcategory of $\mathbf{pF}$. Let us
explicitly record the reflector arrow.

\begin{corollary}
\label{Cor:4}The extension
\[
\nu_{M,}:\left(  M,\ast_{M}\right)  \rightarrow2L,\;L=\ast_{M}^{-1}\left(
\bot\right)  ,
\]
is the free isolated point frame over the pointed frame $\left(  M,\ast
_{M}\right)  $
\end{corollary}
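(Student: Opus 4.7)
The plan is to deduce the corollary directly from Lemma~\ref{Lem:27} combined with the categorical equivalence established in Proposition~\ref{Prop:1}. The strategy is to transport the bireflection of $\mathbf{ffF}$ into $\mathbf{fF}$ across the equivalence $\mathcal{D}\dashv\mathcal{E}$ to obtain the desired bireflection of $\mathbf{ipF}$ into $\mathbf{pF}$.

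Concretely, I would start by applying $\mathcal{E}$ to $(M,\ast_M)$ to land in $\mathbf{fF}$ as the filtered frame $(L,F)$ with $L={\downarrow}p_M=\ast_M^{-1}(\bot)$ and $F=p_M\wedge(M\smallsetminus L)$. Lemma~\ref{Lem:27} then supplies the reflection arrow $(L,F)\to(L,L)$ given by the identity of the underlying frame. Applying $\mathcal{D}$ produces a pointed frame morphism $2_F L\to 2L$, and then precomposition with the unit isomorphism $(M,\ast_M)\to 2_F L$ of the equivalence yields a $\mathbf{pF}$-morphism $(M,\ast_M)\to 2L$. The next step is to verify that this composite agrees with the map $\nu_M$ described in Proposition~\ref{Prop:1}, namely $a\longmapsto(\ast_M(a),p_M\wedge a)$; this is immediate by unfolding the definitions of the unit isomorphism and of $\mathcal{D}$, since the second component of the reflector on $2_F L$ is the inclusion $2_F L\hookrightarrow 2L$.

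It remains to check the universal property, namely that any $\mathbf{pF}$-morphism $g:(M,\ast_M)\to(N,\ast_N)$ with $(N,\ast_N)\in\mathbf{ipF}$ factors uniquely through $\nu_M$. Under $\mathcal{E}$, the target $(N,\ast_N)$ corresponds to a fully filtered frame (this is the restriction assertion in Proposition~\ref{Prop:1}), so the image of $g$ under $\mathcal{E}$ lands in $\mathbf{ffF}$ and therefore factors uniquely through the reflector of Lemma~\ref{Lem:27}. Transporting this factorization back through $\mathcal{D}$, and using that the unit isomorphisms are natural, produces the unique factorization of $g$ through $\nu_M$ in $\mathbf{pF}$.

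The proof is essentially formal, and the only genuine obstacle is the bookkeeping needed to confirm that the composite $M\to 2_F L\to 2L$ coming from the equivalence-plus-reflector construction coincides on the nose with the explicit map $\nu_M$ of Proposition~\ref{Prop:1}. Once this identification is in place, both the factorization and uniqueness come for free from the corresponding properties in $\mathbf{fF}$.
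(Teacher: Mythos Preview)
Your proposal is correct and follows essentially the same route as the paper: factor $\nu_M$ as the isomorphism $M\to 2_F L$ (from Proposition~\ref{Prop:2}/\ref{Prop:1}) followed by the inclusion $2_F L\to 2L$, and recognize the latter as the image under the equivalence of the $\mathbf{ffF}$-reflector from Lemma~\ref{Lem:27}. The paper's proof is terser and leaves the transport of the universal property along the equivalence implicit, whereas you spell it out, but the content is the same.
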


\begin{proof}
We pointed out in Proposition \ref{Prop:2} that $\nu_{M}$ factors through
$2_{F}L$, and that the initial factor $M\rightarrow2_{F}L$ is a $\mathbf{pF}%
$-isomorphism. The final factor $2_{F}L\rightarrow2L$ is the extension of
Lemma \ref{Lem:27}.
\end{proof}

We offer an example for the reader's edification.

\begin{example}
Let $u$ be a free ultrafilter on a complete atomless Boolean algebra $B$, and
let $M$ be the pointed frame
\[
\left(  2_{u}B,\ast\right)  =\mathcal{D}\left(  B,u\right)  =\left\{  \left(
\varepsilon,b\right)  \in2\times B:\varepsilon=\top\Longrightarrow b\in
u\right\}  .
\]
Observe that the only point of $M$ is the designated point $\ast$, and it is
far from isolated. In fact, the free isolated point frame over $M$ is the
inclusion $2_{u}B\rightarrow2B=2\times B$. The only point of $2B$ is again its
designated point, but this time it is isolated. Note that the passage from
$2_{u}B$ to $2B$ represents a considerable enlargement of the frame.
\end{example}

\subsection{The trunc $\mathcal{R}_{\mathbf{fF}}\left(  L,F\right)  $}

For a filtered frame $\left(  L,F\right)  $, we denote the family of filtered
frame morphisms $\left(  \mathcal{O}\mathbb{R},F_{0}\right)  \rightarrow
\left(  L,F\right)  $ by
\begin{align*}
\mathcal{R}_{\mathbf{fF}}\left(  L,F\right)   &  \equiv\hom_{\mathbf{fF}%
}\left(  \left(  \mathcal{O}\mathbb{R},F_{0}\right)  ,\left(  L,F\right)
\right) \\
&  =\left\{  a\in\mathcal{R}L:\forall~U\in\mathcal{O}\mathbb{R}~\left(  0\in
U\Longrightarrow a\left(  U\right)  \in F\right)  \right\}  .
\end{align*}
It is easy to see that $\mathcal{R}_{\mathbf{fF}}\left(  L,F\right)  $ is
isomorphic to a subtrunc of $\mathcal{R}L$.

\begin{corollary}
\label{Cor:1}For a pointed frame $M$ with $\mathcal{E}M\equiv\left(
L,F\right)  $, $\mathcal{R}_{\mathbf{pF}}M$ and $\mathcal{R}_{\mathbf{fF}%
}\left(  L,F\right)  $ are isomorphic truncs.
\end{corollary}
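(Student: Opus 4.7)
The plan is to realize the desired isomorphism as a composition of two trunc isomorphisms, reducing first to the concrete case $M=2_{F}L$ and then projecting out the point coordinate. By Proposition \ref{Prop:1}, the standard representation $\tau_{M}:M\to 2_{F}L$ is a $\mathbf{pF}$-isomorphism, so postcomposition $a\mapsto\tau_{M}\circ a$ gives a trunc isomorphism $\mathcal{R}_{\mathbf{pF}}M\to\mathcal{R}_{\mathbf{pF}}(2_{F}L)$ via functoriality of the hom-functor $\mathcal{R}_{\mathbf{pF}}=\hom_{\mathbf{pF}}(\mathcal{O}_{0}\mathbb{R},-)$. This reduces the problem to exhibiting a trunc isomorphism $\mathcal{R}_{\mathbf{pF}}(2_{F}L)\to\mathcal{R}_{\mathbf{fF}}(L,F)$.

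For the second step, I would define the candidate map by projection onto the second coordinate. Any $a\in\mathcal{R}_{\mathbf{pF}}(2_{F}L)$ is a frame map $a:\mathcal{O}\mathbb{R}\to 2_{F}L$ commuting with the points; the commutativity forces the first coordinate $a_{1}:\mathcal{O}\mathbb{R}\to 2$ to be the insertion-of-$0$ point map, which corresponds to $0\in\mathbb{R}\cong\mathcal{R}(2)$. Writing $a(U)=(a_{1}(U),a_{2}(U))$, the residual data $a_{2}:\mathcal{O}\mathbb{R}\to L$ is a frame map, and the requirement that the image lies in $2_{F}L$ is exactly $0\in U\Rightarrow a_{2}(U)\in F$, which is the defining condition for membership in $\mathcal{R}_{\mathbf{fF}}(L,F)$. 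Conversely, any such $a_{2}$ extends uniquely to a $\mathbf{pF}$-morphism by pairing with the point map, giving a two-sided inverse to the projection.

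Finally, I would check that the projection bijection preserves trunc structure. The frame operations on $2\times L$ are computed componentwise, and since the $\ell$-group operations and truncation on $\mathcal{R}(2\times L)$ are defined uniformly in terms of the frame operations (sum via joins of meets, meet via meet, truncation via meet with the constant $1$), they decompose componentwise as well. Thus $(a+b)_{2}=a_{2}+b_{2}$, $(a\wedge b)_{2}=a_{2}\wedge b_{2}$, $(a\vee b)_{2}=a_{2}\vee b_{2}$, $(-a)_{2}=-a_{2}$, and $(\overline{a})_{2}=\overline{a_{2}}$. The first coordinate is fixed to $0\in\mathbb{R}\cong\mathcal{R}(2)$ and is closed under these operations ($0+0=0$, $\overline{0}=0$, etc.), consistent with Lemma \ref{Lem:17} and with the trunc structure $\mathcal{R}_{\mathbf{pF}}(2_{F}L)$ inherits from $\mathcal{R}(2_{F}L)$. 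The only real obstacle is the routine bookkeeping verifying this componentwise decomposition on $\mathcal{R}(2\times L)$; once that is in hand, the second-coordinate projection is visibly a trunc isomorphism onto $\mathcal{R}_{\mathbf{fF}}(L,F)$, and the composition with the first step completes the proof.
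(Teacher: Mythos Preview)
Your proposal is correct and follows essentially the same route as the paper: identify $M$ with $2_{F}L$ via the standard representation (the paper cites Proposition~\ref{Prop:2}, you cite the equivalent Proposition~\ref{Prop:1}), then observe that a pointed frame map $\mathcal{O}_{0}\mathbb{R}\to 2_{F}L$ corresponds exactly, via the second coordinate, to a frame map $\mathcal{O}\mathbb{R}\to L$ satisfying the filter condition. The paper phrases this correspondence as ``$\widehat{a}$ factors through the insertion $2_{F}L\hookrightarrow 2L$ iff $a\in\mathcal{R}_{\mathbf{fF}}(L,F)$'' and omits your componentwise verification of the trunc operations, but the argument is the same.
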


\begin{proof}
In the diagram below, \begin{figure}[tbh]
\setlength{\unitlength}{4pt}
\par
\begin{center}
\begin{picture}(36,12)(-12,0)
\small
\put(0,12){\makebox(0,0){$2L$}}
\put(12,0){\makebox(0,0){$\mathcal{O}\mathbb{R}$}}
\put(12,12){\makebox(0,0){$2$}}
\put(0,0){\makebox(0,0){$L$}}
\put(-12,12){\makebox(0,0){$2_F L$}}
\put(-24,12){\makebox(0,0){$M$}}
\put(3,12){\vector(1,0){7}}
\put(9,0){\vector(-1,0){7}}
\put(12,2){\vector(0,1){8}}
\put(0,10){\vector(0,-1){8}}
\put(10,2){\vector(-1,1){8}}
\put(-8.5,12){\vector(1,0){6}}
\put(-22,12){\vector(1,0){6.5}}
\put(6.25,13.5){\makebox(0,0){$\ast$}}
\put(5.5,-2){\makebox(0,0){$a$}}
\put(14,6){\makebox(0,0){$0$}}
\put(-2,6){\makebox(0,0){$\pi$}}
\put(7.5,7.5){\makebox(0,0){$\widehat{a}$}}
\end{picture}
\end{center}
\end{figure}we may identify $M$ with $2_{F}L$ by Proposition \ref{Prop:2}.
Then the condition that $\widehat{a}$ factors through the insertion
$2_{F}L\rightarrow2L$ is exactly the condition that $a$ belongs to
$\mathcal{R}_{\mathbf{fF}}\left(  L,F\right)  $.
\end{proof}

We call an element $a\in L$ \emph{cocompact} if, for $S\subseteq L$,
\[
a\vee\bigvee S=\top\Longrightarrow\exists~S_{0}\subseteq_{\omega}S~\left(
a\vee\bigvee S_{0}=\top\right)  .
\]
Here the notation $S_{0}\subseteq_{\omega}S$ means that $S_{0}$ is a finite
subset of $S$.

Recall that the compactness degree of a frame $L$ is the least regular
cardinal $\kappa$ such that every subset $S\subseteq L$ such that $\bigvee
S=\top$ has a subset $S_{0}\subseteq_{\kappa}S$ with $\bigvee S_{0}=\top$. We
write $\operatorname{comp}L=\kappa$.

\begin{lemma}
\label{Lem:14}Let $L$ be a frame, let $F$ be a filter on $L$, and let $M$
abbreviate $2_{F}L$.

\begin{enumerate}
\item When restricted to $M$, the projection $M\rightarrow L\equiv\left(
\left(  \varepsilon,a\right)  \longmapsto a\right)  $ is dense iff $F$ is a
proper filter on $L$.

\item Suppose $\bigvee_{F}b^{\ast}=\top$. Then $M$ is regular if $L$ is.

\item $\operatorname{comp}M\leq\operatorname{comp}L$.

\item If $F$ is contained in the filter of cocompact elements of $L$ then $M$
is compact.
\end{enumerate}
\end{lemma}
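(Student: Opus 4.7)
The plan is to work coordinate-wise, exploiting that $M=2_{F}L$ is a subframe of the product $2\times L$. In particular $\top_{M}=(\top,\top)$, $\bot_{M}=(\bot,\bot)$, and arbitrary joins in $M$ are computed coordinate-wise.

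For (1), the projection $M\to L$ sends $(\varepsilon,a)\mapsto a$, so its preimage of $\bot_{L}$ consists of those $(\varepsilon,\bot)$ that lie in $M$. Now $(\bot,\bot)=\bot_{M}$ is always present, while $(\top,\bot)\in M$ iff $\bot\in F$, i.e., iff $F$ is improper. Density therefore holds exactly when $F$ is proper. Part (2) I would dispatch by citing the earlier lemma already proved in this section (that $2_{F}L$ is regular iff $F$ is regular), applied with the standing hypothesis that $L$ is regular.

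For (3) and (4) the common setup is as follows. Given $S\subseteq M$ with $\bigvee S=(\top,\top)$, write $S=\{(\varepsilon_{i},a_{i}):i\in I\}$. Joining coordinate-wise, some $\varepsilon_{j}=\top$ (whence $a_{j}\in F$ by definition of $2_{F}L$) and $\bigvee_{i}a_{i}=\top$ in $L$. For (3), with $\operatorname{comp}L=\kappa$, extract $I_{0}\subseteq_{\kappa}I$ with $\bigvee_{I_{0}}a_{i}=\top$; then $\{(\varepsilon_{i},a_{i}):i\in I_{0}\}\cup\{(\varepsilon_{j},a_{j})\}$ is a subcover of $S$ of cardinality $<\kappa$, since $\kappa$ is an infinite regular cardinal. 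For (4), $a_{j}\in F$ is cocompact by hypothesis, so cocompactness applied to $\{a_{i}:i\neq j\}$ yields a finite $I_{0}$ with $a_{j}\vee\bigvee_{I_{0}}a_{i}=\top$; the corresponding elements of $S$ together with $(\varepsilon_{j},a_{j})$ form a finite subcover.

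No step presents a real obstacle. The only details requiring care are (a) the equivalence $(\top,\bot)\in M\Leftrightarrow F$ is improper in (1), and (b) remembering to include the ``first coordinate witness'' $(\varepsilon_{j},a_{j})$ when refining subcovers in (3) and (4).
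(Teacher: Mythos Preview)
Your proof is correct and follows the natural coordinate-wise approach that the paper intends. The paper only writes out part (2) explicitly, reproving verbatim the argument from the earlier lemma on regularity of $2_{F}L$; your choice to simply cite that lemma is more economical and entirely legitimate. Your arguments for (1), (3), and (4) supply precisely the routine details the paper leaves to the reader.
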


\begin{proof}
(2) We claim that, for any $c\in L$,
\[
\left(  \bot,c\right)  =\bigvee\left\{  \left(  \bot,a\wedge b^{\ast}\right)
:b\in F,~a\prec c\right\}  .
\]
This is true because
\begin{gather*}
\bigvee_{b\in F,~a\prec c}\left(  \bot,a\wedge b^{\ast}\right)  = \left(
\bot,\bigvee_{b\in F,a\prec c}\left(  a\wedge b^{\ast}\right)  \right)  =
\left(  \bot,\bigvee_{a\prec c}\bigvee_{b\in F}\left(  a\wedge b^{\ast
}\right)  \right)  =\\
\left(  \bot,\bigvee_{a\prec c}\left(  a\wedge\bigvee_{b\in F}b^{\ast}\right)
\right)  =\left(  \bot,\bigvee_{a\prec c}a\right)  =\left(  \bot,c\right)  .
\end{gather*}
And $\left(  \top,a^{\ast}\vee b\right)  $ witnesses $\left(  \bot,a\wedge
b^{\ast}\right)  \prec\left(  \bot,c\right)  $, for
\[
\left(  \top,a^{\ast}\vee b\right)  \wedge\left(  \bot,a\wedge b^{\ast
}\right)  =\left(  \bot,\bot\right)  \;\text{and\ }\left(  \top,a^{\ast}\vee
b\right)  \vee\left(  \bot,c\right)  =\left(  \top,\top\right)  .
\]
On the other hand, it is obvious that $\left(  \top,b\right)  =\left(
\top,\bigvee_{a\prec b}a\right)  =\bigvee_{a\prec b}\left(  \top,a\right)  $
for any $b\in F$, and if $a\prec b$ it is just as clear that $\left(
\top,a\right)  \prec\left(  \top,b\right)  $. We leave the straightforward
proofs of (3) and (4) to the reader.
\end{proof}

\subsection{The spectrum of $A$}

We are finally prepared to introduce the frame canonically associated with $A$
in the functorial representation we seek. The \emph{spectral frame of }$A$ is
the frame
\[
\mathcal{M}A\equiv2_{F}\mathcal{K}A,
\]
where $F$ is the filter on $\mathcal{K}A$ generated by the truncation kernels
of the form $a\blacktriangleleft1$, $a\in\overline{A}$. Our use of the letter
$\mathcal{M}$ to denote the spectrum is intended to acknowledge the
contributions of James Madden, who was responsible in large part for the
localic representation in $\mathbf{W}$ (\cite{Madden:1990}). We abbreviate
$\mathcal{M}A$ to $M$ for the rest of this section.

\begin{theorem}
\label{Thm:2}$M$ is a regular Lindel\"{o}f frame.
\end{theorem}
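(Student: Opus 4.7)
The plan is to apply parts (2) and (3) of Lemma \ref{Lem:14} with $L = \mathcal{K}A$ and $F$ the filter generated by the truncation kernels $a\blacktriangleleft1$ for $a\in\overline{A}$. Lindel\"{o}fness falls out immediately: by Theorem \ref{Thm:1}, $\operatorname{comp}\mathcal{K}A = \omega_{1}$, and Lemma \ref{Lem:14}(3) gives $\operatorname{comp} M \leq \operatorname{comp}\mathcal{K}A = \omega_{1}$, so $M$ is Lindel\"{o}f. So the real content is regularity, which by Lemma \ref{Lem:14}(2) (applied in tandem with the fact that $\mathcal{K}A$ is regular, Lemma \ref{Lem:2}) reduces to showing that $\bigvee_{b\in F}b^{\ast} = \top$ in $\mathcal{K}A$.

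Since any element of $F$ lies above some $a\blacktriangleleft1$ with $a\in\overline{A}$, and since $\overline{A}$ is closed under finite joins (by \cite[3.3.1(8)]{Ball:2013}) so that Lemma \ref{Lem:22}(2) shows $F$ is just the up-set of $\{a\blacktriangleleft1:a\in\overline{A}\}$, one has
\[
\bigvee_{b\in F}b^{\ast} \;=\; \bigvee_{a\in\overline{A}}(a\blacktriangleleft1)^{\ast}.
\]
Because the principal truncation kernels generate $\mathcal{K}A$, it then suffices to show that $[c] \leq \bigvee_{a\in\overline{A}}(a\blacktriangleleft1)^{\ast}$ for each $c\in A^{+}$.

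The key step is to choose good witnesses. For each $c\in A^{+}$ and $n\in\mathbb{N}$, take $a_{n}\equiv\overline{nc}\in\overline{A}$. Using Lemma \ref{Lem:15} one gets $a_{n}\blacktriangleleft 1 = \bigvee_{0<s<1}[\overline{nc}\ominus s]^{\ast} = \bigvee_{0<s<1}[nc\ominus s]^{\ast}$, and rewriting $[nc\ominus s] = [c\ominus s/n]$ (a nonzero scalar multiple generates the same principal kernel) this becomes $\bigvee_{0<t<1/n}[c\ominus t]^{\ast}$. Now Lemma \ref{Lem:22}(4) with $s=r=1$ (applied to $nc$) gives $(nc)\blacktriangleleft1\wedge[nc\ominus1]=\bot$, hence
\[
\bigl[c\ominus\tfrac{1}{n}\bigr] \;=\; [nc\ominus1] \;\leq\; \bigl((nc)\blacktriangleleft1\bigr)^{\ast} \;=\; (a_{n}\blacktriangleleft1)^{\ast}.
\]
Joining over $n$ and invoking the regularity witness $[c] = \bigvee_{n}[c\ominus1/n]$ from the proof of Lemma \ref{Lem:2} (which is Corollary \ref{Cor:4} together with Lemma \ref{Lem:9}) yields $[c]\leq\bigvee_{n}(a_{n}\blacktriangleleft1)^{\ast}\leq\bigvee_{a\in\overline{A}}(a\blacktriangleleft1)^{\ast}$, and summing over $c\in A^{+}$ gives the desired $\top$.

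The only genuine obstacle is identifying the right family of witnesses: a single $a\in\overline{A}$ (for instance $\overline{c}$ itself) will not suffice in the non-unital setting, since then $(c\blacktriangleleft1)^{\ast}$ only dominates $[c\ominus1]$, which can easily be $\bot$ for elements $c$ that are everywhere less than the would-be unit. Scaling $c$ up to $nc$ before truncating is precisely what lets one recover $[c\ominus1/n]$, and hence, in the limit over $n$, all of $[c]$. Everything else is bookkeeping from Lemmas \ref{Lem:14}, \ref{Lem:22}, \ref{Lem:15}, and Theorem \ref{Thm:1}.
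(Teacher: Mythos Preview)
Your proof is correct and follows essentially the same route as the paper: reduce regularity via Lemma~\ref{Lem:14}(2) to $\bigvee_{a\in\overline{A}}(a\blacktriangleleft1)^{\ast}=\top$, then use the scaling $c\mapsto nc$ together with Lemma~\ref{Lem:9} (equivalently Lemma~\ref{Lem:22}(4)) to show $[c\ominus1/n]\leq(\overline{nc}\blacktriangleleft1)^{\ast}$, and conclude by Lemma~\ref{Lem:1}. If anything your write-up is more explicit than the paper's about why the term indexed by $nc$ belongs to the join over $\overline{A}$ (your invocation of Lemma~\ref{Lem:15} to identify $(nc)\blacktriangleleft1$ with $(\overline{nc})\blacktriangleleft1$); the only cosmetic point is that Lemma~\ref{Lem:22} is stated for $a\in\overline{A}$, so strictly speaking you should verify part (4) directly for $nc$, which is immediate from your computed form $\bigvee_{0<t<1/n}[c\ominus t]^{\ast}$.
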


\begin{proof}
Lemma \ref{Lem:14} is relevant here. By part (3), and in light of Theorem
\ref{Thm:1}, $M$ is Lindel\"{o}f. By part (2), we need only show that
\[
\bigvee_{\overline{A}}\left(  \bigvee_{0<r<1}\left[  \left(  a\ominus
r\right)  \right]  ^{\ast}\right)  ^{\ast}=\top\text{ \ in }\mathcal{K}A
\]
in order to show that $M$ is regular. Since the identity $\left(  \bigvee
a^{\ast}\right)  ^{\ast}=\bigwedge a^{\ast\ast}$ holds in any frame, this
amounts to showing that
\begin{equation}
\bigvee_{\overline{A}}\bigwedge\limits_{0\leq r<1}\left[  \left(  a\ominus
r\right)  \right]  ^{\ast\ast}=\top. \tag{$\ast$}%
\end{equation}
For that purpose fix $a\in\overline{A}$, $r\in\mathbb{Q}$, and $n\in
\mathbb{N}$ such that $r<1$. From Lemma \ref{Lem:9} we get
\[
\left[  na\ominus1\right]  \prec\left[  na\ominus r\right]  \Longrightarrow
\left[  na\ominus1\right]  \leq\left[  na\ominus r\right]  ^{\ast\ast},
\]
and, by letting the $r$ vary, $\left[  na\ominus1\right]  \leq\bigwedge_{0\leq
r<1}\left[  \left(  a\ominus r\right)  \right]  ^{\ast\ast}$. Thus, whatever
truncation kernel is represented by the left side of the expression in ($\ast
$), it contains $na\ominus1$ for all $n$. But then it contains $a$ by Lemma
\ref{Lem:1}, and because $a$ was chosen arbitrarily, it contains all the
elements of $\overline{A}$. Since it must satisfy part (2) of Lemma
\ref{Lem:5}, it must contain $A$. We have shown $M$ to be regular.
\end{proof}

Our plan is to represent $A$ as a subobject of $\mathcal{R}_{0}\mathcal{M}A$.
One important detail remains to be checked.

\begin{lemma}
$\underline{A}$ is a subtrunc of $\mathcal{R}_{\mathbf{fF}}\mathcal{K}A$.
\end{lemma}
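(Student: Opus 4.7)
The plan is to combine Theorem~\ref{Thm:6}, which already identifies $\underline{A}$ as a subtrunc of $\mathcal{RK}A$, with the straightforward verification that each $\underline{a}$ satisfies the additional filter condition characterising $\mathcal{R}_{\mathbf{fF}}(\mathcal{K}A,F)$. Since that class is itself a subtrunc of $\mathcal{RK}A$, and every element of $\underline{A}$ is an $\ell$-group combination of elements $\underline{a}$ with $a\in A^{+}$, it is enough to show that for each such $a$ and each open $U\ni 0$ in $\mathbb{R}$, the element $\underline{a}(U)$ lies in $F$.

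Because $F$ is a filter and $\underline{a}$ is order-preserving, I would reduce the task to verifying $\underline{a}(-s,s)\in F$ for rational $s>0$, since the intervals $(-s,s)$ form a basis of open neighbourhoods of $0$. For $a\in A^{+}$, Definition~\ref{Def:4} gives $\underline{a}(-s,\infty)=\top$ and Lemma~\ref{Lem:18} gives $\underline{a}(-\infty,s)=a\blacktriangleleft s$, so $\underline{a}(-s,s)=a\blacktriangleleft s$. The job therefore reduces to proving $a\blacktriangleleft s\in F$ for every $s\in\mathbb{Q}^{+}$.

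I expect the rescaling step to be the real work: the generators of $F$ have the shape $b\blacktriangleleft 1$ with $b\in\overline{A}$, so one must exhibit such a $b$ for which $b\blacktriangleleft 1\subseteq a\blacktriangleleft s$ (equality, in fact). The natural choice is $b\equiv\overline{a/s}$, which lies in $\overline{A}$. Unwinding the definition of $\ominus$ yields $(a/s)\ominus r=(1/s)(a\ominus rs)$, and since nonzero scalar multiples of a generator produce the same principal truncation kernel, $[(a/s)\ominus r]=[a\ominus rs]$. Combining this identity with Lemma~\ref{Lem:15}, which gives $[\overline{a/s}\ominus r]=[(a/s)\ominus r]$ for $0\leq r<1$, yields
\[
b\blacktriangleleft 1
\;=\;\bigvee_{0<r<1}[\overline{a/s}\ominus r]^{\ast}
\;=\;\bigvee_{0<r<1}[a\ominus rs]^{\ast}
\;=\;\bigvee_{0<u<s}[a\ominus u]^{\ast}
\;=\;a\blacktriangleleft s,
\]
placing $a\blacktriangleleft s$ in the generating family of $F$, hence in $F$ itself. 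This finishes the verification.
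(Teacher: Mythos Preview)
Your proof is correct and follows essentially the same route as the paper's: both reduce to showing $\underline{a}(-s,s)\in F$ for $a\in A^{+}$, identify this element with $a\blacktriangleleft s$, and then exhibit $a\blacktriangleleft s=\overline{a/s}\blacktriangleleft 1\in F$ via the rescaling identity $[a\ominus rs]=[(a/s)\ominus r]=[\overline{a/s}\ominus r]$ (the last step by Lemma~\ref{Lem:15}). The only cosmetic difference is that you reduce to positive $a$ by invoking the subtrunc closure of $\mathcal{R}_{\mathbf{fF}}$, whereas the paper passes from $a$ to $|a|$ directly.
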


\begin{proof}
We must show that $\underline{a}\left(  U\right)  \in F$ for $a\in A$ and
$U\in\mathcal{O}\mathbb{R}$ such that $0\in U$. Without loss of generality we
may assume that $U$ has the form $\left(  -\varepsilon,\varepsilon\right)  $
for some $0<\varepsilon\in\mathbb{R}$, and, since $\underline{a}\left(
-\varepsilon,\varepsilon\right)  =\underline{\left\vert a\right\vert }\left(
-\infty,\varepsilon\right)  $, we need only show that $\underline{a}\left(
-\infty,\varepsilon\right)  \in F$ for any $a\in A^{+}$. According to Lemma
\ref{Lem:17}, $\underline{a}\left(  -\infty,\varepsilon\right)  =\bigvee
_{0<s<\varepsilon}\left[  a\ominus s\right]  ^{\ast}$. But if we replace $s$
by $r\varepsilon$, we get
\[
\underline{a}\left(  -\infty,\varepsilon\right)  =\bigvee_{0<r<1}\left[
a\ominus r\varepsilon\right]  ^{\ast}=\bigvee_{0<r<1}\left[  \overline
{a/\varepsilon}\ominus r\right]  ^{\ast}=\overline{a/\varepsilon
}\blacktriangleleft1\in F.
\]
The second equality is justified by the observation that
\[
\left[  a\ominus r\varepsilon\right]  =\left[  \varepsilon\left(
a/\varepsilon\ominus r\right)  \right]  =\left[  a/\varepsilon\ominus
r\right]  =\left[  \overline{a/\varepsilon}\ominus r\right]  . \qedhere
\]

\end{proof}

\section{The functorial representation\label{Sec:2}}

We have in hand the components of the representation we seek.
\begin{figure}[tbh]
\setlength{\unitlength}{4pt}
\par
\begin{center}
\begin{picture}(25,1)(0,0)
\put(-21,0){\makebox(0,0){$A$}}
\put(0,0){\makebox(0,0){$\underline{A} \leq \mathcal{R}_F \mathcal{K} A$}}
\put(26,0){\makebox(0,0){$\mathcal{R}_0 \mathcal{M} A$}}
\put(-19,0){\vector(1,0){10}}
\put(9,0){\vector(1,0){11}}
\put(-14.5,1.25){\makebox(0,0){\small $a \mapsto \underline{a}$}}
\put(14.5,1.5){\makebox(0,0){\small $\underline{a} \mapsto \widehat{a}$}}
\end{picture}
\end{center}
\end{figure}

\noindent Combining these components results in Theorem \ref{Thm:4}, which
summarizes the development to this point.

\subsection{The representation of objects}

\begin{theorem}
\label{Thm:4}For $a\in A^{+}$, define
\[
\widehat{a}\left(  r,\infty\right)  \equiv\left\{
\begin{array}
[c]{ll}%
\left(  \bot,a\blacktriangleright r\right)  , & r\geq0\\
\left(  \top,\top\right)  , & r<0
\end{array}
\right.  .
\]
Then $\widehat{a}$ extends to a unique element $\widehat{a}\in\mathcal{R}%
_{p}\mathcal{M}A$, and the map $a\longmapsto\widehat{a}$, $a\in A^{+}$,
extends to a unique truncation isomorphism
\[
\mu_{A}:A\rightarrow\widehat{A}\equiv\left\{  \widehat{a}:a\in A\right\}
\leq\mathcal{R}_{p}\mathcal{M}A.
\]

\end{theorem}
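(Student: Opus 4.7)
The plan is to assemble the representation from ingredients already in place, rather than redoing the frame-map-from-$\mathcal{O}\mathbb{R}$ verification from scratch. The formula for $\widehat{a}$ can be rewritten as
\[
\widehat{a}(r,\infty) \;=\; \bigl(0(r,\infty),\,\underline{a}(r,\infty)\bigr),
\]
where $0:\mathcal{O}\mathbb{R}\to 2$ is the constant zero point and $\underline{a}:\mathcal{O}\mathbb{R}\to\mathcal{K}A$ is the frame map of Definition \ref{Def:4}. So $\widehat{a}$ should be nothing other than the product map $0\times\underline{a}:\mathcal{O}\mathbb{R}\to 2\times\mathcal{K}A = 2\mathcal{K}A$ obtained from the universal property of the frame product. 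Since both factors are frame maps, so is their product, and this is the unique frame extension of the given rule on basic opens.

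Next I would show that $\widehat{a}$ factors through the sub-pointed-frame $\mathcal{M}A = 2_F\mathcal{K}A \hookrightarrow 2\mathcal{K}A$. By the definition of $2_FL$, the required factorization condition is that whenever the first coordinate $0(U)$ equals $\top$, i.e.\ whenever $0\in U$, the second coordinate $\underline{a}(U)$ lies in $F$. This is precisely the content of the final lemma of Section \ref{Sec:1}, which established that $\underline{A}\subseteq \mathcal{R}_{\mathbf{fF}}\mathcal{K}A$. Then $\ast_{\mathcal{M}A}\circ\widehat{a}=0$ holds by inspection, because $\ast_{\mathcal{M}A}$ is the first projection and the first coordinate of $\widehat{a}$ is $0$ by construction; hence $\widehat{a}\in\mathcal{R}_0\mathcal{M}A$.

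For the trunc isomorphism, I would factor $\mu_A$ as the composite
\[
A \xrightarrow{\;\kappa_A\;} \underline{A} \;\longrightarrow\; \widehat{A},
\qquad \underline{a}\longmapsto\widehat{a}.
\]
The first arrow is a trunc isomorphism by Theorem \ref{Thm:6}. The second arrow is the range restriction of the correspondence in Corollary \ref{Cor:1}, which identifies $\mathcal{R}_{\mathbf{fF}}\mathcal{K}A$ with $\mathcal{R}_{0}\mathcal{M}A = \mathcal{R}_{\mathbf{pF}}\mathcal{M}A$ (via $\mathcal{M}A = 2_F\mathcal{K}A$); this identification is already a trunc isomorphism, so its restriction to $\underline{A}\to\widehat{A}$ is a trunc isomorphism onto its image. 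Composing gives the desired trunc isomorphism $\mu_A$. Uniqueness of the extension from $A^+$ to $A$ is automatic, since $\ell$-group-with-truncation morphisms are determined by their restriction to the positive cone.

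Because each step appeals to a result proved earlier, there is no single hard obstacle. The one verification that is not purely formal is that $\underline{a}(U)\in F$ whenever $0\in U$, which controls the factorization through $\mathcal{M}A$; this was handled by the final lemma of Section \ref{Sec:1} using the explicit computation $\underline{a}(-\infty,\varepsilon) = \overline{a/\varepsilon}\blacktriangleleft 1 \in F$. The remainder of the argument is bookkeeping inside the pointed/filtered-frame equivalence of Proposition \ref{Prop:1}.
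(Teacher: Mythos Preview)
Your proposal is correct and matches the paper's approach exactly: the paper presents Theorem~\ref{Thm:4} as a summary of the development up to that point, with the diagram $A\to\underline{A}\leq\mathcal{R}_F\mathcal{K}A\to\mathcal{R}_0\mathcal{M}A$ in lieu of a written proof, and your argument simply spells out that assembly---$\widehat{a}=0\times\underline{a}$ as a product of frame maps, factorization through $2_F\mathcal{K}A$ via the final lemma of Section~\ref{Sec:1}, and the trunc isomorphism as the composite of $\kappa_A$ (Theorem~\ref{Thm:6}) with the identification of Corollary~\ref{Cor:1}. There is nothing to add.
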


We turn now to the issue of functoriality. This requires a few pertinent facts
about subtruncs of $\mathcal{R}L$, for $L$ a frame.

\subsection{Cozero facts}

\emph{Throughout this subsection }$L$\emph{\ designates a frame and }%
$A$\emph{\ designates a subtrunc of }$\mathcal{R}L$. Let us recall some
standard terminology relevant to this situation. The \emph{cozero element of
}$a\in A$ is
\[
\operatorname{coz}a\equiv a\left(  \left(  -\infty,0\right)  \cup\left(
0,\infty\right)  \right)  .
\]
In similar spirit we define the \emph{co-one element of }$a$ to be
\[
\operatorname{con}a\equiv a\left(  \left(  -\infty,1\right)  \cup\left(
1,\infty\right)  \right)  .
\]
If $a\in\overline{A}$ then these expressions simplify to $\operatorname{coz}%
a=a\left(  0,\infty\right)  $ and $\operatorname{con}a=\left(  -\infty
,1\right)  $. Finally, we will frequently and without comment use the fact
that $\operatorname{con}a=\bigvee_{s<1}a\left(  s,\infty\right)  ^{\ast}$.

\begin{lemma}
For any subset $S\subseteq A$, $\bigvee_{S}\operatorname{coz}a=\bigvee
_{\left[  S\right]  }\operatorname{coz}a$.
\end{lemma}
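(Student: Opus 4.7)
The plan is to set $U \equiv \bigvee_{a\in S}\operatorname{coz}a$ and define
\[
K \equiv \{b\in A : \operatorname{coz}b \leq U\}.
\]
The inequality $\bigvee_S\operatorname{coz}a \leq \bigvee_{[S]}\operatorname{coz}a$ is trivial. For the reverse, it suffices to show that $K$ is a truncation kernel of $A$, since $S\subseteq K$ will then force $[S]\subseteq K$, giving $\operatorname{coz}b\leq U$ for every $b\in[S]$.

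Using Lemma \ref{Lem:5}, I will check that $K$ is an absorbing archimedean kernel. The convex $\ell$-subgroup axioms follow routinely from standard cozero identities (monotonicity of $\operatorname{coz}$ in $|\cdot|$, $\operatorname{coz}(a\vee b)=\operatorname{coz}a\vee\operatorname{coz}b$, and $\operatorname{coz}(a+b)\leq\operatorname{coz}a\vee\operatorname{coz}b$). Absorption is automatic here: because $A$ is a subtrunc of the $\mathbf{W}$-object $\mathcal{R}L$, the truncation is $\overline{a}=a\wedge 1$, and since $1(0,\infty)=\top$, one has $\operatorname{coz}\overline{a}=(a\wedge 1)(0,\infty)=a(0,\infty)\wedge\top=\operatorname{coz}a$.

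The substantive step is the archimedean condition of Lemma \ref{Lem:4}: given $b,c\in A^+$ with $(nb-c)^+\in K$ for every $n\in\mathbb{N}$, I must show $b(t,\infty)\leq U$ for every $t>0$; joining over $t>0$ then gives $\operatorname{coz}b\leq U$. Because $c\in\mathcal{R}L$ is a frame map, $\bigvee_{s} c(-\infty,s)=\top$, so
\[
b(t,\infty) \;=\; \bigvee_{s}\bigl(b(t,\infty)\wedge c(-\infty,s)\bigr).
\]
Given $s$, I pick $n$ large enough that $s/n<t$, making $b(t,\infty)\leq b(s/n,\infty)$. The addition formula in $\mathcal{R}L$ (see the proof of Theorem \ref{Thm:6}) together with the scaling identity $(nb)(s,\infty)=b(s/n,\infty)$ combine to yield
\[
b(s/n,\infty)\wedge c(-\infty,s) \;\leq\; (nb-c)(0,\infty) \;=\; \operatorname{coz}((nb-c)^+) \;\leq\; U,
\]
so each summand of the join bounding $b(t,\infty)$ lies below $U$. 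The main obstacle is precisely this frame-theoretic calculation; the transfinite description of $[S]$ from Lemma \ref{Lem:24} tempts one into an induction on ordinal stages, but framing the argument around $K$ sidesteps that and reduces everything to the one archimedean computation above.
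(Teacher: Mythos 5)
Your proof is correct, but it takes a genuinely different route from the paper's. The paper proves the lemma by transfinite induction on the stages $S^{\alpha}$ of the construction from Lemma \ref{Lem:24}, showing $\bigvee_{S^{\alpha}}\operatorname{coz}a=\bigvee_{S}\operatorname{coz}a$ at each stage, with the successor cases split according to whether the stage performs the archimedean closure, the absorption closure, or the convex $\ell$-subgroup closure. You instead set $U\equiv\bigvee_{S}\operatorname{coz}a$ and verify via the internal characterization of Lemma \ref{Lem:5} that $K\equiv\{b:\operatorname{coz}b\leq U\}$ is an absorbing archimedean kernel, so that $[S]\subseteq K$ by minimality; this sidesteps the ordinal bookkeeping entirely. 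The two arguments ultimately rest on the same three verifications, and your treatment of the substantive one is actually more explicit than the paper's: where the paper simply asserts $\operatorname{coz}b=\bigvee_{\mathbb{N}}\operatorname{coz}(nb-c)^{+}$, you derive the needed inequality from the frame-theoretic addition formula, choosing $U_{1}=(s,\infty)$ and $U_{2}=(-\infty,s)$ so that $U_{1}-U_{2}\subseteq(0,\infty)$ gives $b(s/n,\infty)\wedge c(-\infty,s)\leq(nb-c)(0,\infty)\leq U$, and then exhausting $b(t,\infty)$ by the join $\bigvee_{s}c(-\infty,s)=\top$. Your observation that absorption is automatic because $\operatorname{coz}\overline{a}=\operatorname{coz}a$ in a subtrunc of $\mathcal{R}L$ is exactly the paper's disposal of its $\gamma\equiv2$ case. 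What your packaging buys is a shorter, induction-free argument; what the paper's buys is uniformity with the neighboring Lemma \ref{Lem:20}, whose proof genuinely does need the stage-by-stage induction.
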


\begin{proof}
By Lemma \ref{Lem:24}, we need only show that $\bigvee_{S}\operatorname{coz}%
a=\bigvee_{S^{\alpha}}\operatorname{coz}a$ for all $\alpha$, and this we do by
induction. If $\alpha=0$ then $S^{\alpha}=\left\langle S\right\rangle $, the
convex $\ell$-subgroup of $A$ generated by $S$. Keeping in mind the facts that
$\operatorname{coz}\alpha=\operatorname{coz}\left\vert a\right\vert $, $0\leq
a\leq b$ implies $\operatorname{coz}a\leq\operatorname{coz}b$, and
$\operatorname{coz}\left\vert a\vee b\right\vert $, $\operatorname{coz}%
\left\vert a+b\right\vert \leq\operatorname{coz}\left\vert a\right\vert
\vee\operatorname{coz}\left\vert b\right\vert $, the truth of the assertion is
clear in this case. Assume now that the assertion holds for all $\gamma
<\alpha$. If $\alpha$ is a limit ordinal then the assertion clearly also holds
at $\alpha$, so assume $\alpha$ is of the form $\gamma+1$. If $\gamma
\equiv0\operatorname{mod}3$ and $b\in S^{\alpha+}$ then there is some $c\in
S^{\gamma+}$ for which $\left(  nb-c\right)  ^{+}\in S^{\gamma}$ for all $n$.
But since $\operatorname{coz}b=\bigvee_{\mathbb{N}}\operatorname{coz}\left(
nb-c\right)  ^{+}$, it follows that $\operatorname{coz}b\leq\bigvee
_{S^{\gamma}}\operatorname{coz}a$, with the result that
\[
\bigvee_{S^{\alpha}}\operatorname{coz}a=\bigvee_{S^{\gamma}}\operatorname{coz}%
a=\bigvee_{S}\operatorname{coz}a.
\]
The argument for the case in which $\gamma\equiv1\operatorname{mod}3$ is
almost identical, and, in light of the fact that $\operatorname{coz}%
\overline{a}=\operatorname{coz}a $ for $a\in A^{+}$, the argument for the case
in which $\gamma\equiv2\operatorname{mod}3$ is trivial.
\end{proof}

\begin{lemma}
\label{Lem:20}For $a,b\in A^{+}$ with $a\in\overline{A}$ and $b\in
a\blacktriangleleft1$, $\operatorname{coz}b\leq\operatorname{con}a.$ Therefore
$\operatorname{con}a\geq\bigvee_{a\blacktriangleleft1}\operatorname{coz}b$.
\end{lemma}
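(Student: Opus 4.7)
The plan is to reduce $\operatorname{coz} b \leq \operatorname{con} a$ first to the generators of $a\blacktriangleleft1$ and then to bootstrap to the whole truncation kernel via the preceding cozero lemma.

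First, I would unpack $\operatorname{con} a$ for $a\in\overline{A}$. Because $A$ sits inside the $\mathbf{W}$-object $\mathcal{R}L$, the truncation is $\overline{a}=a\wedge1$, so $a\in\overline{A}$ forces $a\leq1$, i.e.\ $a(1,\infty)=\bot$. Hence
\[
\operatorname{con}a=a(-\infty,1)\vee a(1,\infty)=a(-\infty,1)=\bigvee_{0<s<1}a(s,\infty)^{\ast},
\]
the last equality being the standard identity for frame maps $\mathcal{O}\mathbb{R}\to L$.

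Next I would verify the inequality for the generators of $a\blacktriangleleft1$. Fix $0<s<1$ and $c\in a\ominus s^{\bot}$. Since $a\in\overline{A}$, unwinding the definition $a\ominus s=s(a/s-\overline{a/s})$ in $\mathcal{R}L$ (and invoking Lemma \ref{Lem:11} to recognize $a\ominus s$ as $(a-s)^{+}$), one obtains
\[
\operatorname{coz}(a\ominus s)=(a-s)^{+}(0,\infty)=a(s,\infty).
\]
The disjointness $|c|\wedge(a\ominus s)=0$ in $A\leq\mathcal{R}L$ descends to disjointness of cozeros in $L$, giving $\operatorname{coz}c\wedge a(s,\infty)=\bot$, so $\operatorname{coz}c\leq a(s,\infty)^{\ast}\leq\operatorname{con}a$.

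Finally, put $S\equiv\bigcup_{0<s<1}a\ominus s^{\bot}$, so that $a\blacktriangleleft1=[S]$. By the preceding cozero lemma,
\[
\bigvee_{[S]}\operatorname{coz}c=\bigvee_{S}\operatorname{coz}c\leq\operatorname{con}a,
\]
with the inequality supplied term by term by the previous paragraph. In particular, for any $b\in a\blacktriangleleft1$ one has $\operatorname{coz}b\leq\operatorname{con}a$, and taking the join over $b\in a\blacktriangleleft1$ gives the displayed consequence. The main obstacle is the pointfree identification $\operatorname{coz}(a\ominus s)=a(s,\infty)$: everything else is just combining disjointness of cozeros with the cozero lemma, but that identity requires careful bookkeeping with the definition of $\ominus$ in $\mathcal{R}L$ and with the fact that $a\leq1$.
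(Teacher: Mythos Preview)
Your argument is correct, and in fact cleaner than the paper's. The paper proves the lemma by an explicit transfinite induction on the tower $K^{\alpha}$ building up $a\blacktriangleleft1$ from $K=\bigcup_{n}a\ominus(1-1/n)^{\bot}$: it handles the base case essentially as you do (disjointness of cozeros), and then separately treats the archimedean-closure step, the absorbing step, and the convex-$\ell$-subgroup step at each successor stage. You avoid all of that by invoking the preceding cozero lemma $\bigvee_{S}\operatorname{coz}=\bigvee_{[S]}\operatorname{coz}$, which already packages that transfinite induction; once you have $\operatorname{coz}c\leq\operatorname{con}a$ for $c$ in the generating set $S=\bigcup_{0<s<1}a\ominus s^{\bot}$, the lemma hands you the conclusion for all of $[S]=a\blacktriangleleft1$ at once. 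Your route therefore buys a much shorter proof at the cost of a dependence on that earlier lemma, whereas the paper's proof is self-contained at the expense of repeating an induction already done.

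One small point: citing Lemma~\ref{Lem:11} for $a\ominus s=(a-s)^{+}$ is slightly off, since that lemma concerns the specific representation $\underline{a}\in\mathcal{RK}A$ rather than an arbitrary subtrunc of $\mathcal{R}L$. The identity you need is simply that in $\mathcal{R}L$ truncation is $a\mapsto a\wedge1$, so $s\,\overline{a/s}=a\wedge s$ and hence $a\ominus s=a-a\wedge s=(a-s)^{+}$; the paper itself uses this without ceremony in its base case.
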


\begin{proof}
Abbreviate $\bigcup_{\mathbb{N}}a\ominus\left(  1-1/n\right)  ^{\bot}$ to $K$.
Since
\[
a\blacktriangleleft1\equiv\bigvee_{\mathbb{N}}\left[  a\ominus\left(
1-\frac{1}{n}\right)  \right]  ^{\ast}=\bigvee_{\mathbb{N}}a\ominus\left(
1-\frac{1}{n}\right)  ^{\bot}=\left[  K\right]  =K^{\omega_{1}}%
\]
by Propositions \ref{Prop:1} and \ref{Prop:3}, it is sufficient to demonstrate
that, for all $\alpha$, $\operatorname{coz}b\leq\operatorname{con}a\,$whenever
$b\in K^{\alpha+}$. This we do by induction on $\alpha$. If $\alpha=0$ then we
would have $b\wedge a\ominus\left(  1-1/n\right)  =0$ for some $n$, with the
result that
\begin{gather*}
\bot=\operatorname{coz}0=\operatorname{coz}\left(  b\wedge a\ominus\left(
1-\frac{1}{n}\right)  \right)  =\\
\operatorname{coz}b\wedge\operatorname{coz}\left(  a-\left(  1-\frac{1}%
{n}\right)  \right)  ^{+} =\operatorname{coz}b\wedge a\left(  1-\frac{1}%
{n},\infty\right)  .
\end{gather*}
Therefore
\[
\top=\operatorname{con}a\vee a\left(  1-\frac{1}{n},\infty\right)
\Longrightarrow\operatorname{coz}b\leq\operatorname{con}a.
\]
Now assume the assertion holds for all $\gamma$, $\gamma<\alpha<\omega_{1}$.
If $\alpha$ is a limit ordinal then the assertion holds also at $\alpha$, so
assume $\alpha=\gamma+1$ for some $\gamma$. If $\gamma\equiv
0\operatorname{mod}3$ then there is some $c\in K^{\gamma+}$ such that $\left(
nb-c\right)  ^{+}\in K^{\gamma}$ for all $n$. By the inductive hypothesis we
have $\operatorname{coz}\left(  nb-c\right)  ^{+}\leq\operatorname{con}a$ for
all $n$. Now
\[
\operatorname{coz}\left(  nb-c\right)  ^{+}=\left(  nb-c\right)  \left(
0,\infty\right)  =\bigvee_{nU_{1}-U_{2}\subseteq\left(  0,\infty\right)
}\left(  b\left(  U_{1}\right)  \wedge c\left(  U_{2}\right)  \right)
\]
But if open subsets $U_{i}\subseteq\mathbb{R}$ satisfy $nU_{1}-U_{2}%
\subseteq\left(  0,\infty\right)  $ then $U_{1}$ must be bounded below, say by
$t_{1}$, and $U_{2}$ must be bounded above, say by $t_{2}$, where
$nt_{1}-t_{2}>0$. That is to say that, in the last supremum displayed above,
$U_{1}$ and $U_{2}$ may be replaced by $\left(  t,\infty\right)  $ and
$\left(  -\infty,nt\right)  $ for some $t\in\mathbb{R}$. This gives%
\begin{gather*}
\operatorname{con}a \geq\bigvee_{n}\bigvee_{t}\left(  b\left(  t,\infty
\right)  \wedge c\left(  -\infty,nt\right)  \right)  =\\
\bigvee_{t}\bigvee_{n}\left(  b\left(  t,\infty\right)  \wedge c\left(
-\infty,nt\right)  \right)  =\bigvee_{t}\left(  b\left(  t,\infty\right)
\wedge\bigvee_{n}c\left(  -\infty,nt\right)  \right)  .
\end{gather*}
But $\bigvee_{n}c\left(  -\infty,nt\right)  =\bot$ for $t\leq0$ since $c\geq0$
and $0\left(  -\infty,0\right)  =\bot$, while $\bigvee_{n}c\left(
-\infty,nt\right)  =\top$ for $t>0$. Therefore the last expression displayed
above works out to $\bigvee_{t>0}b\left(  t,\infty\right)  =b\left(
0,\infty\right)  =\operatorname{coz}b$, as desired.

Consider next the case in which $\gamma\equiv1\operatorname{mod}3$. By the
inductive hypothesis we have, for each $n\in\mathbb{N}$,
\begin{align*}
\operatorname{con}a  &  \geq\operatorname{coz}nb\ominus1=\operatorname{coz}%
\left(  nb-1\right)  ^{+}=\left(  nb-1\right)  \left(  0,\infty\right) \\
&  =nb\left(  1,\infty\right)  =b\left(  \frac{1}{n},\infty\right)  ,
\end{align*}
with the result that $b\left(  0,\infty\right)  =\bigvee_{n}b\left(
1/n,\infty\right)  \leq\operatorname{con}a$. In the last case $\gamma
\equiv2\operatorname{mod}3$, and $\operatorname{coz}\overline{b}%
\leq\operatorname{con}a$ by the inductive hypothesis. But
\[
\operatorname{coz}\overline{b}=\left(  b\wedge1\right)  \left(  0,\infty
\right)  =b\left(  0,\infty\right)  =\operatorname{coz}b,
\]
and so the proof is complete.
\end{proof}

\begin{corollary}
For any $K\in\mathcal{K}A$, $\bigvee_{^{1}K}\operatorname{con}a\geq
\bigvee_{^{0}K}\operatorname{coz}b=\bigvee_{K}\operatorname{coz}b$.
\end{corollary}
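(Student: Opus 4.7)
The plan is to dispatch the equality and the inequality separately. The equality is essentially bookkeeping; the inequality is essentially Lemma \ref{Lem:20} applied once the corollary immediately preceding ours has supplied the right witness.

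For the equality $\bigvee_{{}^0 K}\operatorname{coz}b = \bigvee_K \operatorname{coz}b$, I would invoke the identification ${}^0 K = \overline{K} = \{\overline{a}:a\in K^+\}$ recorded in the paragraph just before Corollary \ref{Cor:3}, together with the cozero/truncation identity $\operatorname{coz}\overline{c} = \operatorname{coz}c$ for $c\in A^+$. That identity is precisely the step $(c\wedge 1)(0,\infty) = c(0,\infty)$ already carried out inside the proof of Lemma \ref{Lem:20}. The inclusion ${}^0 K \subseteq K$ then supplies the $\leq$ direction for free; for the reverse, each $b\in K$ is matched by $\overline{|b|} \in {}^0 K$ with the same cozero, so $\operatorname{coz}b = \operatorname{coz}|b| = \operatorname{coz}\overline{|b|}$.

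For the inequality $\bigvee_{{}^1 K}\operatorname{con}a \geq \bigvee_{{}^0 K}\operatorname{coz}b$, I would fix $b\in {}^0 K$ and aim to show $\operatorname{coz}b \leq \bigvee_{{}^1 K}\operatorname{con}a$. The corollary immediately preceding ours, in the substantive case where ${}^1 K$ is nonempty, writes $K = \bigcup_{{}^1 K} a\blacktriangleleft 1$; therefore some $a_0 \in {}^1 K$ satisfies $b \in a_0\blacktriangleleft 1$. Since $b,a_0 \in \overline{A}$, Lemma \ref{Lem:20} then yields $\operatorname{coz}b \leq \operatorname{con}a_0 \leq \bigvee_{{}^1 K}\operatorname{con}a$, and joining over $b$ gives the claim. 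The trivial case $K=\bot$, where ${}^0 K = \{0\}$ and both sides collapse to $\bot$, handles the remainder.

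Since Lemma \ref{Lem:20} together with the previous corollary already perform the hard work, the main obstacle is cosmetic: confirming that the witness $a_0$ extracted for a given $b$ really lies in ${}^1 K$ (not merely in $\overline{A}$), which is exactly what the previous corollary guarantees, and verifying that the cozero/truncation identity transfers the passage from $K$ to ${}^0 K$ without loss.
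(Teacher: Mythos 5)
Your proposal follows essentially the same route as the paper: both halves come down to Lemma \ref{Lem:20} together with the identity $K=\bigcup_{{}^{1}K}a\blacktriangleleft 1$ supplied by the corollary immediately preceding this one. The paper simply runs the argument as a single join computation,
$\bigvee_{{}^{1}K}\operatorname{con}a\geq\bigvee_{a\in{}^{1}K}\bigvee_{b\in a\blacktriangleleft 1}\operatorname{coz}b=\bigvee_{K}\operatorname{coz}b$,
where you extract a witness $a_{0}$ for each fixed $b$; the difference is organizational, not mathematical. Your verification of the equality $\bigvee_{{}^{0}K}\operatorname{coz}b=\bigvee_{K}\operatorname{coz}b$ via ${}^{0}K=\overline{K}$ and $\operatorname{coz}\overline{c}=\operatorname{coz}c$ is exactly what the paper leaves implicit.

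One caveat concerns your closing case analysis. The case not covered by the preceding corollary is not ``$K=\bot$'' but ``${}^{1}K=\emptyset$,'' i.e.\ the case in which $K$ contains no kernel of the form $a\blacktriangleleft 1$: that corollary is an ``iff'' whose hypothesis can fail for $K\neq\bot$. (It already fails for $K=\bot$ whenever $A$ is not unital, by Proposition \ref{Prop:6}, and in a non-unital trunc it can fail for nonzero $K$ as well.) When ${}^{1}K=\emptyset$ but ${}^{0}K\neq\{0\}$, the left-hand side is the empty join $\bot$ while the right-hand side is not, so the asserted inequality is genuinely in doubt there --- your dismissal of ``the remainder'' as the single trivial instance $K=\bot$ is therefore not accurate. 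To be fair, the paper's own proof invokes $K=\bigcup_{{}^{1}K}a\blacktriangleleft 1$ under exactly the same silent assumption, so you have reproduced the paper's argument faithfully, gap included; but since you chose to make the case split explicit, you should identify the residual case correctly rather than conflate it with $K=\bot$.
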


\begin{proof}
We have%
\[
\bigvee_{^{1}K}\operatorname{con}a\geq\bigvee_{a\in{}^{1}K}\bigvee_{b\in
a\blacktriangleleft1}\operatorname{coz}b=\bigvee_{K}\operatorname{coz}b.
\]
The equality holds because $a\in{}^{1}K$ means that $a\blacktriangleleft
1\subseteq K$, and because $K=\bigcup_{^{1}K}a\blacktriangleleft1$.
\end{proof}

\begin{lemma}
\label{Lem:21}For $a,b\in\overline{A}$ with $a\in\overline{A}$,
$\operatorname{coz}b\wedge\operatorname{con}a\leq\bigvee_{a\blacktriangleleft
1}\operatorname{coz}c$.
\end{lemma}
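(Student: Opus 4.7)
The plan is to decompose $\operatorname{con} a = \bigvee_{s<1} a(s,\infty)^\ast$ (per the formula noted at the start of this subsection) and reduce to an $s$-by-$s$ check. Since $\operatorname{coz} b \wedge \operatorname{con} a = \bigvee_{s<1}\bigl(b(0,\infty) \wedge a(s,\infty)^\ast\bigr)$, it will suffice to produce, for each rational $s \in [0,1)$, a single $c \in a \blacktriangleleft 1$ satisfying $\operatorname{coz} c \geq b(0,\infty) \wedge a(s,\infty)^\ast$. Fixing such an $s$, I will choose rational $r \in (s,1)$ and $n \in \mathbb{N}$ with $r' := r + 1/n < 1$, and propose the explicit witness $e \equiv n(a \ominus r)$, $c \equiv (b - e)^+$. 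Two things then need to be established: (i) $c \in a \blacktriangleleft 1$, and (ii) $\operatorname{coz} c \geq b(0,\infty) \wedge a(s,\infty)^\ast$.

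For (i) I will show $c \wedge (a \ominus r') = 0$, whence $c \in (a \ominus r')^\bot \subseteq a \blacktriangleleft 1$ (the last inclusion uses $r' < 1$). Via the rescaling identity $a \ominus r' = a \ominus r \ominus (1/n) = (1/n)(e \ominus 1)$, the disjointness reduces to the purely $\ell$-group statement $(b-e)^+ \wedge (e \ominus 1) = 0$. The pointwise picture is that wherever $e > 1$ the hypothesis $b \leq 1$ forces $b - e \leq 0$; algebraically, $b \leq 1$ gives $b \wedge e \leq 1 \wedge e = \bar{e}$, so $e \ominus 1 = e - \bar{e} \leq e - b \wedge e = (e - b)^+$, and the latter is disjoint from $(b - e)^+$.

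For (ii) I will expand $\operatorname{coz} c = (b - e)(0,\infty) = \bigvee_{t>0}\bigl(b(t,\infty) \wedge e(-\infty,t)\bigr)$ using the standard frame-theoretic addition formula in $\mathcal{R}L$ together with $e \geq 0$. Scaling gives $e(-\infty, t) = (a \ominus r)(-\infty, t/n)$, and from $(-\infty, t/n) \cup (0, \infty) = \mathbb{R}$ for $t > 0$ together with $(a \ominus r) \geq 0$ I obtain $(a \ominus r)(-\infty, t/n) \geq (a \ominus r)(0,\infty)^\ast = a(r,\infty)^\ast \geq a(s,\infty)^\ast$. Joining over $t > 0$ and using $\bigvee_{t>0} b(t,\infty) = b(0,\infty)$ yields (ii). The main obstacle is the algebraic disjointness argument in (i); once that is settled, (ii) is a routine chase through the formulas for sum and cozero.
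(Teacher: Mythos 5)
Your proof is correct and follows essentially the same route as the paper: the same decomposition $\operatorname{con}a=\bigvee_{s<1}a(s,\infty)^{\ast}$ and essentially the same witness, since your $c=(b-n(a\ominus r))^{+}$ is the paper's $c=(b-a_{1})^{+}$ with $a_{1}=na\ominus(n-1)=n(a\ominus(1-\frac1n))$. The only differences are in execution: the paper first normalizes $b$ to $\frac12\overline{2b}$ and gets membership in $a\blacktriangleleft1$ from $(a_{1}-b)^{+}\geq a_{1}\ominus\frac12$, where you invoke ($\mathfrak{T}1$) directly via $b\wedge e\leq\overline{e}$; and for the cozero estimate the paper uses the disjointification inequality $2(a_{1}\vee c)\geq b$ where you compute $\operatorname{coz}(b-e)^{+}$ from the frame addition formula --- both are sound.
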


\begin{proof}
Because
\begin{gather*}
\operatorname{coz}b =\operatorname{coz}b\wedge\top=\operatorname{coz}%
b\wedge\operatorname{coz}\frac{1}{2} =\operatorname{coz}\left(  b\wedge
\frac{1}{2}\right)  =\operatorname{coz}\left(  \frac{1}{2}\overline
{2b}\right)  ,
\end{gather*}
we may assume that $b=\left(  1/2\right)  \overline{2b}$. Since
$\operatorname{con}a=\bigvee_{\mathbb{N}}a\left(  1-1/n,\infty\right)  ^{\ast
}$, it is sufficient to show that for each $n\in\mathbb{N}$ there exists $c\in
a\blacktriangleleft1^{+}$ such that
\[
\operatorname{coz}b\wedge a\left(  1-1/n,\infty\right)  ^{\ast}\leq
\operatorname{coz}c.
\]
Fix $n$, and put $a_{1}\equiv na\ominus\left(  n-1\right)  $. Note that
$a_{1}\in\overline{A}$ because
\[
a\ominus1=na\ominus\left(  n-1\right)  \ominus1=na\ominus na=n\left(
a\ominus1\right)  =0.
\]
Let $c\equiv\left(  b-a_{1}\right)  ^{+}$, and observe that
\begin{gather*}
c\wedge\left(  a_{1}-b\right)  ^{+} =\left(  b-a_{1}\right)  ^{+}\wedge\left(
a_{1}-b\right)  ^{+}=0,\tag*{$\ast$}%
\end{gather*}
and $2\left(  a_{1}\vee c\right)  \geq b$. By Corollary \ref{Cor:2}(3),
\begin{gather*}
\left(  a_{1}-b\right)  ^{+} \geq a_{1}\ominus\frac{1}{2}= na\ominus\left(
n-1\right)  \ominus\frac{1}{2}=\\
na\ominus\left(  na-\frac{1}{2}\right)  =n\left(  a\ominus\left(  1-\frac
{1}{2n}\right)  \right)  .
\end{gather*}
Combined with ($\ast$), this yields $c\wedge n\left(  a\ominus\left(
1-\frac{1}{2n}\right)  \right)  =0$, hence $c\wedge a\ominus\left(  1-\frac
{1}{2n}\right)  =0$, i.e., $c\in a\ominus\left(  1-1/2\right)  ^{\bot
}\subseteq a\blacktriangleleft1.$ With the aid of Lemma \ref{Lem:11} we now
get%
\begin{gather*}
\operatorname{coz}a_{1} =\operatorname{coz}na\ominus\left(  n-1\right)
=\operatorname{coz}\left(  na-\left(  n-1\right)  \right)  ^{+} =\left(
na\right)  \left(  n-1,\infty\right)  =a\left(  1-\frac{1}{n},\infty\right)
\end{gather*}
From the inequality in ($\ast$) comes the information that
\begin{gather*}
\operatorname{coz}b \leq\operatorname{coz}2\left(  a_{1}\vee c\right)
=\operatorname{coz}\left(  a_{1}\vee c\right) \\
=\operatorname{coz}a_{1}\vee\operatorname{coz}c=a\left(  1-\frac{1}{n}%
,\infty\right)  \vee\operatorname{coz}c.
\end{gather*}
If we now meet both sides with $a\left(  1-1/n\right)  ^{\ast}$ we get
\[
\operatorname{coz}b\wedge a\left(  1-\frac{1}{n}\right)  ^{\ast}%
=\operatorname{coz}c\wedge a\left(  1-1/n\right)  ^{\ast}\leq
\operatorname{coz}c. \qedhere
\]
\end{proof}

\begin{proposition}
\label{Prop:4}For $a,b\in A^{+}$ with $a\in\overline{A}$,
\[
\operatorname{coz}b\wedge\operatorname{con}a=\operatorname{coz}b\wedge
\bigvee_{a\blacktriangleleft1}\operatorname{coz}c=\bigvee_{c\in
a\blacktriangleleft1}\operatorname{coz}\left(  b\wedge c\right)  .
\]

\end{proposition}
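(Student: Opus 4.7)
The plan is to prove the two equalities separately. The first equality reduces, after meeting with $\operatorname{coz} b$, to showing $\operatorname{con} a$ and $\bigvee_{a\blacktriangleleft 1}\operatorname{coz} c$ agree below $\operatorname{coz} b$, and this is exactly what Lemmas \ref{Lem:20} and \ref{Lem:21} were built to give. The second equality is essentially frame distributivity together with the identity $\operatorname{coz}(b\wedge c)=\operatorname{coz} b\wedge\operatorname{coz} c$ for $b,c\geq 0$.

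For the first equality, the $\geq$ direction is immediate from Lemma \ref{Lem:20}: each $\operatorname{coz} c$ with $c\in a\blacktriangleleft 1$ lies below $\operatorname{con} a$, so $\bigvee_{a\blacktriangleleft 1}\operatorname{coz} c\leq\operatorname{con} a$ and the inequality follows by meeting with $\operatorname{coz} b$. For the $\leq$ direction, the subtlety is that Lemma \ref{Lem:21} is stated only for $b\in\overline{A}$, whereas here $b$ is merely in $A^+$. I would circumvent this by replacing $b$ with $\overline{b}$: since $A$ is a subtrunc of $\mathcal{R}L$, the truncation is inherited from $\mathcal{R}L$, where $\overline{b}=b\wedge 1$, hence
\[
\operatorname{coz}\overline{b}=(b\wedge 1)(0,\infty)=b(0,\infty)\wedge 1(0,\infty)=b(0,\infty)=\operatorname{coz} b.
\]
Now $\overline{b}\in\overline{A}$, so Lemma \ref{Lem:21} applies and gives $\operatorname{coz} b\wedge\operatorname{con} a=\operatorname{coz}\overline{b}\wedge\operatorname{con} a\leq\bigvee_{a\blacktriangleleft 1}\operatorname{coz} c$. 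Meeting once more with $\operatorname{coz} b$ yields the desired inequality.

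For the second equality, I would first observe that the supremum $\bigvee_{c\in a\blacktriangleleft 1}\operatorname{coz}(b\wedge c)$ may be taken over non-negative $c$ without loss: since $a\blacktriangleleft 1$ is a convex $\ell$-subgroup, $|c|$ belongs to it whenever $c$ does, and for $b\geq 0$ one has $b\wedge c\leq b\wedge |c|$, whence $\operatorname{coz}(b\wedge c)\leq\operatorname{coz}(b\wedge |c|)$. For such non-negative $c$,
\[
\operatorname{coz}(b\wedge c)=(b\wedge c)(0,\infty)=b(0,\infty)\wedge c(0,\infty)=\operatorname{coz} b\wedge\operatorname{coz} c,
\]
so frame distributivity gives
\[
\operatorname{coz} b\wedge\bigvee_{c\in a\blacktriangleleft 1}\operatorname{coz} c=\bigvee_{c\in a\blacktriangleleft 1}\bigl(\operatorname{coz} b\wedge\operatorname{coz} c\bigr)=\bigvee_{c\in a\blacktriangleleft 1}\operatorname{coz}(b\wedge c).
\]

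The only real obstacle is the mismatch of hypotheses between Lemma \ref{Lem:21} (which demands $b\in\overline{A}$) and the present statement (which allows arbitrary $b\in A^+$); the $\operatorname{coz}\overline{b}=\operatorname{coz} b$ observation resolves this. Everything else is either routine cozero calculation or a direct appeal to the machinery already in place.
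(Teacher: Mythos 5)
Your proof is correct and follows essentially the same route as the paper's: the first equality is exactly the combination of Lemma \ref{Lem:20} (for $\geq$) and Lemma \ref{Lem:21} (for $\leq$) that the paper cites, and the second equality is the routine distributivity-plus-$\operatorname{coz}(b\wedge c)=\operatorname{coz}b\wedge\operatorname{coz}c$ step that the paper leaves implicit. Your patch for the hypothesis mismatch --- Lemma \ref{Lem:21} is stated for $b\in\overline{A}$ while the proposition allows $b\in A^{+}$, repaired via $\operatorname{coz}\overline{b}=\operatorname{coz}b$ --- is legitimate and correct (the paper glosses over this; its own proof of Lemma \ref{Lem:21} in fact begins with a similar reduction). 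One micro-step in your write-up is invalid as stated: from $b\wedge c\leq b\wedge\left\vert c\right\vert$ one cannot infer $\operatorname{coz}(b\wedge c)\leq\operatorname{coz}(b\wedge\left\vert c\right\vert)$, since $\operatorname{coz}$ is monotone only in absolute value (consider $b=0$ and $c$ negative, where the left side can be nonzero while the right side is $\bot$); but this step is only there to justify restricting the join to nonnegative $c$, which is the intended reading of the notation in any case (the proposition would be false if negative $c$ were admitted), so nothing essential is affected.
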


\begin{proof}
According to Lemma \ref{Lem:20}, $\operatorname{con}a\geq\bigvee
_{a\blacktriangleleft1}\operatorname{coz}c$. And according to Lemma
\ref{Lem:21}, $\operatorname{coz}b\wedge\operatorname{con}a\leq\bigvee
_{a\blacktriangleleft1}\operatorname{coz}c.$ Together, these two facts imply
the equality asserted in the proposition.
\end{proof}

\begin{proposition}
\label{Prop:5}For $a,b\in\overline{A}$, $\operatorname{con}a\vee
\bigvee_{b\blacktriangleleft1}\operatorname{coz}c\geq\operatorname{con}b$.
\end{proposition}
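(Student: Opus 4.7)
The plan is to bound $\operatorname{con} b$ from above by $\operatorname{con} a \vee \bigvee_{c \in b\blacktriangleleft 1} \operatorname{coz} c$ by first producing a cover of $\top$ out of $a$, and then invoking Proposition \ref{Prop:4} with the roles of $a$ and $b$ interchanged to handle one of the resulting pieces.

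First I would observe that, because $a \in \overline{A}$ is nonnegative and satisfies $a \leq 1$ in $\mathcal{R}L$, we have $a(-\infty,0) = \bot$ and $a(1,\infty) = \bot$, so $\operatorname{coz} a = a(0,\infty)$ and $\operatorname{con} a = a(-\infty,1)$. Since the open sets $(0,\infty)$ and $(-\infty,1)$ cover $\mathbb{R}$, it follows that
\[
\operatorname{coz} a \vee \operatorname{con} a = a(\mathbb{R}) = \top
\]
in $L$.

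Next, meeting this identity with $\operatorname{con} b$ and distributing gives
\[
\operatorname{con} b = (\operatorname{con} b \wedge \operatorname{coz} a) \vee (\operatorname{con} b \wedge \operatorname{con} a).
\]
The second summand is bounded by $\operatorname{con} a$, which is already present on the right-hand side of the desired inequality. For the first summand, I would apply Proposition \ref{Prop:4} with the roles of $a$ and $b$ interchanged (which is legitimate because $b \in \overline{A}$) to get
\[
\operatorname{coz} a \wedge \operatorname{con} b = \bigvee_{c \in b\blacktriangleleft 1} \operatorname{coz}(a \wedge c) \leq \bigvee_{c \in b\blacktriangleleft 1} \operatorname{coz} c.
\]
Combining the two bounds delivers the claim.

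There is no real obstacle here; the only insight required is recognizing that the cover $\operatorname{coz} a \vee \operatorname{con} a = \top$ is available for any $a \in \overline{A}$, and that Proposition \ref{Prop:4}, read with its two arguments swapped, delivers precisely the piece $\operatorname{coz} a \wedge \operatorname{con} b$ as a join of cozero elements indexed by $b \blacktriangleleft 1$.
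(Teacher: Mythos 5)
Your proof is correct, and it takes a genuinely different route from the paper's. Both arguments begin from the same cover $\operatorname{coz}a\vee\operatorname{con}a=a(0,\infty)\vee a(-\infty,1)=\top$, valid because $0\leq a\leq1$ for $a\in\overline{A}$. But from there the paper does not invoke Proposition \ref{Prop:4} at all: it instead uses $b\blacktriangleleft1\vee b\blacktriangleright r=\top$ (Lemma \ref{Lem:22}) together with the cozero-join lemma to show that $\operatorname{coz}(b\ominus r)\vee\bigvee_{b\blacktriangleleft1}\operatorname{coz}c\geq\operatorname{coz}a$, joins in $\operatorname{con}a$ to reach $\top$, passes to the pseudocomplement to conclude $\operatorname{con}a\vee\bigvee_{b\blacktriangleleft1}\operatorname{coz}c\geq b(r,\infty)^{\ast}$ for each $r<1$, and finally takes the join over $r$ using $\operatorname{con}b=\bigvee_{r<1}b(r,\infty)^{\ast}$. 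You short-circuit all of this: distributing $\operatorname{con}b$ over the cover and applying Proposition \ref{Prop:4} with the roles of $a$ and $b$ interchanged (legitimate, since the proposition only requires the first argument to lie in $A^{+}$ and the second in $\overline{A}$) handles the piece $\operatorname{con}b\wedge\operatorname{coz}a$ in one line, and in fact you only need the middle term of that proposition's chain of equalities. Your version is shorter and makes visible a dependency the paper leaves unused -- Proposition \ref{Prop:4} is stated immediately before Proposition \ref{Prop:5} but is only exploited later, in the meet-preservation lemma -- while the paper's version is self-contained modulo the cozero-join lemma and avoids leaning on the harder Lemma \ref{Lem:21} that underlies Proposition \ref{Prop:4}.
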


\begin{proof}
We begin with the observation that, by Lemma \ref{Lem:22},
$b\blacktriangleleft1\vee b\blacktriangleright r=\top$ in $\mathcal{K}A$ for
$r<1$. Therefore, if we let $I\equiv b\blacktriangleleft1\cup\left\{  b\ominus
r\right\}  $, we can say that
\[
\operatorname{coz}b\ominus r\vee\bigvee_{b\blacktriangleleft1}%
\operatorname{coz}c=\bigvee_{C}\operatorname{coz}c=\bigvee_{\left[  C\right]
}\operatorname{coz}c=\bigvee_{A}\operatorname{coz}c.
\]
Consequently $\operatorname{coz}b\ominus r\vee\bigvee_{b\blacktriangleleft
1}\operatorname{coz}c\geq\operatorname{coz}a$, with the result that
\begin{gather*}
\operatorname{con}a\vee\operatorname{coz}b\ominus r\vee\bigvee
_{b\blacktriangleleft1}\operatorname{coz}c \geq\operatorname{coz}%
a\vee\operatorname{con} a =a\left(  0,\infty\right)  \vee a\left(
-\infty,1\right)  =\top.
\end{gather*}
But this implies that
\[
\operatorname{con}a\vee\bigvee_{b\blacktriangleleft1}\operatorname{coz}%
c\geq\left(  \operatorname{coz}b\ominus r\right)  ^{\ast}=b\left(
r,\infty\right)  ^{\ast}%
\]
for $r<1$, and, since $\operatorname{con}b=\bigvee_{r<1}b\left(
r,\infty\right)  ^{\ast}$, this proves the proposition.
\end{proof}

\subsection{The representation of morphisms}

We show that $\mathcal{R}_{p}$ is adjoint, which is to say that $\left(
\mu_{A},\mathcal{M}A\right)  $ is an $\mathcal{R}_{p}$-universal arrow with
domain $A$.

\begin{theorem}
\label{Thm:3} For any trunc morphism $\theta:A\rightarrow\mathcal{R}_{p}L$
there is a unique pointed frame morphism $g$ such that $\mathcal{R}_{p}%
g\circ\mu_{A}=\theta$. \begin{figure}[tbh]
\setlength{\unitlength}{4pt}
\par
\begin{center}
\begin{picture}(48,13)(0,1)
\small
\put(0,12){\makebox(0,0){$A$}}
\put(12,0){\makebox(0,0){$\mathcal{R}_0 L$}}
\put(12,12){\makebox(0,0){$\mathcal{R}_0\mathcal{M}A$}}
\put(24,12){\makebox(0,0){$\mathcal{M}A$}}
\put(24,0){\makebox(0,0){$L$}}
\put(48,6){\makebox(0,0){$\mathcal{O}_0\mathbb{R}$}}
\put(36,6){\makebox(0,0){$2$}}
\put(2,12){\vector(1,0){4.5}}
\put(2,10){\vector(1,-1){8}}
\put(12,10){\vector(0,-1){8}}
\put(24,10){\vector(0,-1){8}}
\put(28,10){\vector(2,-1){6}}
\put(28,2){\vector(2,1){6}}
\put(48,8){\line(0,1){4}}
\put(48,4){\line(0,-1){4}}
\put(48,12){\vector(-1,0){20.5}}
\put(48,0){\vector(-1,0){22}}
\put(44,6){\vector(-1,0){6}}
\put(15,6){\makebox(0,0){$\mathcal{R}_0g$}}
\put(22.5,6){\makebox(0,0){$g$}}
\put(4,14){\makebox(0,0){$\mu_A$}}
\put(3.5,5){\makebox(0,0){$\theta$}}
\put(36,14){\makebox(0,0){$\widehat{a}$}}
\put(36,-2.5){\makebox(0,0){$\theta (a)$}}
\end{picture}
\end{center}
\end{figure}
\end{theorem}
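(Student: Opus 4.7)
The strategy is to construct $g$ by formulas forced by the required commutativity, then verify the frame, pointed, and universality properties. Recall $\mathcal{M}A = 2_{F} \mathcal{K}A$, and a direct calculation using $\widehat{a}(U) = (\ast_{\mathcal{O}_{0} \mathbb{R}}(U), \underline{a}(U))$ together with Lemma \ref{Lem:18} yields $\widehat{a}(r, \infty) = (\bot, [a \ominus r])$ for $r \geq 0$ and $\widehat{a}(-\infty, r) = (\top, a \blacktriangleleft r)$ for $r > 0$. The identity $\mathcal{R}_{0} g \circ \mu_{A} = \theta$ therefore forces $g(\bot, [a]) = \operatorname{coz} \theta(a)$ for $a \in A^{+}$ and $g(\top, a \blacktriangleleft 1) = \operatorname{con} \theta(a)$ for $a \in \overline{A}$. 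Guided by this, I would define
\[
g(\bot, K) \equiv h(K) \equiv \bigvee_{a \in K^{+}} \operatorname{coz} \theta(a), \qquad g(\top, K) \equiv h(K) \vee \bigvee_{b \in {}^{1} K} \operatorname{con} \theta(b),
\]
where ${}^{1} K \equiv \{b \in \overline{A} : b \blacktriangleleft 1 \subseteq K\}$; the second expression is well-defined precisely on the $\top$-part of $\mathcal{M}A$, since ${}^{1} K$ is nonempty iff $K \in F$.

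That $h$ is a frame map $\mathcal{K}A \to {\downarrow} p_{L}$ follows from the cozero lemma $\bigvee_{S} \operatorname{coz} = \bigvee_{[S]} \operatorname{coz}$ applied to the subtrunc $\theta(A) \subseteq \mathcal{R}L$, combined with Proposition \ref{Prop:3} (which identifies joins and meets of principal truncation kernels with those of their generators); and each $\operatorname{coz} \theta(a) \leq p_{L}$ because $\theta(a) \in \mathcal{R}_{0} L$ vanishes at the designated point. The pointed property $\ast_{L} \circ g = \ast_{\mathcal{M}A}$ is then automatic: $g(\bot, K) \leq p_{L}$, while $g(\top, K) \vee p_{L} \geq \operatorname{con} \theta(b) \vee p_{L} = \top$ for any $b \in {}^{1} K$. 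The main obstacle will be verifying that $g$ preserves meets across the mixed case. Expanding
\[
g(\top, K_{1}) \wedge g(\bot, K_{2}) = (h(K_{1}) \wedge h(K_{2})) \vee \bigvee_{b \in {}^{1} K_{1},\, c \in K_{2}^{+}} \bigl( \operatorname{con} \theta(b) \wedge \operatorname{coz} \theta(c) \bigr),
\]
Proposition \ref{Prop:4} rewrites each cross term as $\bigvee_{d \in b \blacktriangleleft 1} \operatorname{coz} \theta(c \wedge d)$; since $b \blacktriangleleft 1 \subseteq K_{1}$ and $c \in K_{2}$, the element $c \wedge d$ lies in $K_{1} \cap K_{2}$, so this contribution is absorbed into $h(K_{1} \cap K_{2}) = g(\bot, K_{1} \cap K_{2})$. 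The reverse inclusion uses Proposition \ref{Prop:5} to bound $\operatorname{con} \theta$ of different elements of ${}^{1} K_{1}$ against a fixed one modulo a cozero join. Join preservation is routine from the join formulas in $2_{F} \mathcal{K}A$ and Corollary \ref{Cor:3}.

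Once $g$ is established as a pointed frame morphism, the identity $g \circ \widehat{a} = \theta(a)$ is verified on the generators $(r, \infty)$ and $(-\infty, r)$ of $\mathcal{O}\mathbb{R}$. For $r \geq 0$, $g(\bot, [a \ominus r]) = h([a \ominus r]) = \operatorname{coz} \theta(a \ominus r) = \theta(a)(r, \infty)$, using that $\theta$ commutes with diminution (Lemma \ref{Lem:11} pushed through $\theta$). For $r > 0$, Lemma \ref{Lem:15} identifies $a \blacktriangleleft r$ with $\overline{a/r} \blacktriangleleft 1$, and $g(\top, \overline{a/r} \blacktriangleleft 1) = \operatorname{con} \theta(\overline{a/r}) \vee h(\overline{a/r} \blacktriangleleft 1)$ collapses to $\operatorname{con} \theta(\overline{a/r}) = \theta(a)(-\infty, r)$ via Lemma \ref{Lem:20}, which places the $h$-summand below the $\operatorname{con}$-summand. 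Uniqueness is immediate, since $\{\widehat{a}(U) : a \in A,\ U \in \mathcal{O}\mathbb{R}\}$ join-generates $\mathcal{M}A$, so any pointed frame morphism satisfying the required identity must coincide with $g$.
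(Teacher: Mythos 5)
Your construction is essentially the paper's own proof: the same forced definition of $g$ on the two sorts of generators ($\operatorname{coz}\theta$ over $K$ for the $\bot$-coordinate, $\operatorname{con}\theta$ over ${}^{1}K$ for the $\top$-coordinate), verified by the same tools (Propositions \ref{Prop:4} and \ref{Prop:5}, the cozero lemma, Lemma \ref{Lem:20}, Lemma \ref{Lem:23}), and your extra summand $h(K)$ in $g(\top,K)$ is redundant by Lemma \ref{Lem:20}, so the two definitions coincide on $2_{F}\mathcal{K}A$. The one small misattribution is that Proposition \ref{Prop:5} is really what rescues the preservation of $\top$-joins and the mixed binary join (the paper's Lemmas \ref{Lem:25} and \ref{Lem:26}), not the mixed meet, where the reverse inequality is immediate from $g(\top,K_{1})\geq h(K_{1})$; but the ``bound $\operatorname{con}\theta$ against a fixed one modulo a cozero join'' idea you describe is exactly the needed argument, just relocated.
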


\begin{proof}
Observe that $g\left(  \bot,a\blacktriangleright0\right)  =g\widehat{a}\left(
0,\infty\right)  =\theta\left(  a\right)  \left(  0,\infty\right)
=\operatorname{coz}\theta\left(  a\right)  $ for any $a\in\overline{A}$, from
which it follows that
\begin{gather*}
g\left(  \bot,K\right)  = g\left(  \bot,\bigvee_{K^{0}}a\blacktriangleright
0\right)  = g\left(  \bigvee_{K^{0}}\left(  \bot,a\blacktriangleleft0\right)
\right)  =\\
\bigvee_{K^{0}}g\left(  \bot,a\blacktriangleleft0\right)  = \bigvee_{K^{0}%
}\theta\left(  a\right)  \left(  0,\infty\right)  = \bigvee_{K^{0}%
}\operatorname{coz}\theta\left(  a\right)  .
\end{gather*}
Likewise $g\left(  \top,a\blacktriangleleft1\right)  =g\widehat{a}\left(
-\infty,1\right)  =\theta\left(  a\right)  \left(  -\infty,1\right)
=\operatorname{con}\theta\left(  a\right)  $ for any $a\in\overline{A}$, from
which it follows that
\begin{gather*}
g\left(  \top,K\right)  = g\left(  \top,\bigvee_{K^{1}}a\blacktriangleleft
1\right)  = g\left(  \bigvee_{K^{1}}\left(  \top,a\blacktriangleleft1\right)
\right)  =\\
\bigvee_{K^{1}}g\left(  \top,a\blacktriangleleft1\right)  = \bigvee_{K^{1}%
}\theta\left(  a\right)  \left(  -\infty,1\right)  = \bigvee_{K^{1}%
}\operatorname{con}\theta\left(  a\right)  .
\end{gather*}
Therefore we have no choice but to define
\[
g\left(  \varepsilon,K\right)  \equiv\left\{
\begin{array}
[c]{cc}%
\bigvee_{K^{0}}\operatorname{coz}\theta\left(  a\right)  , & \varepsilon
=\bot\\
\bigvee_{K^{1}}\operatorname{con}\theta\left(  a\right)  , & \varepsilon=\top
\end{array}
\right.  ,\;K\in\mathcal{K}A,\varepsilon\in2.
\]
Clearly $g\left(  \bot,0\right)  =\bot$, and
\[
A=0\blacktriangleleft1\Longrightarrow0\in A^{1}\Longrightarrow g\left(
\top,A\right)  \geq0\left(  -\infty,1\right)  =\top.
\]
The proof is completed by showing that $g$ preserves binary meets and
arbitrary joins. This we do in a sequence of lemmas, all phrased in the
notation above.
\end{proof}

\begin{lemma}
$g$ preserves binary meets.
\end{lemma}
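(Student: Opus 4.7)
The plan is to treat the meet in $\mathcal{M}A = 2_F\mathcal{K}A$ coordinate-wise, using $(\varepsilon_1, K_1) \wedge (\varepsilon_2, K_2) = (\varepsilon_1 \wedge \varepsilon_2, K_1 \cap K_2)$, and split into three cases on $\varepsilon_1,\varepsilon_2 \in 2$. Throughout I would distribute joins over meets in $L$ and use the identification $K^0 = K \cap \overline{A}$ to reduce each case to a statement about single generators $a,b$.

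In the case $\varepsilon_1 = \varepsilon_2 = \bot$, the identity reduces to $\operatorname{coz}\theta(a) \wedge \operatorname{coz}\theta(b) = \operatorname{coz}\theta(a \wedge b)$ (since $\theta$ is an $\ell$-homomorphism) together with Proposition \ref{Prop:3}'s $[a \wedge b] = [a] \cap [b]$, which yields $a \wedge b \in (K_1 \cap K_2)^0$ whenever $a \in K_1^0$ and $b \in K_2^0$. In the case $\varepsilon_1 = \varepsilon_2 = \top$, I would use Lemma \ref{Lem:22}(2) to get $(a \vee b)\blacktriangleleft 1 = a\blacktriangleleft 1 \wedge b\blacktriangleleft 1 \subseteq K_1 \cap K_2$, so $a \vee b \in (K_1 \cap K_2)^1$, combined with $\operatorname{con}\theta(a) \wedge \operatorname{con}\theta(b) = \operatorname{con}\theta(a \vee b)$ for truncated $a,b$ (because for $c \in \overline{\mathcal{R}_0 L}$ one has $\operatorname{con} c = c(-\infty,1)$, and $(a \vee b)(-\infty,r) = a(-\infty,r) \wedge b(-\infty,r)$ via negation).

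The only delicate situation, and the main obstacle, is the mixed case $\varepsilon_1 = \bot$, $\varepsilon_2 = \top$, where the meet is $(\bot, K_1 \cap K_2)$. For the inequality $g(\bot, K_1) \wedge g(\top, K_2) \leq g(\bot, K_1 \cap K_2)$, I would distribute to single terms $\operatorname{coz}\theta(b) \wedge \operatorname{con}\theta(a)$ with $b \in K_1^0$ and $a \in K_2^1$, and apply the proof of Lemma \ref{Lem:21} to the subtrunc $\theta[A] \subseteq \mathcal{R}L$: the explicit witness it constructs has the form $\theta(c)$ with $c = (b - (na \ominus (n-1)))^+ \in a\blacktriangleleft 1 \subseteq K_2$ and $c \leq b \in K_1$, so $\overline{c} \in (K_1 \cap K_2)^0$ and $\operatorname{coz}\theta(c) = \operatorname{coz}\theta(\overline{c}) \leq g(\bot, K_1 \cap K_2)$. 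For the reverse inequality, take $d \in (K_1 \cap K_2)^0$: then $\operatorname{coz}\theta(d) \leq g(\bot, K_1)$ is immediate from $d \in K_1^0$, and since $(\top, K_2) \in \mathcal{M}A$ forces $K_2 \in F$, the corollary following Corollary \ref{Cor:3} gives $K_2 = \bigcup_{K_2^1} a\blacktriangleleft 1$, so $d \in a\blacktriangleleft 1$ for some $a \in K_2^1$. Because $\theta^{-1}$ of a truncation kernel is a truncation kernel, $\theta$ sends $a \blacktriangleleft 1$ into $\theta(a)\blacktriangleleft 1$, and Lemma \ref{Lem:20} yields $\operatorname{coz}\theta(d) \leq \operatorname{con}\theta(a) \leq g(\top, K_2)$.

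The mixed case is the hard part; everything else is bookkeeping built on Proposition \ref{Prop:3} and Lemma \ref{Lem:22}. The entire cozero subsection, culminating in Propositions \ref{Prop:4} and \ref{Prop:5}, appears to be engineered precisely to supply the $\operatorname{coz}$/$\operatorname{con}$ control that makes Case III go through.
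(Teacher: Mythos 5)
Your proposal is correct and follows essentially the same route as the paper: the same three-way case split on the first coordinates, with the $\bot\bot$ case handled by $\coz\theta(a)\wedge\coz\theta(b)=\coz\theta(a\wedge b)$ and $[a_1]\cap[a_2]=[a_1\wedge a_2]$, and the mixed case handled by the $\coz$/$\cone$ interaction that the paper packages as Proposition \ref{Prop:4} (you unwind it into Lemmas \ref{Lem:20} and \ref{Lem:21} and the explicit witness from the latter's proof, which amounts to the same thing). Your extra care in transporting $a\blacktriangleleft 1$ along $\theta$ into $\theta(a)\blacktriangleleft 1$ addresses a point the paper leaves implicit, but does not change the argument.
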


\begin{proof}
Consider $\left(  \varepsilon_{i},K_{i}\right)  \in M$. In the first case
$\varepsilon_{0}=\varepsilon_{1}=\bot$, so we have
\begin{gather*}
g\left(  \bot,K_{0}\right)  \wedge g\left(  \bot,K_{1}\right)  =
\bigvee_{K_{0}^{0}}\coz\theta\left(  a_{0}\right)  \wedge\bigvee_{K_{1}^{0}}
\coz\theta\left(  a_{1}\right)  = \bigvee_{a_{i}\in K_{i}^{0}}\left(
\coz\theta\left(  a_{0}\right)  \wedge\coz\theta\left(  a_{1}\right)  \right)
=\\
\bigvee_{K_{i}^{0}}\coz\theta\left(  a_{0}\wedge a_{1}\right)  =\bigvee
_{\left(  K_{0}\wedge K_{1}\right)  ^{0}}\coz\theta\left(  a\right)  =
g\left(  \bot,K_{0}\wedge K_{1}\right)  =g\left(  \left(  \bot,K_{0}\right)
\wedge\left(  \bot,K_{1}\right)  \right)  .
\end{gather*}
In the second case $\varepsilon_{0}=\varepsilon_{1}=\top$, and the argument
goes along similar lines. In the third and last case $\varepsilon_{0}%
=\bot<\top=\varepsilon_{1}$, so we have
\begin{gather*}
g\left(  \bot,K_{0}\right)  \wedge g\left(  \top,K_{1}\right)  =
\bigvee_{K_{0}^{0}}\coz\theta\left(  a_{0}\right)  \wedge\bigvee_{K_{1}^{1}%
}\cone\theta\left(  a_{1}\right)  = \bigvee_{a_{i}\in K_{i}^{i}}\left(
\coz\theta\left(  a_{0}\right)  \wedge\cone\theta\left(  a_{1}\right)
\right)
\end{gather*}
By Proposition \ref{Prop:4},%
\begin{gather*}
\bigvee_{a_{i}\in K_{i}^{i}}\left(  \coz\theta\left(  a_{0}\right)
\wedge\cone\theta\left(  a_{1}\right)  \right)  = \bigvee_{a_{i}\in K_{i}^{i}%
}\left(  \coz\theta\left(  a_{0}\right)  \wedge\bigvee_{c\in a_{1}%
\blacktriangleleft1}\coz\theta\left(  c\right)  \right)  =\\
\bigvee_{a_{i}\in K_{i}^{i}}\bigvee_{c\in a_{1} \blacktriangleleft1}\left(
\coz\theta\left(  a_{0}\right)  \wedge\coz\theta\left(  c\right) \right)  .
\end{gather*}
At this point it is useful to remind the reader that $K_{1}^{1}\equiv\left\{
a\in\overline{A}:a\blacktriangleleft1\subseteq K_{1}\right\}  $, so that
\begin{gather*}
\bigvee_{a_{i}\in K_{i}^{i}}\bigvee_{c\in a_{1}\blacktriangleleft1} \left(
\coz\theta\left(  a_{0}\right)  \wedge\coz\theta\left(  c\right)  \right)  =
\bigvee_{a_{i}\in K_{i}^{i}}\left(  \coz\theta\left(  a_{0}\right)
\wedge\coz\theta\left(  a_{1}\right)  \right)  =\\
\bigvee_{a\in\left(  K_{1}\wedge K_{2}\right)  ^{0}}\coz\theta\left( a\right)
= g\left(  \bot,K_{1}\wedge K_{2}\right)  = g\left(  \left(  \bot
,K_{1}\right)  \wedge\left(  \bot,K_{2}\right) \right) \qedhere
\end{gather*}
\end{proof}

\begin{lemma}
\label{Lem:25}$g$ preserves all joins of the form
\begin{align*}
\bigvee_{I}\left(  \bot,K_{i}\right)   &  =\left(  \bot,\bigvee_{I}%
K_{i}\right)  ,\;\left(  \bot,K_{i}\right)  \in\mathcal{M}A,\;\;\text{or}\\
\bigvee_{I}\left(  \top,K_{i}\right)   &  =\left(  \top,\bigvee_{I}%
K_{i}\right)  ,\;\left(  \top,K_{i}\right)  \in\mathcal{M}A.
\end{align*}

\end{lemma}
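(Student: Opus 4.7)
The plan is to split into two cases according to whether $\varepsilon = \bot$ or $\varepsilon = \top$ in the joins at hand, and in each case to verify the nontrivial direction $g(\varepsilon, K) \leq \bigvee_I g(\varepsilon, K_i)$, where $K \equiv \bigvee_I K_i$. The reverse inequality is immediate from monotonicity of the defining joins: $K_i \leq K$ gives $K_i^0 \subseteq K^0$ and $K_i^1 \subseteq K^1$, so each $g(\varepsilon, K_i)$ already appears as a sub-join of $g(\varepsilon, K)$.

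For the $\bot$ case, I would invoke the cozero identity $\bigvee_S \coz x = \bigvee_{[S]} \coz x$ established at the beginning of the previous subsection, applied to the subtrunc $\theta[A]\leq \mathcal{R}L$. Setting $S \equiv \bigcup_I K_i$, so $[S] = K$, and noting that $K^0 = \overline{A} \cap K$ together with $\coz \theta(b) = \coz \theta(\overline{|b|})$ for $b\in A$, the identity collapses $\bigvee_{K^0}\coz\theta(a)$ to $\bigvee_{b\in [S]} \coz\theta(b) = \bigvee_{b\in S}\coz\theta(b) = \bigvee_I \bigvee_{K_i^0}\coz\theta(a)$, giving $g(\bot, K) = \bigvee_I g(\bot, K_i)$ directly.

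For the $\top$ case, given $a \in K^1$ (so $a\blacktriangleleft 1 \subseteq K$), I would bound $\cone\theta(a)$ by applying Proposition~\ref{Prop:5} in $\theta[A]$: for every $b \in \overline{A}$,
\[
\cone\theta(a) \;\leq\; \cone\theta(b) \;\vee\; \bigvee_{d \in \theta(a)\blacktriangleleft 1 \text{ in } \theta[A]} \coz d.
\]
The cozero join on the right is controlled, via the cozero identity together with $a\blacktriangleleft 1 \subseteq K = [\bigcup K_i]$, by $\bigvee_I g(\bot, K_i)$, which in turn is at most $\bigvee_I g(\top, K_i)$ (using Lemma~\ref{Lem:20} and the description $K_i = \bigcup_{K_i^1} a\blacktriangleleft 1$ from the corollary after Corollary~\ref{Cor:3}). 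A judicious choice of $b \in K_j^1$ for some $j$ then forces $\cone\theta(b) \leq g(\top, K_j) \leq \bigvee_I g(\top, K_i)$, so both summands land in the target, and varying $a$ over $K^1$ finishes this direction.

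I expect the main obstacle to be the discrepancy between $\theta(a)\blacktriangleleft 1$ computed in $\theta[A]$ and the image $\theta[a\blacktriangleleft 1]$ of the truncation kernel $a\blacktriangleleft 1$ computed in $A$: the former can be strictly larger when $\ker\theta$ is nontrivial, because elements $c\in A$ with $c\wedge(a\ominus s)\in \ker\theta$ (but $c\wedge(a\ominus s)\neq 0$ in $A$) contribute to the polar in $\theta[A]$ without lying in the corresponding polar in $A$. I would dispatch this by observing that any such extra $\theta(c)$ still satisfies $\coz\theta(c) \leq (\coz\theta(a\ominus s))^{\ast}$ in $L$, so that passing to the join in $L$ erases the distinction; the extra contributions are already dominated by $\bigvee_I g(\bot, K_i)$. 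A brief case analysis handles the degenerate possibility that $K_j^1$ is empty for every $j$, in which case both sides of the target identity collapse to $\bot$.
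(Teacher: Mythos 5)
Your proof is essentially the paper's: the cozero identity $\bigvee_{S}\coz a=\bigvee_{[S]}\coz a$ disposes of the $\bot$-joins, and for the $\top$-joins the paper likewise fixes one $a_{0}\in{}^{1}K_{i_{0}}$, uses Lemma \ref{Lem:20} to get $\bigvee_{I}g(\top,K_{i})\geq\bigvee_{K}\coz\theta(a)$, and closes by applying Proposition \ref{Prop:5} to an arbitrary $c\in{}^{1}K$; your ``degenerate'' case cannot occur, since $(\top,K_{i})\in\mathcal{M}A$ already forces $K_{i}\in F$, i.e.\ ${}^{1}K_{i}\neq\emptyset$. The one place you go beyond the paper --- the $A$-versus-$\theta[A]$ discrepancy, which the paper silently elides --- is also the one place your argument wobbles: the extra elements of $\theta(a)\blacktriangleleft1$ are dominated by $\left(\coz\theta(a\ominus s)\right)^{\ast}$, whose join over $s<1$ is $\cone\theta(a)$ itself, not by $\bigvee_{I}g(\bot,K_{i})$, so that patch is circular as stated. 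The clean repair runs the other way: since $\theta$ pulls truncation kernels back to truncation kernels, $\theta\left[[S]_{A}\right]\subseteq\left[\theta[S]\right]_{\theta[A]}$, whence $\bigvee_{x\in S}\coz\theta(x)=\bigvee_{x\in[S]_{A}}\coz\theta(x)$ for $S\subseteq A$; feeding this into the proof of Proposition \ref{Prop:5} (with $C=a\blacktriangleleft1\cup\{a\ominus r\}$ and $[C]_{A}=A$) yields $\cone\theta(b)\vee\bigvee_{x\in a\blacktriangleleft1}\coz\theta(x)\geq\cone\theta(a)$ with $a\blacktriangleleft1$ computed in $A$, which is exactly the form needed since $a\blacktriangleleft1\subseteq K$.
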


\begin{proof}
Let $K\equiv\bigvee_{I}K_{i}$. We have
\begin{gather*}
\bigvee_{I}g\left(  \bot,K_{i}\right)  = \bigvee_{I}\bigvee_{^{0}K_{i}%
}\coz\theta\left(  a_{i}\right)  = \bigvee_{I}\bigvee_{K_{i}} \coz\theta\left(
a_{i}\right)  = \bigvee_{\cup K_{i}}\coz\theta\left(  a\right)  =\\
\bigvee_{\left[  \cup K_{i}\right]  }\coz\theta\left(  a\right)  = \bigvee
_{K}\coz\theta\left( a\right)  = g\left(  \bot,K\right)  =g\left(  \bigvee
_{I}\left(  \bot,K_{i}\right) \right)
\end{gather*}
On the other hand,
\begin{gather*}
\bigvee_{I}g\left(  \top,K_{i}\right)  = \bigvee_{I}\bigvee_{^{1}K_{i}%
}\cone\theta\left(  a_{i}\right)  = \bigvee_{\cup{}^{1}K_{i}}\cone\theta\left(
a\right)  \leq\\
\bigvee_{{}^{1}K}\cone\theta\left(  a\right)  = g\left(  \top,K\right)
=g\left(  \bigvee_{I}\left(  \top,K_{i}\right) \right)  .
\end{gather*}
To prove the opposite inequality, observe first that, by Lemma \ref{Lem:20},
\begin{gather*}
\bigvee_{I}g\left(  \top,K_{i}\right)  = \bigvee_{I}\bigvee_{^{1}K_{i}}
\cone\theta\left(  a_{i}\right)  \geq\bigvee_{I}\bigvee_{K_{i}}\coz\theta
\left(  a_{i}\right)  =\\
\bigvee_{\cup K_{i}}\coz\theta\left(  a\right)  =\bigvee_{\left[  \cup
K_{i}\right]  }\coz\theta\left(  a\right)  =\bigvee_{K}\coz\theta\left(
a\right)  .
\end{gather*}
Fix $i_{0}\in I$ and $a_{0}\in{}^{1}K_{i_{0}}$, so that
\[
\bigvee_{I}g\left(  \top,K_{i}\right)  \geq g\left(  \top,K_{i_{0}}\right)
=\bigvee_{^{1}K_{i_{0}}}\cone\theta\left(  a\right)  \geq\cone\theta\left(
a_{0}\right)  .
\]
Now consider an arbitrary $c\in{}^{1}K$, for which we would have $\bigvee
_{K}\coz\theta\left(  a\right)  \geq\bigvee_{c\blacktriangleleft1}
\coz\theta\left(  a\right)  $ since $c\blacktriangleleft1\subseteq K$. We then
get from Proposition \ref{Prop:5} that
\[
\bigvee_{I}g\left(  \top,K_{i}\right)  \geq\cone\theta\left(  a_{0}\right)
\vee\bigvee_{K}\coz\theta\left(  a\right)  \geq\cone\theta\left(
a_{0}\right)  \vee\bigvee_{c\blacktriangleleft1}\coz \theta\left(  a\right)
\geq\cone c.
\]
Since $c$ was chosen arbitrarily, we have proven that $\bigvee_{I}g\left(
\top,K_{i}\right)  \geq\bigvee_{^{1}K}\cone$, which is to say that we have
proven the lemma.
\end{proof}

\begin{lemma}
\label{Lem:26}$g$ preserves binary joins of the form
\[
\left(  \bot,K_{0}\right)  \vee\left(  \top,K_{1}\right)  =\left(  \top,
K_{1}\vee K_{2}\right)  ,\;\left(  \bot,K_{0}\right)  , \left(  \top
,K_{1}\right)  \in\mathcal{M}A.
\]

\end{lemma}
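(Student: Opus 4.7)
The plan is to establish the equality by two inequalities, closely mirroring the structure of the proof of Lemma~\ref{Lem:25}.

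For the direction $g(\bot,K_{0})\vee g(\top,K_{1})\leq g(\top,K_{0}\vee K_{1})$: the inclusion ${}^{1}K_{1}\subseteq {}^{1}(K_{0}\vee K_{1})$ immediately yields $g(\top,K_{1})\leq g(\top,K_{0}\vee K_{1})$. For the other summand, I would invoke Lemma~\ref{Lem:20}: each $a\in{}^{0}K_{0}$ lies in $K_{0}\subseteq K_{0}\vee K_{1}$, and by the Corollary following Lemma~\ref{Lem:23} we can write $K_{0}\vee K_{1}=\bigcup_{b\in{}^{1}(K_{0}\vee K_{1})}b\blacktriangleleft 1$, so some such $b$ satisfies $a\in b\blacktriangleleft 1$. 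Then Lemma~\ref{Lem:20} gives $\coz\theta(a)\leq\cone\theta(b)\leq g(\top,K_{0}\vee K_{1})$.

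For the reverse direction, the strategy parallels the harder half of Lemma~\ref{Lem:25}. First I would show the auxiliary bound
\[
g(\bot,K_{0})\vee g(\top,K_{1})\;\geq\;\bigvee_{K_{0}\vee K_{1}}\coz\theta(a).
\]
To see this, split the index set: for $a\in K_{0}$, use $\coz\theta(a)=\coz\theta(\overline{a})$ together with $\overline{a}\in{}^{0}K_{0}$ to get $\coz\theta(a)\leq g(\bot,K_{0})$; for $a\in K_{1}$, the identity $K_{1}=\bigcup_{{}^{1}K_{1}}b\blacktriangleleft 1$ combined with Lemma~\ref{Lem:20} gives $\coz\theta(a)\leq\cone\theta(b)\leq g(\top,K_{1})$ for a suitable $b$. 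The first lemma of the cozero subsection then bridges $K_{0}\cup K_{1}$ to $K_{0}\vee K_{1}=[K_{0}\cup K_{1}]$. Next, for an arbitrary $c\in{}^{1}(K_{0}\vee K_{1})$, fix any $a_{0}\in{}^{1}K_{1}$ and apply Proposition~\ref{Prop:5} to obtain
\[
\cone\theta(c)\;\leq\;\cone\theta(a_{0})\vee\bigvee_{d\in c\blacktriangleleft 1}\coz\theta(d)\;\leq\;g(\top,K_{1})\vee\bigvee_{K_{0}\vee K_{1}}\coz\theta(d)\;\leq\;g(\bot,K_{0})\vee g(\top,K_{1}),
\]
where the middle inequality uses $c\blacktriangleleft 1\subseteq K_{0}\vee K_{1}$ and the last step uses the auxiliary bound just established. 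Taking the join over all $c\in{}^{1}(K_{0}\vee K_{1})$ finishes the proof.

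The main obstacle is the bookkeeping across the three layers ${}^{0}K$, ${}^{1}K$, and $K$ itself, and the handling of edge cases in which ${}^{1}K_{1}$ is empty or $K_{0}\vee K_{1}$ contains no element of the form $b\blacktriangleleft 1$. These degenerate situations force the relevant kernels to collapse to $\bot$, making both sides of the identity trivial; but they should be explicitly flagged so that the appeals to the Corollary after Lemma~\ref{Lem:23} and to Proposition~\ref{Prop:5} are unambiguously legal in the generic case. Once those are isolated, the rest is a disciplined concatenation of the cozero identities, Lemmas~\ref{Lem:20} and \ref{Lem:21}, and Propositions~\ref{Prop:4} and \ref{Prop:5}.
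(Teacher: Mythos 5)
Your proof is correct and, for the harder inequality $g(\top,K_{0}\vee K_{1})\leq g(\bot,K_{0})\vee g(\top,K_{1})$, it is essentially the paper's argument: both establish the auxiliary bound $g(\bot,K_{0})\vee g(\top,K_{1})\geq\bigvee_{K_{0}\vee K_{1}}\coz\theta(a)$ (the paper routes this through $g(\bot,K_{0})\vee g(\bot,K_{1})=g(\bot,K_{0}\vee K_{1})$ from Lemma \ref{Lem:25}) and then invoke Proposition \ref{Prop:5} with a fixed $a_{0}\in{}^{1}K_{1}$ and an arbitrary $c\in{}^{1}(K_{0}\vee K_{1})$. For the forward inequality you diverge mildly: the paper combines the two joins into $\bigvee_{b\in{}^{0}K_{0}}\bigvee_{a\in{}^{1}K_{1}}\cone\theta\left((a-b)^{+}\right)$ and applies Lemma \ref{Lem:23} to place $(a-b)^{+}$ in ${}^{1}(K_{0}\vee K_{1})$, whereas you dominate each $\coz\theta(a)$, $a\in{}^{0}K_{0}$, separately via the corollary to Lemma \ref{Lem:23} and Lemma \ref{Lem:20}. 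Both work; your version avoids the identity $\coz\theta(b)\vee\cone\theta(a)=\cone\theta\left((a-b)^{+}\right)$ at the cost of an extra appeal to the corollary. One correction to your closing remarks: the degenerate cases you flag cannot occur and do not "collapse to $\bot$." Since $(\top,K_{1})\in\mathcal{M}A=2_{F}\mathcal{K}A$ forces $K_{1}\in F$, and $F$ is generated by the kernels $a\blacktriangleleft1$ (a base closed under finite meets by Lemma \ref{Lem:22}(2)), we get $K_{1}\supseteq a\blacktriangleleft1$ for some $a\in\overline{A}$; hence ${}^{1}K_{1}$ and ${}^{1}(K_{0}\vee K_{1})$ are automatically nonempty and every appeal to the corollary and to Proposition \ref{Prop:5} is legal with no case split.
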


\begin{proof}
To this join $g$ assigns the frame element $g\left(  \top,K\right)
=\bigvee_{^{1}K}\cone\theta\left(  a\right)  $, where $K\equiv K_{1}\vee
K_{2}$. We should compare this to
\begin{gather*}
g\left(  \bot,K_{0}\right)  \vee g\left(  \top,K_{1}\right)  = \left(
\bigvee_{^{0}K_{0}}\coz\theta\left(  b\right)  \right)  \vee\left(
\bigvee_{^{1}K_{1}}\cone\theta\left(  a\right)  \right)  =\\
\bigvee_{b\in{}^{0}K_{0}}\bigvee_{a\in{}^{1}K_{1}} \left(  \coz\theta\left(
b\right)  \vee\cone\theta\left(  a\right)  \right)  = \bigvee_{b\in{}^{0}%
K_{0}}\bigvee_{a\in{}^{1}K_{1}}\cone\theta\left(  a-b\right)  ^{+}.
\end{gather*}
Now $b\in{}^{0}K_{0}$ means that $b\blacktriangleright0\subseteq K_{0}$ and
$a\in{}^{1}K_{1}$ means that $a\blacktriangleleft1\subseteq K_{1}$, so by
Lemma \ref{Lem:23} we get%
\[
\left(  a-b\right)  ^{+}\blacktriangleleft1=b\blacktriangleright0\vee
a\blacktriangleleft1\subseteq K_{0}\vee K_{1}.
\]
In other words $\left(  a-b\right)  ^{+}\in{}^{1}K$, with the consequence
that
\[
\bigvee_{b\in{}^{0}K_{0}}\bigvee_{a\in{}^{1}K_{1}}\cone\theta\left(
a-b\right)  ^{+}\leq\bigvee_{^{1}K}\cone\theta\left(  c\right)  .
\]

Proposition \ref{Prop:5} provides the key step in the proof of the opposite
inequality. Fix some $a\in{}^{1}K_{1}$, and remember that $g\left(  \top
,K_{1}\right)  \geq\cone\theta\left(  a\right)  $. Consider any $b\in{}^{1}K$,
and remember that $g\left(  \bot,K_{0}\right)  \geq\bigvee
_{b\blacktriangleleft1}\coz\theta\left(  c\right)  $. Then see that
\begin{gather*}
g\left(  \bot,K_{0}\right)  \vee g\left(  \top,K_{1}\right)  \geq
\cone\theta\left(  a\right)  \vee g\left(  \bot,K_{0}\right)  \vee g\left(
\bot,K_{1}\right)  =\\
\cone\theta\left(  a\right)  \vee\bigvee_{^{0}K}\coz\theta\left(  c\right)
\geq\cone\theta\left(  a\right)  \vee\bigvee_{b\blacktriangleleft1}%
\coz\theta\left(  c\right)  \geq\cone\theta\left(  b\right)  .
\end{gather*}
Since $b$ was chosen arbitrarily, we get $g\left(  \bot,K_{0}\right)  \vee
g\left(  \top,K_{1}\right)  \geq\bigvee_{^{1}K}\cone\theta\left(  c\right)  $,
as desired.
\end{proof}

\begin{lemma}
$g$ preserves all joins.
\end{lemma}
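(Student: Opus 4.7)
The plan is to reduce an arbitrary join in $\mathcal{M}A=2_{F}\mathcal{K}A$ to the three cases already handled in Lemmas \ref{Lem:25} and \ref{Lem:26}. Given a family $\{(\varepsilon_i,K_i):i\in I\}\subseteq\mathcal{M}A$, partition the index set as $I=I_{\bot}\sqcup I_{\top}$ according to whether $\varepsilon_i=\bot$ or $\varepsilon_i=\top$, and set
\[
J_{\bot}\equiv\bigvee_{I_{\bot}}K_{i},\qquad J_{\top}\equiv\bigvee_{I_{\top}}K_{i}.
\]
Then in $2_{F}\mathcal{K}A$ we have
\[
\bigvee_{I}(\varepsilon_i,K_i)
=\bigvee_{I_{\bot}}(\bot,K_i)\;\vee\;\bigvee_{I_{\top}}(\top,K_i)
=(\bot,J_{\bot})\vee(\top,J_{\top}),
\]
the second equality being an instance of Lemma \ref{Lem:25}. (If $I_{\top}=\varnothing$, interpret $(\top,J_{\top})$ as $\bot$; note $J_{\top}$ is a join of elements lying in $F$, so it automatically lies in $F$ and the right-hand side is a legal element of $2_F \mathcal{K}A$.)

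Next, I would apply $g$ to both sides and use the two parts of Lemma \ref{Lem:25} separately to pass the join out of the first coordinate in each piece:
\[
g\!\Bigl(\bigvee_{I_{\bot}}(\bot,K_i)\Bigr)=\bigvee_{I_{\bot}}g(\bot,K_i),
\qquad
g\!\Bigl(\bigvee_{I_{\top}}(\top,K_i)\Bigr)=\bigvee_{I_{\top}}g(\top,K_i).
\]
Finally, Lemma \ref{Lem:26} tells us that $g\bigl((\bot,J_{\bot})\vee(\top,J_{\top})\bigr)=g(\bot,J_{\bot})\vee g(\top,J_{\top})$. Putting these together yields
\[
g\!\Bigl(\bigvee_{I}(\varepsilon_i,K_i)\Bigr)
=g(\bot,J_{\bot})\vee g(\top,J_{\top})
=\bigvee_{I_{\bot}}g(\bot,K_i)\vee\bigvee_{I_{\top}}g(\top,K_i)
=\bigvee_{I}g(\varepsilon_i,K_i),
\]
which is what we want.

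There is no real obstacle here; the content lies entirely in the prior two lemmas, and the present lemma is just the bookkeeping that shows an arbitrary join decomposes as a binary join of a ``bottom-coordinate'' join and a ``top-coordinate'' join. The only point requiring a moment's care is the degenerate case in which one of $I_{\bot}$ or $I_{\top}$ is empty, where the corresponding summand should be read as $\bot$ and the whole identity collapses to Lemma \ref{Lem:25}.
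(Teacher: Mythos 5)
Your proof is correct and follows essentially the same route as the paper: split the index set by the first coordinate, apply Lemma \ref{Lem:25} to each homogeneous piece, and finish with Lemma \ref{Lem:26} on the resulting binary join. Your extra care about the degenerate case where one part of the partition is empty is a nice touch the paper omits, but it does not change the argument.
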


\begin{proof}
Consider the join $\bigvee_{I}\left(  \varepsilon_{i},K_{i}\right)  =\left(
\varepsilon,\bigvee_{I}K_{i}\right)  $, and let $I_{0}\equiv\left\{  i\in
I:\varepsilon_{i}=\bot\right\}  $ ($I_{1}\equiv\left\{  i\in I:\varepsilon
_{i}=\top\right\}  $). Then this join can be parsed as
\[
\bigvee_{I}\left(  \varepsilon_{i},K_{i}\right)  =\bigvee_{I_{0}}\left(
\varepsilon_{i},K_{i}\right)  \vee\bigvee_{I_{1}}\left(  \varepsilon_{i}%
,K_{i}\right)  ,
\]
and, by Lemmas \ref{Lem:25} and \ref{Lem:26}, $g$ preserves the joins on the right.
\end{proof}

The proof of Theorem \ref{Thm:3} is complete.

\section{$\mathbf{W}$ is monoreflective in $\mathbf{AT}$\textbf{\label{Sec:3}}}

In this section we establish that $\mathbf{W}$ is monoreflective in
$\mathbf{AT}$, i.e., that every $\mathbf{AT}$-object is the domain of a
$\mathbf{W}$-universal arrow.

\subsection{Characterizing $\mathbf{W}$-objects}

\begin{proposition}
\label{Prop:6}The following are equivalent for a trunc $A$.

\begin{enumerate}
\item $A$ lies in $\mathbf{W}$, i.e., $A^{+}$ contains an element $a_{0}$ such
that $\overline{a}=a\wedge a_{0}$ for all $a\in A^{+}$.

\item There is some element $a_{0}\in\overline{A}$ for which $a_{0}%
\blacktriangleleft1=0$.

\item $\mathcal{M}A\in\mathbf{ipF}$, i.e., the designated point of
$\mathcal{M}A$ is isolated.

\item $\overline{A}$ contains a greatest element.
\end{enumerate}
\end{proposition}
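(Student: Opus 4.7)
I would prove the four equivalences by closing the cycle (1)$\Rightarrow$(4)$\Rightarrow$(1), (1)$\Leftrightarrow$(2), and (2)$\Leftrightarrow$(3).

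First the easy half of (1)$\Leftrightarrow$(4). If $A\in\mathbf{W}$ with unit $a_{0}$, then $\overline{a_{0}}=a_{0}\wedge a_{0}=a_{0}$, so $a_{0}\in\overline{A}$; and for any $c\in\overline{A}$, $c=\overline{c}=c\wedge a_{0}\leq a_{0}$, so $a_{0}=\max\overline{A}$. Conversely, if $a_{0}=\max\overline{A}$, then $\overline{a}\in\overline{A}$ gives $\overline{a}\leq a_{0}$, and with $\overline{a}\leq a$ we get $\overline{a}\leq a\wedge a_{0}$. For the reverse inequality, axiom ($\mathfrak{T}1$) gives $a\wedge\overline{a_{0}}\leq\overline{a}$, and since $\overline{a_{0}}=a_{0}$, this is $a\wedge a_{0}\leq\overline{a}$. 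Hence $\overline{a}=a\wedge a_{0}$ and $A\in\mathbf{W}$.

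For (1)$\Rightarrow$(2) I would compute $a_{0}\blacktriangleleft 1$ directly. For $0<s<1$ the unit property gives $\overline{a_{0}/s}=(a_{0}/s)\wedge a_{0}=a_{0}$, so $a_{0}\ominus s=a_{0}-s\overline{a_{0}/s}=(1-s)a_{0}$, whence $[a_{0}\ominus s]=[a_{0}]$ and $[a_{0}\ominus s]^{\ast}=a_{0}^{\bot}$ by Corollary \ref{Cor:2}. But $a\wedge a_{0}=\overline{a}$ and ($\mathfrak{T}2$) force $a_{0}^{\bot}=0$, hence $a_{0}\blacktriangleleft 1=\bigvee_{0<s<1}0=0$.

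For (2)$\Rightarrow$(4), let $a_{0}\in\overline{A}$ with $a_{0}\blacktriangleleft 1=0$, and take an arbitrary $c\in\overline{A}$. The key preliminary observation is that $\overline{A}$ is a lower set: if $0\leq b\leq a$ and $a\in\overline{A}$, then by ($\mathfrak{T}1$) $b=b\wedge a=b\wedge\overline{a}\leq\overline{b}\leq b$, so $b\in\overline{A}$. Also $\overline{A}$ is closed under $\vee$ since $\overline{a_{1}\vee a_{2}}=\overline{a_{1}}\vee\overline{a_{2}}$ by \cite[3.3.1(8)]{Ball:2013}. Now set $b\equiv(c-a_{0})^{+}$. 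Then $b\leq c\vee a_{0}\in\overline{A}$, so $b\in\overline{A}$, and $a_{0}+b=a_{0}\vee c\in\overline{A}$. Lemma \ref{Lem:19} then yields $b\in a_{0}\blacktriangleleft 1=0$, so $(c-a_{0})^{+}=0$ and $c\leq a_{0}$. Thus $a_{0}=\max\overline{A}$.

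Finally (2)$\Leftrightarrow$(3). By construction $\mathcal{M}A=2_{F}\mathcal{K}A$, where $F$ is the filter generated by $\{a\blacktriangleleft 1:a\in\overline{A}\}$. Since $\overline{A}$ is $\vee$-closed and $a_{1}\blacktriangleleft 1\wedge a_{2}\blacktriangleleft 1=(a_{1}\vee a_{2})\blacktriangleleft 1$ by Lemma \ref{Lem:22}(2), the generating set is already closed under finite meets, so $F$ is improper iff some $a\blacktriangleleft 1=\bot$. On the other hand the designated point of $2_{F}\mathcal{K}A$ is isolated iff its kernel $(\bot,\top)$ is complemented, and inspection shows that a complement must take the form $(\top,\bot)$, which lies in $2_{F}\mathcal{K}A$ precisely when $\bot\in F$, i.e.\ when $F$ is improper. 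This identifies (3) with (2) and completes the cycle.

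The only step requiring a little care is (2)$\Rightarrow$(4); once one notices that $\overline{A}$ is a lower set, Lemma \ref{Lem:19} does the work. The other implications are near-tautological given the machinery already developed.
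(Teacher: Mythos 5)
Your proof is correct, and while three of the four links ((4)$\Rightarrow$(1), (1)$\Rightarrow$(2) via the computation $a_{0}\ominus s=(1-s)a_{0}$, and (2)$\Leftrightarrow$(3) via the improperness of the filter $F$) essentially coincide with the paper's, your treatment of the one nontrivial implication (2)$\Rightarrow$(4) takes a genuinely different and arguably cleaner route. The paper argues directly: from $a_{0}\blacktriangleleft1=0$ it extracts $\left[a_{0}\ominus s\right]=A$ for $s<1$ (using Lemma \ref{Lem:9} to pass from $\left[a_{0}\ominus r\right]^{\ast}=0$ to $\left[a_{0}\ominus s\right]=\top$), then invokes the identity $a_{0}\ominus s\wedge\left(a-a_{0}\right)^{+}\ominus\left(1-s\right)\leq\left(a\vee a_{0}\right)\ominus1=0$ from \cite[3.3.7]{Ball:2013} together with Lemma \ref{Lem:1} to conclude $\left(a-a_{0}\right)^{+}=0$. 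You instead observe that $\overline{A}$ is a lower set closed under joins, set $b\equiv\left(c-a_{0}\right)^{+}$ so that $a_{0}+b=a_{0}\vee c\in\overline{A}$, and let Lemma \ref{Lem:19} deliver $b\in a_{0}\blacktriangleleft1=0$ in one stroke. Since Lemma \ref{Lem:19} is itself proved from \cite[3.3.7]{Ball:2013}, the underlying mechanism is the same, but your packaging avoids redoing the $\ominus$-arithmetic and makes the role of the already-established machinery more transparent. Your additional implication (1)$\Rightarrow$(4) is logically redundant given the cycle (1)$\Rightarrow$(2)$\Rightarrow$(4)$\Rightarrow$(1), but harmless. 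Two small points worth stating explicitly if you write this up: the identification of the bottom of $\mathcal{K}A$ with $\left\{0\right\}$ (which needs ($\mathfrak{T}2$) and the archimedean property), and the fact that "isolated point" is equivalent to "complemented kernel," which you use in (2)$\Leftrightarrow$(3) and which follows from maximality of the kernel in a regular frame.
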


\begin{proof}
(1) implies (2). Suppose $A^{+}$ contains an element $a_{0}$ such that
$\overline{a}=a\wedge a_{0}$ for all $a\in A^{+}$. Observe that $\left[
a_{0}\right]  =A$, for $\left[  a_{0}\right]  $ contains $\overline{a}=a\wedge
a_{0}$ for all $a\in A^{+}$, and $\left[  a_{0}\right]  $ has the closure
property of Lemma \ref{Lem:5}(2). Therefore, for $s<1$ in $\mathbb{F}$ we
have
\begin{gather*}
\left[  a_{0}\ominus s\right]  = \left[  s\left(  \frac{a_{0}}{s}%
\ominus1\right)  \right]  = \left[  s\left(  \frac{a_{0}}{s}-a_{0}\right)
^{+}\right]  = \left[  \left(  1-s\right)  a_{0}\right]  =\left[
a_{0}\right]  =A.
\end{gather*}
The second equality is is a consequence of the fact that, since truncation in
$A$ is given by meet with $a_{0}$, diminution is given by the rule
$a\ominus1=\left(  a-a_{0}\right)  ^{+}$, $a\in A^{+}$. The point is that
\[
a_{0}\blacktriangleleft1\equiv\bigvee_{0<s<1}\left[  a_{0}\ominus s\right]
^{\ast}=0,
\]
the bottom element of $\mathcal{K}A$.

(2) is equivalent to (3). According to the categorical equivalence between
$\mathbf{pF}$ and $\mathbf{fF}$ outlined in Proposition \ref{Prop:1},
$\mathcal{M}A$ is isolated iff $\mathcal{DM}A=\left(  \mathcal{K}A,F\right)  $
is fully filtered, i.e. iff the filter $F$ used to define $\mathcal{M}A$ from
$\mathcal{K}A$ is improper, meaning $0\in F$. But $F$ is generated by
truncation kernels of the form $a\blacktriangleleft1$, $a\in\overline{A}$.

(2) implies (4). Suppose that, for some $a_{0}\in\overline{A}$, we have
\[
0=a_{0}\blacktriangleleft1=\bigvee_{s<1}\left[  a_{0}\ominus s\right]  ^{\ast
}.
\]
It follows that $\left[  a_{0}\ominus s\right]  =A$ for all $s<1$. According
to Lemma 3.37 of \cite{Ball:2013},
\[
a_{0}\ominus s\wedge\left(  a-a_{0}\right)  ^{+}\ominus\left(  1-s\right)
\leq\left(  a\vee a_{0}\right)  \ominus1=0
\]
for all $a\in\overline{A}$. Therefore
\[
\left[  a_{o}\ominus s\right]  \wedge\left[  \left(  a-a_{0}\right)
^{+}\ominus\left(  1-s\right)  \right]  =0.
\]
Since $\left[  a_{0}\ominus s\right]  =A$, it follows that $\left[  \left(
a-a_{0}\right)  ^{+}\ominus\left(  1-s\right)  \right]  =0$. From Lemma
\ref{Lem:1} we then get
\[
\left[  \left(  a-a_{0}^{+}\right)  \right]  =\bigvee_{\mathbb{N}}\left[
\left(  a-a_{0}\right)  ^{+}\ominus\frac{1}{n}\right]  =0,
\]
with the result that $\left(  a-a_{0}\right)  ^{+}=0$, i.e., $a_{0}\geq a$.

(4) implies (1). Suppose that $\overline{A}$ contains greatest element $\overline{b}$. 
Then for any $a \in A^+$ we have $a \wedge \overline{b} \leq \overline{a}$ by axiom 
($\mathfrak{T}1$). But $\overline{a} \leq a$ by the same axiom, and 
$\overline{a} \leq \overline{b}$ by hypothesis, with the result that $\overline{a} \leq a \wedge \overline{b}$. In short, (1) holds.
\end{proof}

\begin{proposition}
Let $A$ be a trunc in $\mathbf{W}$, and let $a_{0}$ be the largest element of
$\overline{A}$. Then, in $\widehat{A}=\mu_{A}\left[  A\right]  $, we have%
\[
\widehat{a_{0}}\left(  U\right)  =\left\{
\begin{array}
[c]{lll}%
\left(  \top,\top\right)  & , & 0,1\in U\\
\left(  \top,\bot\right)  & , & 1\notin U\ni0\\
\left(  \bot,\top\right)  & , & 0\notin U\ni1\\
\left(  \bot,\bot\right)  & , & 0,1\notin U
\end{array}
\right.  .
\]

\end{proposition}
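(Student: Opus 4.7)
The plan is to reduce the statement to two separate computations, one for each coordinate of $\widehat{a_0}(U)\in\mathcal{M}A$. First I would invoke Proposition~\ref{Prop:6}: since $A\in\mathbf{W}$ with largest element $a_0\in\overline{A}$, we have $a_0\blacktriangleleft 1=\bot$, so the filter $F$ used to define $\mathcal{M}A=2_F\mathcal{K}A$ contains $\bot$ and is therefore improper. Consequently $\mathcal{M}A=2\mathcal{K}A=2\times\mathcal{K}A$, and the standard product projections $\ast:\mathcal{M}A\to 2$ and $\pi:\mathcal{M}A\to\mathcal{K}A$ together detect every element. Hence it suffices to show that $\ast\circ\widehat{a_0}$ is the frame point of $\mathcal{O}\mathbb{R}$ at $0$, and that $\pi\circ\widehat{a_0}$ is (the composite with $2\hookrightarrow\mathcal{K}A$ of) the frame point of $\mathcal{O}\mathbb{R}$ at $1$.

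The first coordinate is immediate: by construction $\widehat{a_0}\in\mathcal{R}_0\mathcal{M}A$, so $\ast_{\mathcal{M}A}\circ\widehat{a_0}$ equals the distinguished point $0:\mathcal{O}\mathbb{R}\to 2$, and therefore sends $U\in\mathcal{O}\mathbb{R}$ to $\top$ exactly when $0\in U$.

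For the second coordinate I would use the explicit formula from Theorem~\ref{Thm:4}, which gives
\[
\pi\circ\widehat{a_0}(r,\infty)=\begin{cases}[a_0\ominus r],&r\geq 0,\\ \top,&r<0,\end{cases}
\]
and then evaluate $[a_0\ominus r]$ in the three relevant regimes. For $r<0$ there is nothing to check. For $0\leq r<1$ (with $r\in\mathbb{Q}$), the computation already performed in the proof of Proposition~\ref{Prop:6} (the step showing $(1)\Rightarrow(2)$) yields $[a_0\ominus r]=A=\top$. For $r\geq 1$, the fact that $a_0\in\overline{A}$ gives $a_0\ominus 1=a_0-\overline{a_0}=0$, hence $a_0\ominus r\leq a_0\ominus 1=0$ and so $[a_0\ominus r]=\bot$. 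Thus $\pi\circ\widehat{a_0}$ agrees on the basis $\{(r,\infty):r\in\mathbb{Q}\}$ with the composite of the point $1:\mathcal{O}\mathbb{R}\to 2$ and the inclusion $2\hookrightarrow\mathcal{K}A$; since frame maps are determined by their values on a basis, equality holds on every $U\in\mathcal{O}\mathbb{R}$, giving $\pi\circ\widehat{a_0}(U)=\top$ iff $1\in U$ and $\bot$ otherwise.

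Assembling the two coordinates produces exactly the four cases displayed in the statement. There is no serious obstacle: the only nontrivial ingredient is the identification $[a_0\ominus r]=A$ for $0\leq r<1$, which has already been established inside Proposition~\ref{Prop:6} and can be cited rather than re-derived. The rest is a matter of reading off the product decomposition $\mathcal{M}A=2\times\mathcal{K}A$ afforded by the unital hypothesis.
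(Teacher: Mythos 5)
Your proposal is correct and follows essentially the same route as the paper: the substance in both cases is the computation $[a_0\ominus r]=\top$ for $0\le r<1$ and $[a_0\ominus r]=\bot$ for $r\ge 1$, followed by passage from values on upper rays to arbitrary opens. Your coordinate-wise packaging (reading the first coordinate off the membership $\widehat{a_0}\in\mathcal{R}_0\mathcal{M}A$) is a mild streamlining; the one quibble is that the rays $\left(r,\infty\right)$ form a generating set rather than a basis of $\mathcal{O}\mathbb{R}$, so the final step should appeal to determination of an element of $\mathcal{R}L$ by its values on upper rays via $f\left(-\infty,r\right)=\bigvee_{s<r}f\left(s,\infty\right)^{\ast}$, which is exactly the principle the paper invokes.
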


\begin{proof}
By definition (see Theorem \ref{Thm:4}) we have
\[
\widehat{a_{0}}\left(  r,\infty\right)  \equiv\left\{
\begin{array}
[c]{ll}%
\left(  \bot,a_{0}\blacktriangleright r\right)  , & r\geq0\\
\left(  \top,\top\right)  , & r<0
\end{array}
\right.  =\left\{
\begin{array}
[c]{cc}%
\left(  \bot,\left[  a_{0}\ominus r\right]  \right)  , & r\geq0\\
\left(  \top,\top\right)  , & r<0
\end{array}
\right.  ,\;
\]
This comes to $\left(  \bot,\left[  a_{0}\right]  \right)  $ when $r=0$, and
for $r>0$ we get
\begin{gather*}
\widehat{a_{0}}\left(  r,\infty\right)  = \left(  \bot,a_{0}%
\blacktriangleright r\right)  = \left(  \bot,\left[  a_{0}\ominus r\right]
\right)  = \left(  \bot,\left[  r\left(  \frac{a_{0}}{r}\ominus1\right)
\right]  \right)  =\\
\left(  \bot,\left[  r\left(  \frac{a_{0}}{r}-a_{0}\right)  \right] \right)  =
\left(  \bot,\left[  \left(  1-r\right)  ^{+}a_{0}\right]  \right)  = \left\{
\begin{array}
[c]{lll}%
\left(  \bot,\left[  \left(  1-r\right)  ^{+}a_{0}\right]  \right)  & , &
0<r<1\\
\left(  \bot,\bot\right)  & , & r\geq1
\end{array}
\right.  =\\
\left\{
\begin{array}
[c]{ccc}%
\left(  \bot,\left[  a_{0}\right]  \right)  & , & 0<r<1\\
\left(  \bot,\bot\right)  & , & r\geq1
\end{array}
\right.  .
\end{gather*}
The last equality results from the fact that truncation kernels are closed
under scalar multiplication.

We claim that $\left[  a_{0}\right]  =\top$. This follows directly from two
facts: first, $\left[  a_{0}\right]  \supseteq\overline{A}$ since $a_{0}$ is
the greatest element of $\overline{A}$, and second, $\left[  a_{0}\right]  $
satisfies property (2) of Lemma \ref{Lem:5}. Thus we can summarize the
situation as follows.%
\[
\widehat{a_{0}}\left(  r,\infty\right)  =\left\{
\begin{array}
[c]{lll}%
\left(  \top,\top\right)   & , & r<0\\
\left(  \bot,\top\right)   & , & 0\leq r<1\\
\left(  \bot,\bot\right)   & , & r\geq1
\end{array}
\right.
\]
In light of the fact that $\widehat{a_{0}}\left(  -\infty,r\right)
=\bigvee_{s<r}\widehat{a_{0}}\left(  s,\infty\right)  ^{\ast}$, this
information supports the inference that
\[
\widehat{a_{0}}\left(  -\infty,r\right)  =\left\{
\begin{array}
[c]{lll}%
\left(  \top,\top\right)   & , & r>1\\
\left(  \top,\bot\right)   & , & 0<r\leq1\\
\left(  \bot,\bot\right)   & , & r\leq0
\end{array}
\right.  .
\]
The proposition itself is a consequence of the last two displayed equations.
\end{proof}

Let $A$ be a trunc with spectrum $\mathcal{M}A=2_{F}\mathcal{K}A$, and let
$\nu_{M}$ designate the insertion $2_{F}\mathcal{K}A\rightarrow2KA$. Applying
the $\mathcal{R}_{\mathbf{pF}}$ functor to $\nu_{M}$ and composing the result
with $\mu_{A}$ provides a trunc injection $A\rightarrow B\equiv\mathcal{R}%
_{\mathbf{pF}}\left(  2\mathcal{K}A\right)  $ which we denote by
\[
\omega_{A}\equiv\mathcal{R}_{\mathbf{pF}}\nu_{M}\circ\mu_{A}.
\]
We abuse the notation to the extent of using $\widehat{a}$ to denote
$\omega_{A}\left(  a\right)  $ and $\widehat{A}$ to denote $\left\{
\widehat{a}:a\in A\right\}  $, trusting the reader to supply the appropriate
meaning from context.

Now $\mathcal{M}B$, being isomorphic to $2\mathcal{K}A$, is an isolated point
frame, so that $\overline{B}$ has a greatest element $b_{0}$ by Proposition
\ref{Prop:6}. We define
\[
\omega A\equiv\left\langle \widehat{A},b_{0}\right\rangle ,
\]
the subtrunc of $B$ generated by $\widehat{A}\cup\left\{  b_{0}\right\}  $.

\begin{theorem}
$\mathbf{W}$ is monoreflective in $\mathbf{AT}$, and $\omega_{A}:A\rightarrow
\omega A$ is a reflector for $A\in\mathbf{AT}$.
\end{theorem}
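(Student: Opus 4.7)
The plan breaks into three tasks: verifying that $\omega A \in \mathbf{W}$, that $\omega_A$ is an injective trunc morphism, and that $\omega_A$ has the required universal property.

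For the first two tasks, $B \equiv \mathcal{R}_{\mathbf{pF}}(2\mathcal{K}A)$ lies in $\mathbf{W}$ by Proposition \ref{Prop:6}, since $2\mathcal{K}A$ is an isolated pointed frame, and $b_0$ is its unit by construction. Because $\omega A = \langle \widehat{A}, b_0 \rangle$ is a subtrunc of $B$ containing $b_0$, the element $b_0$ remains the greatest element of $\overline{\omega A}$, so $\omega A \in \mathbf{W}$ by another application of Proposition \ref{Prop:6}. Injectivity of $\omega_A = \mathcal{R}_{\mathbf{pF}} \nu_{\mathcal{M}A} \circ \mu_A$ follows because $\mu_A$ is a trunc isomorphism (Theorem \ref{Thm:4}) and $\mathcal{R}_{\mathbf{pF}} \nu_{\mathcal{M}A}$ is injective, as $\nu_{\mathcal{M}A}$ is the frame inclusion $2_F \mathcal{K}A \hookrightarrow 2\mathcal{K}A$.

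For the universal property, given $\theta \colon A \to C$ with $C \in \mathbf{W}$, I would first apply Theorem \ref{Thm:3} to $\mu_C \circ \theta \colon A \to \mathcal{R}_0 \mathcal{M}C$, obtaining a unique pointed frame morphism $g \colon \mathcal{M}A \to \mathcal{M}C$ with $\mathcal{R}_0 g \circ \mu_A = \mu_C \circ \theta$. Because $C \in \mathbf{W}$, Proposition \ref{Prop:6} guarantees $\mathcal{M}C$ is isolated, and the universal property of the $\mathbf{ipF}$-reflector $\nu_{\mathcal{M}A} \colon \mathcal{M}A \to 2\mathcal{K}A$ from Corollary \ref{Cor:4} then yields a unique pointed frame morphism $\widetilde{g} \colon 2\mathcal{K}A \to \mathcal{M}C$ with $\widetilde{g} \circ \nu_{\mathcal{M}A} = g$. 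Applying $\mathcal{R}_{\mathbf{pF}}$ produces a trunc morphism $\mathcal{R}_0 \widetilde{g} \colon B \to \mathcal{R}_0 \mathcal{M}C$ satisfying $\mathcal{R}_0 \widetilde{g} \circ \omega_A = \mu_C \circ \theta$, so $\mathcal{R}_0 \widetilde{g}$ at least sends $\widehat{A}$ into $\widehat{C} = \mu_C(C)$.

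The main obstacle is to show that $\mathcal{R}_0 \widetilde{g}(b_0)$ also lands in $\widehat{C}$; once this is in hand, $\mathcal{R}_0 \widetilde{g}$ restricts to a map $\omega A \to \widehat{C}$ and $\overline{\theta} \equiv \mu_C^{-1} \circ \mathcal{R}_0 \widetilde{g}|_{\omega A}$ is the required $\mathbf{W}$-morphism. My plan here is to pass through the equivalence $\mathbf{pF} \leftrightarrows \mathbf{fF}$ of Proposition \ref{Prop:1}: since both $2\mathcal{K}A$ and $\mathcal{M}C$ are fully filtered in the $\mathbf{fF}$-picture, $\widetilde{g}$ translates into a bare frame morphism $\beta \colon \mathcal{K}A \to \mathcal{K}C$, and so acts in the product form $(\varepsilon, K) \mapsto (\varepsilon, \beta(K))$. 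Under the $\mathbf{AT}$-isomorphism $\mathcal{R}_0(2L) \cong \mathcal{R}L$ from the proposition at the end of Section \ref{Sec:1}, $\mathcal{R}_0 \widetilde{g}$ is identified with the $\mathbf{W}$-morphism $\mathcal{R}\beta$, which preserves the unit; hence $\mathcal{R}_0 \widetilde{g}(b_0)$ is the unit of $\mathcal{R}_0 \mathcal{M}C$, which equals $\mu_C(u_C) \in \widehat{C}$ where $u_C$ is the unit of $C$. Uniqueness of $\overline{\theta}$ is then immediate: any $\mathbf{W}$-morphism $\omega A \to C$ extending $\theta$ is forced to send each $\widehat{a}$ to $\theta(a)$ and, by unit preservation, to send $b_0$ to $u_C$; since $\omega A$ is generated as a trunc by $\widehat{A} \cup \{b_0\}$, this pins down the extension.
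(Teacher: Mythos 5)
Your overall route is the paper's own: obtain $g$ from Theorem \ref{Thm:3}, extend it over $\nu_{\mathcal{M}A}$ using the free isolated point frame property, and apply $\mathcal{R}_{\mathbf{pF}}$. (The paper's proof stops at exactly this point, so the remaining work is yours.) Your preliminary observations --- that $b_0$ is the unit of $\omega A$, that $\omega_A$ is an injective trunc morphism, and the uniqueness argument at the end --- are fine. But the step you yourself single out as the main obstacle contains a genuine error.

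A pointed frame morphism $h:2L\rightarrow 2M$ is \emph{not} forced to have the product form $(\varepsilon,a)\longmapsto(\varepsilon,\beta(a))$. Commutation with the first projections only forces $h(\varepsilon,a)=(\varepsilon,h_2(\varepsilon,a))$; writing $c\equiv h_2(\bot,\top_L)$ and $d\equiv h_2(\top,\bot_L)$, preservation of the complementary pair $(\bot,\top_L)$, $(\top,\bot_L)$ gives $c\vee d=\top_M$ and $c\wedge d=\bot_M$, so under Proposition \ref{Prop:1} $h$ corresponds to a pair $(c,f)$ with $f:L\rightarrow{\downarrow}c$; the product form is the special case $c=\top_M$. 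In your situation $c$ is computable: since $(\bot,\top)$ lies in $2_F\mathcal{K}A$, we have $\widetilde{g}(\bot,\top)=g(\bot,\top)$, and the formula for $g$ in the proof of Theorem \ref{Thm:3} gives $g(\bot,\top)=\bigvee_{a\in\overline{A}}\operatorname{coz}\widehat{\theta(a)}=\left(\bot,\left[\theta\left[A\right]\right]\right)$. Thus $c=\left[\theta\left[A\right]\right]$, and $\mathcal{R}_0\widetilde{g}(b_0)$ is the characteristic function of this complemented kernel rather than the unit of $\mathcal{R}_0\mathcal{M}C$, unless the image of $\theta$ generates $C$ as a truncation kernel. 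The paper's own Example \ref{Ex:1} defeats the claim: for $\theta:\mathbb{R}\rightarrow\mathbb{R}^2$, $a\longmapsto(a,0)$, one gets $\left[\theta\left[\mathbb{R}\right]\right]=\mathbb{R}\times 0$, so $\mathcal{R}_0\widetilde{g}(b_0)$ corresponds to $(1,0)$ and your $\overline{\theta}$ sends the unit $b_0$ to $(1,0)\neq(1,1)$; it is a trunc morphism but not a $\mathbf{W}$-morphism. The required $\mathbf{W}$-morphism must instead be defined by sending $b_0$ to $u_C$ outright, and the real content of the theorem --- which neither your argument nor the truncated proof in the paper supplies --- is that this assignment, together with $\widehat{a}\longmapsto\theta(a)$, extends to a well-defined truncation morphism on the subtrunc $\left\langle\widehat{A},b_0\right\rangle$.
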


\begin{proof}
Consider the $\mathbf{AT}$-morphism\textbf{\ }$\theta:A\rightarrow
C\in\mathbf{W}$, so that $\mathcal{M}C=2\mathcal{K}C$ is isolated. Let
$g:2_{F}\mathcal{K}A\rightarrow2\mathcal{K}C$ be the unique pointed frame map
such that $\mathcal{R}_{\mathbf{pF}}g\circ\mu_{A}=\mu_{C}\circ\theta$. Now $g$
extends uniquely over $\nu_{M}$ since the latter is the free isolated point
frame over $2_{F}\mathcal{K}A$, thus providing a unique pointed frame map
$g^{\prime}:2\mathcal{K}A\rightarrow2\mathcal{KC}$ such that $g^{\prime}%
\circ\nu_{M}=g$. Then, as the reader may easily check, $\mathcal{R}%
_{\mathbf{pF}}g^{\prime}:R_{0}\left(  2KA\right)  \longrightarrow R_{0}\left(
2KB\right)  $,
\end{proof}

\end{document}